\newtheorem{theorem}{Theorem}[section]
\newtheorem{lemma}{Lemma}[section]
\newtheorem{corollary}{Corollary}[section]
\newtheorem{proposition}{Proposition}[section]
\newtheorem{remark}{Remark}[section]
\newcommand{\bx}{\mbox{\boldmath $x$}}
\newcommand{\bv}{\mbox{\boldmath $v$}}
\newcommand{\bu}{\mbox{\boldmath $u$}}
\newcommand{\e}{\epsilon}
\newcommand{\Be}{\begin{equation}}
\newcommand{\Ee}{\end{equation}}
\def\bega{\begin{aligned}}
\def\enda{\end{aligned}}
\def\lw{\left}
\def\rw{\right}
\def\al{\alpha}
\def\e{\varepsilon}
\numberwithin{equation}{section}
\begin{document}
	
\title[Comparison between theories of gas mixtures]{On the comparison between phenomenological and kinetic theories of gas mixtures with applications to flocking}

\author{Gi-Chan Bae}
\address{Research institute of Mathematics, Seoul National University, Seoul 08826, Republic of Korea}
\email{gcbae02@snu.ac.kr}

\author{Seung-Yeal Ha}
\address{Department of Mathematical Sciences and Research Institute of Mathematics,
	Seoul National University, Seoul, 08826, Republic of Korea}
\email{syha@snu.ac.kr}

\author{Gyuyoung Hwang}
\address{Department of Mathematical Sciences, Seoul National University, Seoul 08826, Republic of Korea}
 \email{hgy0407@snu.ac.kr}

\author{Tommaso Ruggeri}
\address{Department of Mathematics and Alma Mater Research Center on Applied Mathematics AM$^2$,
	University of Bologna, Italy}
\email{tommaso.ruggeri@unibo.it}

\keywords{Conservation laws, gas mixture, flocking, kinetic theory, phenomenological theory}

\begin{abstract}
We study the compression between the phenomenological and kinetic models for a mixture of gases from the viewpoint of collective dynamics. In the case in which constituents are Eulerian gases, balance equations for mass, momentum, and energy are the same in the main differential part, but production terms due to the interchanges between constituents are different. They coincide only when the thermal and mechanical diffusion are sufficiently small. In this paper, we first verify that both models satisfy the universal requirements of conservation laws of total mass, momentum, and energy, Galilean invariance and entropy principle. Following the work of Ha and Ruggeri (ARMA 2017),  we consider spatially homogeneous models which correspond to the generalizations of the Cucker Smale model with the thermal effect. In these circumstances, we provide analytical results for the comparison between two resulting models and also present several numerical simulations to complement analytical results. 
	\end{abstract}
	
	\maketitle
	
	\tableofcontents
	
 \section{Introduction} \label{sec:1}
 \setcounter{equation}{0}
The mathematical theory of gas mixtures provides both challenging and stimulating problems for researchers in nonlinear sciences. Successful models can be deduced from either the continuum theory of fluids or the kinetic theory in the case of rarefied gases. In both cases, suitable equations can be derived to explain irreversible phenomena such as diffusion, heat transfer, and chemical reactions, etc. We refer to books \cite{Bose, Hutter,Rajagopal,Wilmansky} for the state-of-the-art results.  In particular, for the link between the macroscopic and mesoscopic approaches, we refer to the recent book \cite{BookNew} by Ruggeri and Sugiyama on Rational Extended Thermodynamics (RET).

The Truesdell theory \cite{Truesdell} for homogeneous mixtures within the framework of rational thermodynamics assumes that each component obeys the same balance laws as a single fluid, but there are production terms responsible for the interchange of mass, momentum, and energy between the components. The production terms are determined by universal principles such as Galilean invariance, the entropy principle, and the requirement that the whole mixture is conservative. In the case of a mixture where the individual constituents are Eulerian gases, the phenomenological theory of multi-temperatures yields a hyperbolic symmetric system of balance laws \cite{Simic1}. Recent studies, particularly those concerning shock waves, have focused on the investigation of the differential system \cite{Shighero} (see the book \cite{BookNew} or review papers \cite{Atkin, RuggeriWilma, RuggeriTAM, SimicParma}).

On the other hand, for rarefied gases, vast literature exists on the modeling of mixtures using the variants of the Boltzmann equation for both monatomic and polyatomic gases (see the classical book of Cercignani \cite{cercignani}). In particular, several BGK-type models were proposed for monatomic and polyatomic gases. We refer to the recent review \cite{Pirner} and reference therein. 
 
When individual constituents are Eulerian gases without heat conduction and viscosity, the macroscopic theory equipped with multi-temperatures and the first five moments associated with Boltzmann equations for mixtures yields the same principal part of the differential system, although production terms are different. For the direct comparison between two particle (or microscopic) models based on phenomenological theory and kinetic theory, we need to employ some normalization procedure (see Section \ref{sec:4.1}). To set up the stage, we begin with a brief description of two-particle models for flocking which can be regarded as a generalization of the Cucker-Smale model \cite{CS} for flocking.

We consider a group of $n$ Cucker-Smale particles with internal observables denoted by temperature. Let $\mathbf{x}_\alpha$, $\mathbf{u}_\alpha$, and $T_\alpha$ represent the position, diffusion velocity, and temperature of the $\alpha$-th particle, respectively. For the observable $(\mathbf{x}_\alpha, \mathbf{u}_\alpha, T_\alpha)$, we define the associated energy observable $E_\alpha$ as follows (assuming the internal energy is a linear function of temperature, specific heat, and constant density, equal to one for each constituent):
\[ E_\alpha := \frac{1}{2} |\bu_\alpha|^2 + T_\alpha, \quad \alpha \in [n] := \{1, \ldots, n \}. \]
Then, the phenomenological theory based Cucker-Smale (PB-CS) model reads as follows. 
\begin{equation} \label{A-2}
\begin{cases}
\displaystyle \frac{d\bx_{\al}}{dt} = \bu_{\al}, \quad \alpha \in [n], \\
\displaystyle \frac{d\bu_{\al}}{dt} = \frac{1}{n}\sum_{\beta=1}^n \phi_{\alpha \beta} \left(\frac{\bu_{\beta}}{T_\beta}-\frac{\bu_{\al}}{T_\al}\right), \\
\displaystyle \frac{d E_\alpha}{dt}  = \frac{1}{n}\sum_{\beta=1}^n \zeta_{\alpha \beta}\left(\frac{1}{T_\al} -\frac{1}{T_\beta} \right).
\end{cases}
\end{equation}
On the other hand, the kinetic theory (in the case of homogeneous solutions) based model for the Cucker-Smale (KB-CS) model also reads as follows.
\begin{equation}  \label{A-1}
\begin{cases}
\displaystyle \frac{d\bx_{\al}}{dt} = \bu_{\al}, \quad \alpha \in [n], \\
\displaystyle \frac{d\bu_{\al}}{dt} = \frac{1}{n}\sum_{\beta=1}^n a_{\alpha \beta}\left(\bu_{\beta}-\bu_{\al}\right),  \\
\displaystyle \frac{d E_\alpha}{dt} = \frac{1}{n}\sum_{\beta=1}^n a_{\alpha \beta}  (E_\beta - E_\alpha). 
\end{cases}
\end{equation}
Note that the PB-CS model \eqref{A-2} has been extensively studied from various angles in literature, e.g., emergent dynamics \cite{H-K-R, Ha_Ruggeri}, its hydrodynamic description \cite{HKMRZ2, KHKS}, mean-field limit \cite{HKMRZ1} and continuum limit \cite{TK} etc.  Thus, we mainly focus on the KB-CS model \eqref{A-1}. In contrast, as far as the authors know, the KB-CS model has not been considered in the literature. In this paper, we are interested in the following set of questions:
\begin{itemize}
\item
(Q1):~Can the proposed KB-CS model \eqref{A-1} exhibit collective dynamics?
\item
(Q2):~If so, what will be the quantitative similarities and differences between two models  \eqref{A-1} and \eqref{A-2}?
\end{itemize}
The main purpose of this paper is to address the above two questions by providing several qualitative and quantitative estimates. More precisely, our main results can be briefly summarized as follows. 

First, we identify different expressions for production terms in the phenomenological model and kinetic theory model for gas mixtures (see Section \ref{sec:2.2} and Section \ref{sec:3.1}). 

Second, we demonstrate that both models satisfy universal principles. In particular, we show that the system derived from the kinetic theory satisfies the entropy principle. This is not evident at first glance, since although the original kinetic model satisfies an $H$-theorem,  its reduced model using only a finite number of moments may not satisfy an entropy principle. This is typical in Rational Extended Thermodynamics (see \cite{BookNew})  in which we need to verify whether the reduced model satisfies an entropy principle or not  (see Theorem \ref{T3.1}).  

Third, we show that system \eqref{A-1} exhibits asymptotic flocking dynamics for all initial data (see Theorem \ref{T5.1}). This is an apparent difference from the PB-CS model in which flocking behaviors can be observed for well-prepared initial data (see Theorem\ref{T5.2}). 

Finally, we compare the similarity and discrepancy between the two particle models. For this comparison, we begin with an important observation by Ha and Ruggeri \cite{Ha_Ruggeri}, who noticed that in the isothermal case, homogeneous solutions of the phenomenological theory of mixtures coincide with that of the Cucker-Smale model for flocking \cite{CS}. Utilizing this analogy, they proposed a thermo-mechanical counterpart  (which is called the TCS model) of the Cucker-Smale model that contains energy equations, when the system has different internal energy of each constituent (temperatures). This provides a boost to a series of studies in both classical and
relativistic frameworks \cite{C-H-J-K2, C-H-J-K1, HLKR, H-K-R,HKMRZ1,HKMRZ2,TK, KHKS}. In Section \ref{sec:5}, we study the TCS model when production terms are derived from the kinetic theory and compare them with previous results obtained from the macroscopic phenomenological theory. This comparison is also interesting at the level of mixtures for homogeneous solutions. \newline

The rest of this paper is organized as follows. In Section \ref{sec:2}, we recall three universal principles, production terms, and a phenomenological theory-based Cucker-Smale model. In Section \ref{sec:3}, we present the parallel description of a kinetic theory-based approach for mixtures, and we present production terms and a heuristic derivation of the kinetic theory-based Cucker-Smale model. In Section \ref{sec:4}, we provide two normalized particle models which can be derived using two approaches such as the phenomenological theory and kinetic theory. In Section \ref{sec:5}, we study asymptotic equivalence of two-particle models in Section \ref{sec:4} in near equilibrium regime, and we also discuss discrepancy for the proposed models. In Section \ref{sec:6}, we provide several numerical examples and compare them with analytical results in previous sections. Finally, Section \ref{sec:7} is devoted to a summary and some remaining issues for future work. 
%
%
%
\section{A phenomenological theory of Mixtures}  \label{sec:2}
\setcounter{equation}{0}
In this section, we study a phenomenological theory for gas mixture following the presentation in \cite{Ha_Ruggeri}.  In the context of rational thermodynamics,  the dynamic description of a mixture of $n$ component gases is based on the postulate that each component obeys the same balance laws that a single fluid obeys \cite{RET,BookNew, Truesdell}.  \newline
	
	 Let $\rho _{\alpha }, \bv_\alpha$  and $\varepsilon_\alpha$ denote the mass density,  the velocity and the specific internal energy of component $\alpha$, respectively and the quantities $\mathbf{q}_{\alpha}$ and $\mathbf{t}_{\alpha }$ are the heat flux, and the stress tensor. Then, governing balance laws are given by the differential relations for local densities of masses, momenta, and energies:
	\begin{equation}
	\begin{cases}  \label{RT_model}
\vspace{0.2cm}
	& \displaystyle \frac{\partial \rho _{\alpha }}{\partial t}+\mathrm{div}
	\,(\rho _{\alpha }\bv_{\alpha })=\tau _{\alpha }, \qquad    \alpha \in [n] := \{ 1,2,\ldots ,n \}, \\
\vspace{0.2cm}
	& \displaystyle \frac{\partial (\rho _{\alpha }\bv_{\alpha })}{%
		\partial t}+\mathrm{div}\,(\rho _{\alpha }\bv_{\alpha }\otimes
	\bv_{\alpha }-\mathbf{t}_{\alpha })=\mathbf{M}_{\alpha
	}, \\
	& \displaystyle \frac{\partial \left( \frac{1}{2}\rho _{\alpha }v_{\alpha
		}^{2}+\rho _{\alpha }\varepsilon _{\alpha }\right) }{\partial t}  \\
	& \displaystyle \hspace{1cm}  +~ \mathrm{div}
	\,\left\{ \left( \frac{1}{2}\rho _{\alpha }v_{\alpha }^{2}+\rho _{\alpha
	}\varepsilon _{\alpha }\right) \bv_{\alpha }-\mathbf{t}_{\alpha }%
	\bv_{\alpha }+\mathbf{q}_{\alpha }\right\} =e_{\alpha }, 
	\end{cases}
	\end{equation}%
	where $v_\alpha = |\bv_\alpha|$, $\tau _{\alpha }$, $\mathbf{M}_{\alpha }$, and $%
	e_{\alpha }$ represent production terms due to the interchanges 
	between components of mass, momentum, and energy.  Note that the stress tensor $\mathbf{t}_{\alpha }$ can be decomposed into the pressure part $-p_{\alpha }\mathbf{I}$
	and the viscous part ${\bm \sigma}_{\alpha }$:
	\begin{equation} \label{New-0}
	\mathbf{t}_{\alpha }=-p_{\alpha }\mathbf{I}+{\bm {\sigma}}_{\alpha}.
	\end{equation}
System \eqref{RT_model} corresponds to the particular case to the general system of balance laws:
	\begin{equation} \label{New-1}
\frac{\partial {\bf F}}{\partial t} + \frac{\partial {\bf F}^i}{\partial x_i} = {\bf f},
	\end{equation}
	where ${\bf F}, {\bf F}^i$ and ${\bf f}$ denote the local density, flux, and production terms, respectively. 
	\subsection{Universal principles} \label{sec:2.1}
	In this subsection, we present three universal principles to be used in the identification of production terms in the sequel. \newline
	\begin{enumerate}
	\item 
	$({\mathcal P}1)$ (Global conservation laws):~We assume that the total sum of production terms is zero:
	\begin{equation} \label{New-1-1}
		\sum_{\alpha =1}^{n}\tau _{\alpha }=0,\quad \sum_{\alpha =1}^{n}\mathbf{M}
		_{\alpha }=\mathbf{0},\quad \sum_{\alpha =1}^{n}e_{\alpha }=0.
	\end{equation}
	\vspace{0.2cm}
		\item 
		$({\mathcal P}2)$ (Galilean invariance):~For thermo-mechanical observables $\{ (\rho_\alpha, \bv_\alpha, e_\alpha) \}$,  we define the center of mass, bulk velocity and diffusion velocities as 
		\[
		\rho := \sum_{\alpha=1}^{n} \rho_\alpha, \quad \bv := \frac{1}{\rho} \sum_{\alpha =1}^{n} \rho_{\alpha}  \bv_\alpha, \quad \bu_\alpha :=\bv_\alpha - \bv.
		\]
		Then the Galilean invariance require that:
	       \begin{equation} \label{ProdTransf}
	       \tau_{\alpha} =\hat{\tau}_{\alpha}, \quad {\bf M}_\alpha =\hat{\tau}_{\alpha}\bv +\hat{{\bf M}}_{\alpha}, \quad e_{\alpha} =\hat{\tau}_{\alpha}\frac{|\bv|^{2}}{2}+\hat{{\bf M}}_{\alpha} \cdot \bv +\hat{e}_\alpha,
		\end{equation}
		where the hat quantities $(\hat{\tau}_{\alpha}, \hat{{\bf M}}_{\alpha}, \hat{e})$ are independent of bulk velocity $\bv$, and they depend on objective variables and on $\bu_\alpha$ that are frame independent diffusion velocities. For the proof see \cite{Simic1,BookNew}.

\vspace{0.2cm}

\item 
$({\mathcal P}3)$ (Entropy Principle):~We assume that for each $\alpha \in [n]$, there exists an entropy density $S_\alpha$ and supplementary scalar differential law:
\[
\frac{\partial \rho_\alpha  S_\alpha}{\partial t}+ \frac{\partial {\bm \Phi}_\alpha}{\partial {\bf x}} = \Sigma_\alpha,
\]
such that any solutions to system \eqref{RT_model} have a global entropy law with non-negative production:
\begin{equation}\label{EP}
\frac{\partial \rho   S}{\partial t}+ \frac{\partial {\bm \Phi}}{\partial {\bf x}} = \Sigma \geq 0,
\end{equation}
where  $\rho S, {\bf \Phi}$ and $\Sigma$ are given by the following relations:
\begin{equation}\label{GlobalS}
\rho S := \sum_{\alpha=1}^{n} \rho_\alpha S_\alpha, \qquad {\bm \Phi} :=\sum_{\alpha=1}^{n} {\bm \Phi}_\alpha, \qquad \Sigma :=\sum_{\alpha=1}^{n}  \Sigma_\alpha.
\end{equation}
\end{enumerate}

\vspace{0.5cm}

In the following subsections, we identify, in the case that each constituent is the Eulerian gas, the production terms based on three principles $({\mathcal P}1)$,  $({\mathcal P}2)$ and  $({\mathcal P}3)$.
\subsection{Production terms} \label{sec:2.2}
Consider a mixture in which each constituent is an Euler fluid  with
\[  {\bf q}_\alpha =0, \quad  {\bm \sigma}_\alpha=0,  \quad \forall~~\alpha \in [n]. \]
In this case, system \eqref{RT_model} becomes 
	\begin{equation} \label{Eulero}
	\begin{cases}
\vspace{0.2cm}
	& \displaystyle \frac{\partial \rho _{\alpha }}{\partial t}+\mathrm{div}
	\,(\rho _{\alpha }\bv_{\alpha })=\tau _{\alpha },  \quad \alpha \in [n],\\
\vspace{0.2cm}
	& \displaystyle\frac{\partial (\rho _{\alpha }\bv_{\alpha })}{
		\partial t}+\mathrm{div}\,(\rho _{\alpha }\bv_{\alpha }\otimes
	\bv_{\alpha }+p_\alpha {\bf I})=\mathbf{M}_{\alpha}, \\
	& \displaystyle\frac{\partial \left( \frac{1}{2}\rho _{\alpha }v_{\alpha
		}^{2}+\rho _{\alpha }\varepsilon _{\alpha }\right) }{\partial t}  +\mathrm{div}%
	\,\left\{ \left( \frac{1}{2}\rho _{\alpha }v_{\alpha }^{2}+\rho _{\alpha
	}\varepsilon _{\alpha } + p_\alpha\right) \bv_{\alpha } \right\} =e_{\alpha }, 
	\end{cases}
	\end{equation}
supplemented with thermal and caloric constitutive relations of state:
\begin{equation}\label{constitutive}
    p_\alpha \equiv p_\alpha(\rho_\al,T_\al), \quad \varepsilon_\al \equiv \varepsilon_\al (\rho_\al,T_\al).
\end{equation}
For system  \eqref{Eulero} - \eqref{constitutive},  entropy principle requires the existence of a main field $\mathbf{u} ^\prime \equiv \left(\Lambda^{\rho_{\alpha}},\boldsymbol{\Lambda}^{\bv_{\alpha}},\Lambda^{\varepsilon_{\alpha}} \right)$ such that
\begin{equation*}
    d(\rho_\alpha S_\alpha)=  \mathbf{u} ^\prime \cdot d{\bf F}={\Lambda}^{\rho_{\alpha}%
}d\rho_{\alpha}+{\boldsymbol{\Lambda}}^{\bv_{\alpha}}%
d(\rho_{\alpha}\bv_{\alpha})+{\Lambda}^{\varepsilon_{\alpha}%
}d\left(  \frac{1}{2}\rho_{\alpha}v_{\alpha}^{2}+\rho_{\alpha}\varepsilon
_{\alpha}\right),
\end{equation*}
where ${\bf F} = (\rho_\alpha, \rho_{\alpha}\bv_{\alpha},  \frac{1}{2}\rho_{\alpha}v_{\alpha}^{2}+\rho_{\alpha}\varepsilon
_{\alpha}).$ We refer to \cite{RS,BookNew} for details.   \newline

Note that the above relation and \eqref{GlobalS} imply
\begin{equation}
d(\rho S)=  
\sum_{\alpha=1}^{n}\left\{  {\Lambda}^{\rho_{\alpha}%
}d\rho_{\alpha}+{\boldsymbol{\Lambda}}^{\bv_{\alpha}}%
d(\rho_{\alpha}\bv_{\alpha})+{\Lambda}^{\varepsilon_{\alpha}%
}d\left(  \frac{1}{2}\rho_{\alpha}v_{\alpha}^{2}+\rho_{\alpha}\varepsilon
_{\alpha}\right)  \right\}. \label{MF_entr1}%
\end{equation}
In fact, the main field components were already computed in \cite{Simic1}:
\begin{equation} \label{MF_model1}
{\Lambda}^{\rho_{\alpha}}=\frac{-g_{\alpha}+\frac{1}{2}v_{\alpha}^{2}%
}{T_{\alpha}}, \quad{\boldsymbol{\Lambda}}^{\bv_{\alpha}}%
=-\frac{\bv_{\alpha}}{T_{\alpha}}, \quad{\Lambda}^{\varepsilon_{\alpha}}=\frac{1}{T_{\alpha}},
\end{equation}
where
\[
g_\alpha = \varepsilon_{\alpha } +\frac{p_\alpha}{\rho_\alpha} - T_\alpha S_\alpha, \quad  \text{is the chemical potential.}
\]
Moreover, we use the results in  \cite{Galileo,Simic1,BookNew} and \eqref{New-1} to see
\begin{equation} \label{New-2}
\Sigma_\alpha=\mathbf{u} ^\prime \cdot \mathbf{f}= \hat{\mathbf{u} }^{\prime}\cdot\hat{\mathbf{f}}=\hat{\Sigma}_\alpha =\hat{\Lambda}^{\rho_{\alpha}}\hat{\tau}_{\alpha}+\hat{\boldsymbol{\Lambda}}%
^{\bv_{\alpha}}\cdot\hat{\mathbf{M}}_{\alpha}+\hat{\Lambda}^{\varepsilon_{\alpha}}
\hat{e}_{\alpha}.
\end{equation}
Now, we use $\eqref{GlobalS}_3$, \eqref{New-2} and \eqref{EP} to find 
\begin{align}
\begin{aligned} \label{New-3}
\Sigma &=  \sum_{\beta=1}^{n}\left(
\hat{\Lambda}^{\rho_{\beta}}\hat{\tau}_{\beta}+\hat{\boldsymbol{\Lambda}}%
^{\bv_{\beta}}\cdot\hat{\mathbf{M}}_{\beta}+\hat{\Lambda}^{\varepsilon_{\beta}}%
\hat{e}_{\beta}\right) \\
&= \sum_{\beta=1}^{n}\left(
\frac{-g_{\beta}+\frac{1}{2}u_{\beta}^{2}%
}{T_{\beta}}\hat{\tau}_{\beta}- 
 \frac{{\bu _{\beta}}}{T_\beta}\cdot\hat{\mathbf{M}}_{\beta}+\frac{1}{T_\beta}%
\hat{e}_{\beta}\right) \geq 0.
\end{aligned}
\end{align}
On the other hand, it follows from \eqref{New-1-1} and \eqref{ProdTransf} that 
\begin{equation}  \label{New-4}
{\hat \tau}_n = -\sum_{\beta =1}^{n-1} {\hat \tau} _{\beta },\quad  \mathbf{{\hat M}}_{n} = -\sum_{\alpha =1}^{n-1}\mathbf{{\hat M}}_{\alpha },\quad    {\hat e}_{n} = - \sum_{\alpha =1}^{n-1} {\hat e}_{\alpha}.
\end{equation}
Finally, we use \eqref{New-3} and \eqref{New-4} to obtain
\begin{align}
\begin{aligned} \label{ResIneqNmix}
\Sigma &=  \sum_{\beta=1}^{n-1}\left(
\frac{-g_{\beta}+\frac{1}{2}u_{\beta}^{2}}{T_{\beta}}\hat{\tau}_{\beta}-\frac{{\bu_{\beta}}}{T_\beta}\cdot\hat{\mathbf{M}}_{\beta}+\frac{1}{T_\beta}\hat{e}_{\beta}\right) \cr 
&\qquad + 
\frac{-g_{n}+\frac{1}{2}u_{n}^{2}}{T_{n}}\hat{\tau}_{n}- \frac{{\bu_{n}}}{T_n}\cdot\hat{\mathbf{M}}_{n}+\frac{1}{T_n}\hat{e}_{n}  \\
&= \sum_{b=1}^{n-1}\bigg\{\left( - \frac{g_b-\frac{1}{2}u_{b}^{2}}{T_{b}}+\frac{g_n-\frac{1}{2}u_{n}^{2}}{T_{n}}\right)  \hat{\tau}_{b} \cr 
&\qquad +\left(\frac{\bu _{n}}{T_{n}}- \frac{\bu _{b}}{T_{b}}\right) \cdot
\hat{\mathbf{M}}_{b}+\left(\frac{1}{T_{b}}-\frac{1}{T_{n}}\right)  \hat{e}_{b}\bigg\}\geq0. 
\end{aligned}
\end{align}
	
%
\noindent Next, we consider an {\it inert mixture} with zero chemical reactions:
\[  \tau_\alpha =0. \]
In this case, \eqref{New-1-1}, \eqref{ProdTransf}  and \eqref{New-3} become
\begin{align}
\begin{aligned} \label{New-5}
& {\bf M}_\alpha   = \hat{{\bf M}}_{\alpha}, \quad e_{\alpha}    =\hat{{\bf M}}_{\alpha} \cdot \bv +\hat{e}_{\alpha}, \\
&  \sum_{\alpha =1}^{n}\mathbf{\hat{M}}_{\alpha }=\mathbf{0},  \quad \sum_{\alpha =1}^{n}\hat{e}_{\alpha }=0,\\
&  \Sigma=\sum_{\beta=1}^{n}\left(
 - \frac{{\bu _{\beta}}}{T_\beta}\cdot\hat{\mathbf{M}}_{\beta}+\frac{1}{T_\beta}%
\hat{e}_{\beta}\right) \geq 0.
\end{aligned} 
\end{align} 
In particular, we can rewrite the last relation $\eqref{New-5}_5$  using \eqref{ResIneqNmix} as 
\begin{equation} \label{New-6}
\Sigma = \sum_{b=1}^{n-1}\left\{ \left(
\frac{\bu _{n}}{T_{n}}- \frac{\bu _{b}}{T_{b}}\right)  \cdot
\hat{\mathbf{M}}_{b}+\left(  \frac{1}{T_{b}}-\frac{1}{T_{n}}\right)  \hat
{e}_{b}\right\}\geq 0.
\end{equation}
Note that the crucial point for a possible difference between different models lies in the different choices of $\hat{\mathbf{M}}_\alpha$ and $\hat{e}_\alpha$ in such a way that the entropy inequality $\eqref{New-5}_5$ or equivalently \eqref{New-6} holds. 
 
\subsection{The PB-CS model} \label{sec:2.3} 
In this subsection, we discuss a formal derivation of a flocking model for thermo-mechanical  Cucker-Smale particles. Using a phenomenological theory discussed in Section \ref{sec:2.2}, we construct $(\hat{\mathbf{M}}_{b}, \hat{e}_b)$ such that the global entropy inequality \eqref{New-6} holds as a quadratic form which is typical of irreversible thermodynamics \cite{Simic1}:
	\begin{equation}\label{Mbeb}
	\hat{\mathbf{M}}_{b}=\frac{1}{n}\sum_{a=1}^{n-1}\psi _{a b}\left( \frac{\bu _{n}%
}{T_{n}}-\frac{\bu _{a}}{T_{a}}\right),~~
\hat{e}_{b}=\frac{1}{n}\sum_{a=1}^{n-1}\theta _{a b}\left(\frac{1}{T_{a}}-\frac{1}{%
	T_{n}}\right),~~b \in [n-1]. 
\end{equation}
If  $(\psi _{a b})$ and $(\theta _{a b})$ are symmetric and positive definite, the relations \eqref{New-6} and \eqref{Mbeb} yield that $\Sigma$ is positive in non equilibrium:
\begin{equation}
\Sigma = \sum_{a,b=1}^{n-1}  \left\{\psi _{a b}\left( \frac{\bu _{n}%
}{T_{n}}-\frac{\bu _{a}}{T_{a}}\right) \left(
\frac{\bu _{n}}{T_{n}}- \frac{\bu _{b}}{T_{b}}\right)  +\theta _{a b}\left(\frac{1}{T_{a}}-\frac{1}{%
	T_{n}}\right)\left(  \frac{1}{T_{b}}-\frac{1}{T_{n}}\right)  \right\}> 0.\label{entropiaxy}
	 \end{equation}
In \cite{Ha_Ruggeri}, the authors transform $(n-1) \times (n-1)$ matrices $(\psi_{ij})$ and $(\theta_{ij})$ into  $n \times n$ matrices $(\phi_{\alpha\beta})$ and $(\zeta_{\alpha \beta})$: 
\begin{align}
	\begin{aligned} \label{psi-phi}
		&\phi_{ij} :=-\psi_{ij}, \quad \forall \,  i\neq j \in [n-1], \\
		& \phi_{in} :=\phi_{ni}=\sum_{j=1}^{n-1}                    \psi_{ij}, \quad \forall \,  i \in [n-1], \\
		&\phi_{\alpha \alpha} := \text{arbitrary  value}~\forall~ \alpha \in [n].
	\end{aligned}
\end{align}
Similarly, we can define $(\zeta_{\alpha\beta})$ from $(\theta_{\alpha \beta})$. Moreover, if all components of $(\phi _{\alpha\beta})$ and $(\zeta _{\alpha\beta})$ are positive, then the matrices  $(\psi_{ij})$ and $(\theta_{ij})$  are  positive definite (see \cite{Ha_Ruggeri}). Then, the relations  \eqref{Mbeb} can be rewritten in terms of the new matrices $(\phi_{\alpha \beta})$ and $(\zeta_{\alpha \beta})$:
	\begin{equation}\label{prodmin}
\hat{\mathbf{M}}_{\alpha}=\frac{1}{n}\sum_{\beta=1}^{n}\phi _{\alpha\beta}\left( \frac{\bu _{\beta}%
	}{T_{\beta}}-\frac{\bu _{\alpha}}{T_{\alpha}}\right),  \quad
	 \hat{e}_{\alpha}=\frac{1}{n}\sum_{\beta=1}^{n}\zeta _{\alpha\beta}\left(\frac{1}{T_{\alpha}}-\frac{1}{%
		T_{\beta}}\right).
	\end{equation}
 Conversely, one may return from \eqref{prodmin} to  \eqref{Mbeb} via the inverse transformation \cite{Ha_Ruggeri} as well:
\begin{align} \label{phi-psi}
\begin{split}
    &\psi_{ij}=-\phi_{ij}, \quad \forall \,  i\neq j \in [n-1], \\
&\psi_{ii}=\sum_{\beta\neq i=1}^{n}\phi_{i\beta},\,\,\, \forall \,  i \in [n-1].
\end{split}
\end{align}
 We can also express $\Sigma$ using the new matrices $(\phi_{\alpha \beta})$ and $(\zeta_{\alpha \beta})$. \newline
 
 On the other hand, it follows from $\eqref{New-5}_5$ and \eqref{prodmin} that 
\[
	\Sigma=\frac{1}{n}\sum_{\alpha,\beta=1}^{n}\phi_{\alpha\beta} \left\{
	- \frac{{\bu _{\alpha}}}{T_\alpha}\cdot\left( \frac{\bu _{\beta}%
	}{T_{\beta}}-\frac{\bu _{\alpha}}{T_{\alpha}}\right)\right\}+\frac{1}{n}\sum_{\alpha,\beta=1}^{n} \frac{\zeta_{\alpha\beta}}{T_\alpha} \left( \frac{1%
}{T_{\alpha}}-\frac{1}{T_{\beta}}\right).
\]
Again, we use the index exchange transformation $\alpha~\leftrightarrow~\beta$ and  the symmetries of $\phi_{\alpha \beta}$ and $\zeta_{\alpha \beta}$ to find 
	\begin{align}\label{Sigma2}
		\Sigma=\frac{1}{2n}\sum_{\alpha,\beta=1}^{n}\phi_{\alpha\beta}   
		\left( \frac{\bu _{\beta}%
		}{T_{\beta}}-\frac{\bu _{\alpha}}{T_{\alpha}}\right)^2+\frac{1}{2n}\sum_{\alpha,\beta=1}^{n}\zeta_{\alpha\beta}   \left( \frac{1%
		}{T_{\alpha}}-\frac{1}{T_{\beta}}\right)^2 >0.
	\end{align}
 It is simple to verify that the expressions \eqref{entropiaxy} and \eqref{Sigma2} coincide by the relations \eqref{psi-phi} and \eqref{phi-psi}. In what follows, we assume that all components  of $(\phi_{\alpha\beta}) $ and $(\zeta_{\alpha\beta})$ are positive and therefore the matrices $(\psi_{ij})$ and $(\theta_{i,j})$ are symmetric and positive definite \cite{Ha_Ruggeri}. 

Note that the converse relation is not valid, in general, i.e. even if the matrices $(\psi_{ij})$ and $(\theta_{ij})$ are symmetric and positive definite, the symmetric matrices $(\phi_{\alpha\beta})$ and $(\zeta_{\alpha\beta})$ may not have all positive components. The entropy production \eqref{entropiaxy} or equivalently \eqref{Sigma2} is zero if and only if the mixture lies in an equilibrium state:
\[ \bu_\alpha={\bf 0} \quad \mbox{and} \quad  T_\alpha = T_0, \quad \forall~\alpha \in [n]. \]
System \eqref{Eulero} equipped with \eqref{constitutive} and \eqref{prodmin}  becomes a symmetric hyperbolic one by choosing field variables as the main field \eqref{MF_model1} \cite{Simic1}.  We set $S_\alpha$ and ${\bm \Phi}_\al$ to satisfy the Gibbs equation:
\begin{equation}\label{Gibbs}
    T_\alpha dS_\alpha = d\varepsilon_\al -\frac{p_\al}{\rho_\al^2} d\rho_\al, \quad   {\bm \Phi}_\al = \rho_\al S_\al \bv_\al.
\end{equation}
Entropy production $\Sigma$ is given by \eqref{entropiaxy}. This result follows from the symmetrization procedure by Ruggeri and Strumia \cite{RS}, when a hyperbolic system has an entropy law with a convex entropy. The consequence of having a symmetric system is to guarantee of a local well-posedness of the Cauchy problem. Moreover, so-called the K-condition \cite{Kawa} was satisfied, and therefore global smooth solutions exist for sufficiently small initial data \cite{BookNew}. 

Now, we assume the spatial homogeneity of observables and combine \eqref{Eulero} and \eqref{prodmin} to write down a phenomenological theory-based model for thermo-mechanical CS particles:
\begin{equation}
\begin{cases} \label{TCS-1}
\displaystyle \frac{d\mathbf{x}_\alpha}{dt} = \mathbf{v}_\alpha, \quad \alpha \in [n], \\
\displaystyle \frac{d\mathbf{v}_\alpha}{dt} = \frac{\kappa_1}{n} \sum_{\beta=1}^{n} \phi_{\alpha \beta} \Big( \frac{\mathbf{v}_\beta - \mathbf{v}}{T_\beta} -  \frac{\mathbf{v}_\alpha - \mathbf{v}}{T_\alpha} \Big), \\
\displaystyle \frac{d}{dt}\left(T_\alpha + {{\frac{1}{2}  {\mathbf{v}}_\alpha^2}}\right) = \frac{\kappa_2}{n}  \sum_{\beta=1}^{n} \zeta_{\alpha \beta} \Big( \frac{1}{T_\alpha} - \frac{1}{T_\beta}   \Big) \\
\displaystyle \hspace{3cm} +  \frac{\kappa_1}{n} \sum_{\beta=1}^{n} \phi_{\alpha \beta} \Big( \frac{\mathbf{v}_\beta - \mathbf{v}}{T_\beta} -  \frac{\mathbf{v}_\alpha - \mathbf{v}}{T_\alpha} \Big) \cdot \mathbf{v}.      
\end{cases}
\end{equation}
\vspace{0.2cm}

Before we close this section, we briefly comment on the advantages and disadvantages of the phenomenological theory of gas mixture  as follows: \newline
	\begin{itemize}
		\item \textbf{Advantage}: The validity is for any mixture of gases: rarefied polytropic in which internal energy $\varepsilon_\alpha$ is linear in temperature, rarefied non-polytropic gases in which  $\varepsilon_\alpha$ is non-linear in temperature, dense gases in which $\varepsilon_\alpha$ depending on not only temperature but also the density. 
		\vspace{0.2cm}
		
			\item \textbf{Disadvantage}: The matrices $(\phi _{\alpha\beta})$ and $(\zeta _{\alpha\beta})$  are not determined a priori, and in principle, the expression \eqref{prodmin} that render a quadratic form for \eqref{New-5} is not the only possible choice for inequality \eqref{New-5}.
	\end{itemize}

\section{A kinetic theory of Mixtures} \label{sec:3}
\setcounter{equation}{0}
In this section, we discuss a kinetic theory of mixtures which is parallel to the presentations in the previous section. \newline 

Let $f_\alpha\equiv f_\alpha({\bf x},{\bm \xi}, t)$ be the velocity distribution function of species $\alpha$ and ${\bm \xi}\equiv(\xi_i)$ is the molecular (or microscopic) velocity. Then, the starting point is the system of the Boltzmann equations for a gas mixture:
	\begin{equation}\label{falpfa}
	\frac{\partial f_\alpha}{\partial t} +\sum_{i=1}^n\xi_i \frac{\partial f_\alpha}{\partial x_i} = Q_\alpha :=\sum_{\beta =1}^n Q_{\alpha\beta}( f_\alpha, f_{\beta}), \quad \alpha \in [n],
	\end{equation}
or equivalently, 
\begin{align*}
& \partial_t f_1 + {\bm \xi} \cdot \nabla_{\bf x} f_1 = Q_{11}(f_1,f_1) + Q_{12}(f_1,f_2) +\cdots + Q_{1n}(f_1,f_n), \cr
& \vdots \hspace{3cm} \vdots \hspace{3cm} \vdots \cr
& \partial_t f_n + {\bm \xi} \cdot \nabla_{\bf x} f_n = Q_{n1}(f_n,f_1) + Q_{n2}(f_n,f_2) +\cdots + Q_{nn}(f_n,f_n),
\end{align*}
where  $Q_{\alpha\beta}$ is the Boltzmann collision operator satisfying the following relations:
\[
\sum_{\al=1}^{n} \int_{ \mathbb{R}^3} Q_{\al} \left(
\begin{array}{c}
1\\ {\bm \xi}\\ {\bm \xi^2}
\end{array}
\right)
d{\bm \xi}  = 0, \qquad \sum_{\al=1}^{n} \int_{\mathbb{R}^3} Q_{\al} \ln f_{\al} d{\bm \xi}  \leq 0.
\]
Different types of collisional operators were also discussed in the literature for monatomic and polyatomic gases. For the variants of BGK-type models, we refer to  Pirner review \cite{Pirner}. 
In what follows, we consider the simple BGK-type model introduced by Andries, Aoki, and Perthame \cite{AAP} with the collision operator:
\[
Q_\alpha = \nu_\alpha (f^M_\alpha - f_\alpha), \quad \alpha \in [n],
\]
where $\nu_\alpha$ is a collision frequency and $f^M_\alpha$ is a suitable Maxwellian whose explicit form is not needed at this point. \newline

\noindent Next, we set velocity moments up to second order:
\begin{align*}
& \rho_{\al}=m_{\al}\int_{\mathbb{R}^3}f_{\al}d{\bm \xi}, \quad \rho_{\al}\bv_{\al}=m_{\al}\int_{\mathbb{R}^3}{\bm \xi}  f_{\al}d{\bm \xi} , \cr
&E_{\al}=\frac{1}{2}\rho_{\al}|\bv_{\al}|^2+\rho_\alpha \varepsilon_{\al}=
\frac{m_{\al}}{2}\int_{\mathbb{R}^3}|{\bm \xi}|^2f_{\al}d{\bm \xi},
\end{align*}
where $\varepsilon_{\al}$ is the internal energy whose explicit form takes the following form: 
\begin{equation} \label{New-7}
\varepsilon_{\al}=\frac{m_{\al}}{2 \rho_\alpha }\int_{\mathbb{R}^3}|{\bm \xi-\bv_{\al}}|^2f_{\al}d{\bm \xi}.
\end{equation}
For a monatomic gas, internal energy $\varepsilon_\alpha$ is given as follows \cite{AAP}:
\begin{equation}\label{emono}
\varepsilon_{\alpha } = \frac{3}{2}\frac{k_B}{m_\alpha}T_\alpha,
\end{equation}
where $k_B$ is the Boltzmann constant.
\subsection{Production terms} \label{sec:3.1}
In this subsection, we study the production terms using the kinetic theory of mixtures. In \cite{AAP} (see also \cite{BMS}), the authors considering the first 
five moments for Eulerian gases obtained the same left-hand side as in the phenomenological system \eqref{RT_model}, whereas the production terms on the right-hand side are given explicitly in terms of macroscopic observables:

\begin{align}
\begin{aligned} \label{NNN-1}
\tau_\alpha & =   \sum_{\beta=1}^n\int_{\mathbb{R}^3}  m_\alpha   Q_{\alpha \beta}   d{\bm \xi}=0, \\
  {\bf M}_\alpha &=  \sum_{\beta=1}^n\int_{\mathbb{R}^3} m_\alpha  {\bm \xi}  Q_{\alpha \beta}   d{\bm \xi} = 
 \sum_{\beta=1}^n\int_{\mathbb{R}^3} m_\alpha  ({\bm \xi} -\bv_\alpha)  Q_{\alpha \beta}   d{\bm \xi} \\
 &= \sum_{\beta=1}^n\frac{2\rho_\alpha \rho_\beta }{m_\alpha +m_\beta } \chi_{\alpha \beta}     (\bv_\beta -\bv_\alpha ), \\
 {e}_\alpha &=   \sum_{\beta=1}^n\int_{\mathbb{R}^3} \frac{m_\alpha }{2}|{\bm \xi}|^2Q_{\alpha \beta}   d{\bm \xi} = 
\sum_{\beta=1}^n\int_{\mathbb{R}^3} \frac{m_\alpha }{2}|{\bm \xi} -\bv_\alpha |^2Q_{\alpha \beta}   d{\bm \xi}
+ \bv_\alpha \cdot  {\bf M}_\alpha \\
& = \sum_{\beta=1}^n \frac{2\rho_\alpha \rho_\beta }{(m_\alpha +m_\beta )^2} \chi_{\alpha \beta}   
\left\{3k_B(T_\beta - T_\alpha) +m_\beta |\bv_\alpha -\bv_\beta |^2 \right\}+ \bv_\alpha \cdot  {\bf M}_\alpha  \\
&= \sum_{\beta=1}^n \frac{2\rho_\alpha \rho_\beta }{(m_\alpha +m_\beta )^2} \chi_{\alpha \beta}    
\left\{ 3k_B(T_\beta -T_\alpha) +(m_\alpha \bv_\alpha +m_\beta \bv_\beta)(\bv_\beta -\bv_\alpha )\right\},
\end{aligned}
\end{align}
where $\chi_{\al\beta}$ is the positive and symmetric interaction coefficient whose expression can be found in \cite{AAP}.
The  previous expressions for the production terms satisfy the Galilean invariance \eqref{ProdTransf}:
\begin{align}
\begin{aligned}  \label{New-8}
	&{\bf M}_\alpha=  \hat{{\bf M}}_\alpha=
	\sum_{\beta=1}^n\frac{2\rho_\alpha \rho_\beta \chi_{\alpha \beta}}{m_\alpha +m_\beta }   (\bu_\beta -\bu_\alpha ), \quad  {e}_\alpha = \hat{e}_\alpha + \bv \cdot \hat{{\bf M}}_\alpha,\\
	& \hat{e}_\alpha =
	\sum_{\beta=1}^n \frac{2\rho_\alpha \rho_\beta  \chi_{\alpha \beta}}{(m_\alpha +m_\beta )^2}   
	 \Big \{ 3k_B(T_\beta -T_\alpha) +(m_\alpha \bu_\alpha +m_\beta \bu_\beta)(\bu_\beta -\bu_\alpha ) \Big \}. 
\end{aligned}
\end{align}
These production terms also satisfy the global requirement \eqref{New-5}.
\subsection{The KB-CS model} \label{sec:3.2}
In this subsection, we study the mixture of Eulerian monatomic gas \eqref{Eulero} with production terms \eqref{New-8} and caloric and thermal equation of state \eqref{constitutive} for rarefied monatomic gases:
\begin{equation}\label{mono}
    p_\alpha=\frac{k_B}{m_\al} \rho_\al T_\al, \quad \varepsilon_\al = \frac{3}{2}\frac{k_B}{m_\al} T_\alpha, \quad \alpha \in [n].
\end{equation} 
Even if there exists an $H$-theorem for the kinetic level, when we truncate the infinite moments associated with the Boltzmann equation, we cannot make sure whether the solutions to the truncated system \eqref{RT_model} satisfy an entropy principle or not. This is typical to all Rational Extended Thermodynamics in which the moments are truncated and closed using some universal principle like the Maximum Entropy Principle (MEP) (see \cite{BookNew} and references therein). \newline

Next, we verify that the production terms \eqref{New-8}  are compatible with an entropy principle.
\begin{theorem} \label{T3.1}
The production terms \eqref{New-8} satisfy the entropy inequality \eqref{New-5}, and any solution to system  \eqref{Eulero} with \eqref{New-8}, \eqref{mono} satisfies the entropy principle as well.
\end{theorem}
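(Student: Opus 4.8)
The plan is to verify the entropy inequality $\eqref{New-5}_5$ directly by substituting the explicit kinetic production terms $\eqref{New-8}$ into it, and then to read off the remaining assertions (existence of a compatible entropy density with nonnegative production for the homogeneous-in-$\bx$ reduced system) from the phenomenological framework already developed in Section \ref{sec:2}. Concretely, the key point is that $\eqref{New-6}$ expresses $\Sigma$ in terms of the frame-independent production terms $\hat{\mathbf{M}}_b,\hat{e}_b$ for $b\in[n-1]$, but it is cleaner to keep the fully symmetric summation over $\alpha,\beta\in[n]$ as in $\eqref{New-5}_5$: using $\hat e_\alpha = \hat e_\alpha^{\mathrm{th}} + \bu_\alpha\cdot\hat{\mathbf M}_\alpha$ with $\hat e_\alpha^{\mathrm{th}}:=\sum_\beta \frac{2\rho_\alpha\rho_\beta\chi_{\alpha\beta}}{(m_\alpha+m_\beta)^2}\,3k_B(T_\beta-T_\alpha)$, I would plug $\hat{\mathbf M}_\alpha = \sum_\beta \frac{2\rho_\alpha\rho_\beta\chi_{\alpha\beta}}{m_\alpha+m_\beta}(\bu_\beta-\bu_\alpha)$ into
\[
\Sigma = \sum_{\alpha=1}^n\Big(-\frac{\bu_\alpha}{T_\alpha}\cdot\hat{\mathbf M}_\alpha + \frac{1}{T_\alpha}\hat e_\alpha\Big),
\]
so that the $\bu_\alpha\cdot\hat{\mathbf M}_\alpha/T_\alpha$ terms cancel against the corresponding piece of $\hat e_\alpha/T_\alpha$, leaving $\Sigma = \sum_\alpha \frac{1}{T_\alpha}\hat e_\alpha^{\mathrm{th}} + (\text{a momentum-diffusion quadratic form})$. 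Wait — one must be careful: the cross term $\bv\cdot\hat{\mathbf M}_\alpha$ in $e_\alpha$ does not enter $\hat e_\alpha$, so the cancellation is between $-\frac{\bu_\alpha}{T_\alpha}\cdot\hat{\mathbf M}_\alpha$ and the $\bu_\alpha\cdot\hat{\mathbf M}_\alpha$ contained in $\hat e_\alpha/T_\alpha$, which is exact. Hence only two genuinely independent quadratic structures remain.

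The second step is to symmetrize each remaining double sum in $\alpha\leftrightarrow\beta$, using the symmetry $\chi_{\alpha\beta}=\chi_{\beta\alpha}$ (and $\rho_\alpha\rho_\beta$, $m_\alpha+m_\beta$ symmetric). For the thermal part this yields
\[
\sum_{\alpha,\beta} \frac{2\rho_\alpha\rho_\beta\chi_{\alpha\beta}}{(m_\alpha+m_\beta)^2}\,3k_B\,\frac{T_\beta-T_\alpha}{T_\alpha}
= \sum_{\alpha,\beta} \frac{3k_B\rho_\alpha\rho_\beta\chi_{\alpha\beta}}{(m_\alpha+m_\beta)^2}\,\frac{(T_\alpha-T_\beta)^2}{T_\alpha T_\beta}\ \ge 0,
\]
and for the momentum part an analogous pairing produces $\sum_{\alpha,\beta}\frac{\rho_\alpha\rho_\beta\chi_{\alpha\beta}}{m_\alpha+m_\beta}\big|\,\cdot\,\big|^2$-type nonnegative terms (this is exactly the computation that produced $\eqref{Sigma2}$ in the phenomenological case, with $\phi_{\alpha\beta},\zeta_{\alpha\beta}$ replaced by the explicit kinetic coefficients). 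Since all $\chi_{\alpha\beta}>0$, $\rho_\alpha>0$, $T_\alpha>0$, both sums are nonnegative, giving $\Sigma\ge 0$, with equality iff $\bu_\alpha$ are all equal and $T_\alpha$ all equal, i.e. at equilibrium. This establishes that $\eqref{New-8}$ satisfies $\eqref{New-5}$.

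For the final clause — that any solution of $\eqref{Eulero}$ with $\eqref{New-8}$, $\eqref{mono}$ satisfies the entropy principle — I would invoke the structure already recorded after $\eqref{Mbeb}$: define $S_\alpha$ via the Gibbs relation $\eqref{Gibbs}$ and ${\bm\Phi}_\alpha=\rho_\alpha S_\alpha\bv_\alpha$, note that the main field $\eqref{MF_model1}$ together with $\eqref{New-2}$ gives $\Sigma_\alpha = \hat\Lambda^{\rho_\alpha}\hat\tau_\alpha + \hat{\boldsymbol\Lambda}^{\bv_\alpha}\cdot\hat{\mathbf M}_\alpha + \hat\Lambda^{\varepsilon_\alpha}\hat e_\alpha$, and observe that summing over $\alpha$ reproduces precisely the $\Sigma$ just shown to be $\ge 0$; the convexity of $-\rho S$ for the monatomic equation of state $\eqref{mono}$ is classical, so the Ruggeri–Strumia symmetrization applies verbatim. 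The main obstacle is purely bookkeeping: getting the cancellation of the $\bu_\alpha\cdot\hat{\mathbf M}_\alpha$ terms and the index-symmetrization signs right so that both residual sums come out as manifest squares with positive weights; there is no conceptual difficulty once the phenomenological template of Section \ref{sec:2.3} is mirrored, since the kinetic coefficients play exactly the role of a specific admissible choice of $(\phi_{\alpha\beta})$, $(\zeta_{\alpha\beta})$.
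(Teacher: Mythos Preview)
Your overall strategy---substitute the kinetic productions $\eqref{New-8}$ into $\eqref{New-5}_5$, then symmetrize in $\alpha\leftrightarrow\beta$ to exhibit $\Sigma$ as a sum of nonnegative squares---is exactly what the paper does, and your thermal-part computation is correct and matches the paper's term $\mathcal{I}_{12}$.

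However, the decomposition you write, $\hat e_\alpha = \hat e_\alpha^{\mathrm{th}} + \bu_\alpha\cdot\hat{\mathbf M}_\alpha$, is false as stated. Comparing the velocity pieces in $\eqref{New-8}$, one has $(m_\alpha\bu_\alpha+m_\beta\bu_\beta)\cdot(\bu_\beta-\bu_\alpha)$ inside $\hat e_\alpha$, versus $(m_\alpha+m_\beta)\,\bu_\alpha\cdot(\bu_\beta-\bu_\alpha)$ in $\bu_\alpha\cdot\hat{\mathbf M}_\alpha$; the difference is $m_\beta|\bu_\beta-\bu_\alpha|^2$. So the cancellation of $-\tfrac{\bu_\alpha}{T_\alpha}\cdot\hat{\mathbf M}_\alpha$ against the velocity part of $\tfrac{1}{T_\alpha}\hat e_\alpha$ is \emph{not} exact: the leftover is precisely $\sum_\beta b_{\alpha\beta}\tfrac{m_\beta}{T_\alpha}|\bu_\beta-\bu_\alpha|^2$ with $b_{\alpha\beta}=\tfrac{2\rho_\alpha\rho_\beta\chi_{\alpha\beta}}{(m_\alpha+m_\beta)^2}$, and this residue is the \emph{only} source of the ``momentum-diffusion quadratic form'' you allude to. The paper carries out this combination directly (see $\eqref{New-10}$), then symmetrizes to obtain $\sum_{\alpha,\beta}\tfrac{b_{\alpha\beta}}{2T_\alpha T_\beta}(m_\alpha T_\alpha+m_\beta T_\beta)\,|\bu_\beta-\bu_\alpha|^2$, which is \emph{not} of the phenomenological shape $\sum\phi_{\alpha\beta}\big|\tfrac{\bu_\beta}{T_\beta}-\tfrac{\bu_\alpha}{T_\alpha}\big|^2$.

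Relatedly, your closing remark---that the kinetic productions are ``a specific admissible choice of $(\phi_{\alpha\beta}),(\zeta_{\alpha\beta})$''---is conceptually wrong and is in fact the reason the theorem requires work: the kinetic $\hat{\mathbf M}_\alpha$ is linear in $\bu_\beta-\bu_\alpha$ rather than in $\tfrac{\bu_\beta}{T_\beta}-\tfrac{\bu_\alpha}{T_\alpha}$, and the kinetic $\hat e_\alpha$ contains quadratic velocity terms absent from the phenomenological $\hat e_\alpha$ in $\eqref{prodmin}$. The two families of production terms agree only at first order near equilibrium (Section~\ref{sec:4.1}); the entropy inequality for the kinetic productions therefore does not follow from the phenomenological computation $\eqref{Sigma2}$ and must be verified independently, which is what you correctly set out to do before this last shortcut.
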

\begin{proof}  We set 
\begin{equation} \label{New-9}
b_{\alpha\beta} =\frac{2\rho_\alpha \rho_\beta }{(m_\alpha +m_\beta )^2 } \chi_{\alpha \beta}, \quad \alpha, \beta \in [n].
\end{equation}
Then, the matrix $(b_{\alpha \beta})$ is symmetric with respect to the index exchange transformation $\alpha~\leftrightarrow~\beta$. Now, we substitute \eqref{New-8} into $\eqref{New-5}_5$ to obtain
\begin{align}
\begin{aligned} \label{New-10}
 \Sigma &= \sum_{\beta=1}^{n}\left(
 - \frac{{\bu _{\beta}}}{T_\beta}\cdot\hat{\mathbf{M}}_{\beta}+\frac{1}{T_\beta} \hat{e}_{\beta}\right) \\
 &=  \sum_{\alpha,\beta=1} ^{n}   b_{\alpha\beta} \Big ( -\frac{\bu _\alpha}{T_\alpha} \cdot (\bu _\beta-\bu _\alpha)(m_\alpha +m_\beta) \\
 & \hspace{0.5cm} +\frac{1}{T_\alpha}\left\{ 3k_B(T_\beta -T_\alpha) +(m_\alpha \bu_\alpha +m_\beta \bu_\beta) \cdot (\bu_\beta -\bu_\alpha )\right\} \Big ) \\
 &= \sum_{\alpha,\beta=1} ^{n}   b_{\alpha\beta}\left\{\frac{m_\beta}{T_\alpha}|\bu_\beta-\bu_\alpha|^2+3 k_B\left(\frac{T_\beta}{T_\alpha}-1\right)
 \right\} \\
 &= {\mathcal I}_{11} + {\mathcal I}_{12}.
 \end{aligned}
 \end{align}
 Below, we estimate the term ${\mathcal I}_{1i}$ one by one. \newline
 
 \noindent $\bullet$~(Estimate of ${\mathcal I}_{11}$): We use the index exchange transformation $\alpha~\leftrightarrow~\beta$ to find 
 \begin{align}
 \begin{aligned} \label{New-10-1}
 {\mathcal I}_{11} &= \sum_{\alpha,\beta=1} ^{n}   b_{\alpha\beta} \frac{m_\beta}{T_\alpha}|\bu_\beta-\bu_\alpha|^2 = \sum_{\alpha,\beta=1} ^{n}   b_{\alpha\beta} \frac{m_\alpha}{T_\beta}|\bu_\beta-\bu_\alpha|^2 \\
 &= \frac{1}{2}  \sum_{\alpha,\beta=1} ^{n}   b_{\alpha\beta}  \Big(  \frac{m_\beta}{T_\alpha} +   \frac{m_\alpha}{T_\beta}  \Big)  |\bu_\beta-\bu_\alpha|^2 \\
 &=  \sum_{\alpha,\beta=1} ^{n}  \frac{b_{\alpha\beta}}{2 T_\alpha T_\beta} ( m_\alpha T_\alpha + m_\beta T_\beta)   |\bu_\beta-\bu_\alpha|^2.
 \end{aligned}
 \end{align}
 
 \vspace{0.2cm}
 
 \noindent $\bullet$~(Estimate of ${\mathcal I}_{12}$): Similarly, we have 
 \begin{align}
 \begin{aligned} \label{New-10-2}
 {\mathcal I}_{11} &= 3 k_B  \sum_{\alpha,\beta=1} ^{n}   b_{\alpha\beta}  \left(\frac{T_\beta}{T_\alpha}-1\right) =  3 k_B  \sum_{\alpha,\beta=1} ^{n}   b_{\alpha\beta}  \left(\frac{T_\alpha}{T_\beta}-1\right) \\
 &= \frac{ 3 k_B}{2}  \sum_{\alpha,\beta=1} ^{n}   b_{\alpha\beta} \Big( \frac{T_\beta}{T_\alpha} + \frac{T_\alpha}{T_\beta} -2  \Big) = \sum_{\alpha,\beta=1} ^{n}  \frac{3 k_B b_{\alpha\beta}}{2T_\alpha T_\beta} |T_\beta-T_\alpha|^2. 
 \end{aligned}
 \end{align}
 Finally, we combine \eqref{New-10}, \eqref{New-10-1} and \eqref{New-10-2} to find in non equilibrium
\begin{equation}\label{SigmaK}
\Sigma =  \sum_{\alpha,\beta=1}^{n}   \frac{b_{\alpha\beta}}{2T_\alpha T_\beta}  \Big ( (m_\alpha T_\alpha + m_\beta T_\beta) |\bu_\beta-\bu_\alpha|^2  + 3 k_B  |T_\beta-  T_\alpha|^2 \Big) > 0.
\end{equation}
Note that in equilibrium
\[ \Sigma = 0 \quad \iff \quad \bu_\beta = \bu_\alpha={\bf 0} \quad \mbox{and} \quad T_\alpha = T_\beta = T_0. \]
From  Gibbs' equation \eqref{Gibbs}, we have the supplementary entropy law \eqref{EP} with
\[
\rho S = \sum_{\al =1}^n \rho_\al S_\al, \quad {\bm \Phi} = \sum_{\al =1}^n \rho_\al S_\al \bv_\al, \quad 
S_\al = \log \left(\frac{T_\al^\frac{3}{2}}{\rho_\al}\right)^\frac{k_B}{m_\al},
\]
and entropy production given by \eqref{SigmaK}.  
\end{proof} 

\vspace{0.5cm}

Now, we combine \eqref{Eulero} and \eqref{New-8} to write down the kinetic theory-based model for thermo-mechanical CS particles:
\begin{equation}
\begin{cases} \label{New-11}
\displaystyle \frac{d\mathbf{x}_\alpha}{dt} = \mathbf{v}_\alpha, \quad \alpha \in [n], \\
\displaystyle \frac{d\mathbf{v}_\alpha}{dt} = \sum_{\beta=1}^n b_{\alpha \beta} (m_\alpha + m_\beta) (\bu_\beta -\bu_\alpha ), \\
\displaystyle \frac{d}{dt}\left(T_\alpha + {\frac{1}{2}  {|\bv_\alpha|^2}}\right) =  \sum_{\beta=1}^n b_{\alpha \beta} 
\left\{ 3k_B(T_\beta -T_\alpha) +(m_\alpha \bu_\alpha +m_\beta \bu_\beta)(\bu_\beta -\bu_\alpha )\right\} \\
\displaystyle \hspace{2cm} + \sum_{\beta=1}^n b_{\alpha \beta} (m_\alpha + m_\beta) (\bu_\beta -\bu_\alpha ) \cdot \bv.
\end{cases}
\end{equation}

Before we close this section, we also consider the advantages and disadvantages of the modeling based on kinetic theory. In some sense, the advantages and disadvantages of the KB-CS model are orthogonal to those of the PB-CS model:
\begin{itemize}
	\item \textbf{Advantage}: The matrices in the production terms \eqref{NNN-1}  are explicit, once we know the matrix $\chi_{\alpha \beta}$.
	\vspace{0.1cm}
	\item \textbf{Disadvantage}: The model is valid only for rarefied gases. In particular in this presentation for monatomic rarefied gases.
\end{itemize}

\section{Two normalized particle models for flocking} \label{sec:4}
\setcounter{equation}{0}
In this section, we study the comparison of two normalized particle models \eqref{TCS-1} and \eqref{New-11} by comparing the production terms resulting from phenomenological theory-based approach and kinetic theory-based approach. Since the coefficients in the right-hand side of  \eqref{TCS-1} and \eqref{New-11} depend on the matrices $(\phi_{\alpha \beta}), (\zeta_{\alpha \beta})$ and $(b_{\alpha \beta})$, we need suitable normalization for the comparison of two-particle models. These will be the content of discussions in the sequel. 

\subsection{Normalized productions terms} \label{sec:4.1}
First, we recall that the production terms \eqref{prodmin} and \eqref{New-8}:
\begin{equation}
\begin{cases}  \label{New-12}
\displaystyle \hat{\mathbf{M}}_{\alpha}=\frac{1}{n}\sum_{\beta=1}^{n}\phi _{\alpha\beta}\left( \frac{\bu _{\beta}%
	}{T_{\beta}}-\frac{\bu _{\alpha}}{T_{\alpha}}\right), \\
\displaystyle  \hat{e}_{\alpha}=\frac{1}{n}\sum_{\beta=1}^{n}\zeta _{\alpha\beta}\left(\frac{1}{T_{\alpha}}-\frac{1}{T_{\beta}}\right),
\end{cases}
\end{equation}
and 
\begin{equation}
\begin{cases} \label{New-13}
\displaystyle  \hat{{\bf M}}_\alpha= \frac{1}{n}
	\sum_{\beta=1}^n\frac{2n \rho_\alpha \rho_\beta }{m_\alpha +m_\beta } \chi_{\alpha \beta}  (\bu_\beta -\bu_\alpha), \\
\displaystyle  \hat{e}_\alpha = \frac{1}{n}
	\sum_{\beta=1}^n \frac{2n \rho_\alpha \rho_\beta  \chi_{\alpha \beta}}{(m_\alpha +m_\beta )^2}   
	\Big(  3k_B(T_\beta -T_\alpha) +(m_\alpha \bu_\alpha +m_\beta \bu_\beta)(\bu_\beta -\bu_\alpha ) \Big). 
\end{cases}
\end{equation}
At first glance,  production terms \eqref{New-12} and \eqref{New-13}  look completely different.  However, if we choose a well-prepared ansatz for $\phi_{\alpha \beta}$ and $\zeta_{\alpha \beta}$, then production terms can coincide at the first order. To see this, we consider the situation in which temperatures are close to the common temperature $T_0$ and diffusion velocities are close to zero: 
\[ |T_\alpha - T_0|  \ll 1 \quad \mbox{and} \quad  |\bu_\alpha |  \ll 1, \quad \alpha \in [n].  \]
In this case, one has 
\[ \frac{\bu _{\beta}}{T_{\beta}}-\frac{\bu _{\alpha}}{T_{\alpha}} \approx  \frac{1}{T_0} (\bu _{\beta} - \bu_\alpha), \quad \frac{1}{T_{\alpha}}-\frac{1}{T_{\beta}} \approx \frac{T_\beta - T_\alpha}{T_0^2}.   \]
If we choose 
\begin{equation}\label{lineare}
\phi_{\alpha\beta}= \frac{2n T_0\rho_\alpha \rho_\beta }{m_\alpha +m_\beta } \chi_{\alpha \beta}, \quad \zeta_{\alpha\beta}= \frac{6n k_B T_0^2\rho_\alpha \rho_\beta }{(m_\alpha +m_\beta )^2} \chi_{\alpha \beta}, 
\end{equation} 
then the production terms \eqref{New-12} and \eqref{New-13} coincide at the first-order. \newline

\noindent For the non-linear case, we express $\chi_{\alpha \beta}$ using $\phi_{\alpha \beta}$ from the relation $\eqref{lineare}_1$:  
\begin{equation} \label{New-14}
\chi_{\alpha \beta} = \frac{(m_\alpha + m_\beta)}{2n T_0 \rho_\alpha \rho_\beta} \phi_{\alpha \beta},
\end{equation}
and we substitute this relation into $\eqref{lineare}_2$ to see
\begin{equation} \label{New-15}
\zeta_{\alpha \beta} = \frac{3 k_B T_0}{m_\alpha + m_\beta} \phi_{\alpha \beta}. 
\end{equation}
Now, we substitute \eqref{New-14} into \eqref{New-13} and \eqref{lineare} to find production terms and entropy production term in terms of $(\phi_{\alpha \beta})$. \newline

\noindent $\bullet$~Case A (Production terms based on phenomenological theory):~we combine \eqref{New-12} and \eqref{New-15} to get 
\begin{align} 
\begin{aligned} \label{Macros}
&\hat{\mathbf{M}}_{\alpha}=\frac{1}{n}\sum_{\beta=1}^{n}\phi _{\alpha\beta}\left( \frac{\bu_{\beta}}{T_{\beta}}-\frac{\bu _{\alpha}}{T_{\alpha}}\right), \quad \hat{e}_{\alpha}=\frac{3 k_B T_0}{n}\sum_{\beta=1}^{n}\frac{\phi _{\alpha\beta}}{m_\alpha + m_\beta}\left(\frac{1}{T_{\alpha}}-\frac{1}{T_{\beta}}\right), \\
& \Sigma= \frac{1}{2n}\sum_{\alpha, \beta=1}^{n}\phi _{\alpha\beta}\left\{ \left( \frac{\bu _{\beta}}{T_{\beta}}-\frac{\bu _{\alpha}}{T_{\alpha}}\right)^2 + \frac{3k_B T_0}{m_\alpha + m_\beta}\left(\frac{1}{T_{\alpha}}-\frac{1}{T_{\beta}}\right)^2 \right \} \geq 0.
\end{aligned}
\end{align}
		
\vspace{0.2cm}		
    
 \noindent $\bullet$~Case B (Production terms based on kinetic theory):~we combine \eqref{New-13} and \eqref{New-14} to see
\begin{align}
\begin{aligned}\label{Micros}
&\hat{{\bf M}}_\alpha=\frac{1}{n T_0} \sum_{\beta=1}^n  \phi_{\alpha \beta}  (\bu_\beta -\bu_\alpha ),  \\
& \hat{e}_\alpha =\frac{1}{n T_0}\sum_{\beta=1}^{n}\frac{\phi _{\alpha\beta}}{\left(m_\alpha + m_\beta\right)}
\left\{ 3k_B(T_\beta -T_\alpha) +(m_\alpha \bu_\alpha +m_\beta \bu_\beta)(\bu_\beta -\bu_\alpha )\right\},  \\
 & \Sigma= \frac{1}{2n T_0}\sum_{\alpha, \beta=1}^{n}\frac{\phi _{\alpha\beta}}{T_\alpha T_\beta(m_\alpha + m_\beta)}\left\{ 
 \left(m_\alpha T_\alpha + m_\beta T_\beta\right)
|\bu _{\beta} - \bu _{\alpha} |^2 +  3k_B |T_\beta-T_\alpha|^2\right\}. 
\end{aligned}
\end{align}
 \subsection{Spatially homogeneous processes} \label{sec:4.2} 
 Consider a spatially homogeneous flow in which observables $(\rho_\alpha, \rho_\alpha \bv_\alpha, e_\alpha)$ depend only on time so that flux terms are all zero. In this case,  system \eqref{Eulero} becomes 
\begin{equation} \label{flock1}
\frac{d \rho_\alpha}{dt} = 0, \quad   \frac{d (\rho_\alpha \bv _\alpha)}{dt} =\hat{{\bf M}}_\alpha, \quad  \frac{d}{dt} \left(\rho_\alpha\varepsilon_\alpha +\frac{1}{2}\rho_\alpha \bv_\alpha^2 \right) =\hat{{e}}_\alpha + \hat{{\bf M}}_\alpha \cdot \bv.
\end{equation}
For the whole mixture, we have
\[
\frac{d \rho }{dt}=0, \quad \frac{d (\rho \bv) }{dt}=0, \quad \frac{d}{dt} \sum_{\al=1}^n  \left(\rho_\alpha\varepsilon_\alpha +\frac{1}{2}\rho_\alpha \bv_\alpha^2 \right)=0.
\]
Now, without loss of generality, we take 
\begin{equation} \label{New-15-1}
\bv = 0.
\end{equation}
Then, one has 
\[ \bu_\alpha = \bv_\alpha - \bv = \bv_\alpha, \]
and the relations  \eqref{flock1} imply 
\begin{equation}\label{flock}
\frac{d \bx_\al}{dt} = \bu_\al, \quad  \rho_\alpha  \frac{d  \bu _\alpha}{dt} =\hat{{\bf M}}_\alpha, \quad \rho_\alpha \frac{d}{dt} \left(\frac{3}{2}k_B T_\alpha +\frac{1}{2} \bu_\alpha^2 \right) =\hat{{e}}_\alpha. 
\end{equation}
Any solution of \eqref{flock} satisfies the entropy law:
\begin{equation}\label{entplaw}
\rho \frac{d S}{dt}= \Sigma >0, \quad \text{with} \quad \rho S = \sum_{\al=1}^n \rho_\al \log \left(\frac{T_\al^\frac{3}{2}}{\rho_\al}\right)^\frac{k_B}{m_\al}.
\end{equation}
The quantities $\rho_\alpha$ and $\rho$ are constants, and production terms  $\hat{{\bf M}}_\alpha$, $\hat{e}_\alpha$ and $\Sigma$ are given by the phenomenological theory \eqref{Macros} and  kinetic theory from \eqref{Micros}.
Then, the diffusion velocities $\bu_\al$ satisfy
\[ \sum_{\al =1}^n \rho_\al \bu_\al(t) =0, \quad t \geq 0, \]
and we chose reference as the rest frame. Then the initial data must satisfy the conditions:
\begin{equation}\label{inu}
   \sum_{\alpha =1}^n \rho_\alpha \bx_\alpha(0) =0, \qquad   \sum_{\alpha =1}^n \rho_\alpha \bu_\alpha(0) =0.
\end{equation}
Recall that global energy is conserved. Thus, it is equal to the equilibrium characterized by 
\[ \bu_\alpha =0, \quad  T_\al = T_\infty. \]
Therefore, we can evaluate the equilibrium temperature
    $T_\infty$ by the initial data:
\begin{equation}\label{T0}
    \sum_{\alpha =1}^n \left( \frac{1}{2}\rho_\alpha u^2_\alpha(0) + \frac{3}{2}\frac{k_B}{m_\al}\rho_\al T_\alpha(0) \right)= T_\infty \sum_{\alpha =1}^n \frac{3}{2}\frac{k_B}{m_\alpha} \rho_\alpha .
\end{equation}
\subsection{Particle models for flocking} \label{sec:4.3}
In this subsection, we compare the qualitative analysis for the PB-CS  and the KB-CS models. For simplicity, we chose all the constants as
\[ \rho_\alpha =1, \quad  m_\alpha =1, \quad  k_B= \frac{2}{3}, \]
and we set 
\begin{equation} \label{New-16}
a_{\alpha \beta} = \frac{\phi_{\alpha \beta}}{T_0} = n\chi_{\alpha \beta}.
\end{equation}
With these choices \eqref{New-16}, 
we can write the entropy law \eqref{entplaw} as 
\[
n S =\sum_{\al=1}^n \ln T_\al.
\]
Moreover the condition of initial data \eqref{inu} and \eqref{T0} become
\begin{equation}\label{initialinf}
\sum_{\al=1}^n \bx_\al(0) =0, \quad \sum_{\al=1}^n \bu_\al(0) =0, \quad  \frac{1}{n} \sum_{\al=1}^n \left(T_\al(0)+\frac{1}{2} u_\al^2(0)\right) =T_\infty.
\end{equation}
Then the following quantities are conserved for both models.
\begin{equation}\label{conserved}
\sum_{\al=1}^n \bx_\al(t) =0, \quad 
\sum_{\al=1}^n \bu_\al(t) =0, \quad
\frac{1}{n} \sum_{\al=1}^n \left(T_\al(t)+\frac{1}{2} u_\al^2(t)\right) = T_\infty.
\end{equation}
The flocking models \eqref{TCS-1} and \eqref{New-11} can be rewritten as follows. \newline

\noindent $\bullet$~Case A: The PB-CS model reads as follows.
\begin{equation} \label{TCS}
\begin{cases}
\displaystyle \frac{d\bx_{\al}}{dt} = \bu_{\al}, \quad \alpha \in [n], \\
\displaystyle \frac{d\bu_{\al}}{dt} = \frac{T_0}{n}\sum_{\beta=1}^n a_{\alpha \beta}\left(\frac{\bu_{\beta}}{T_\beta}-\frac{\bu_{\al}}{T_\al}\right), \\
\displaystyle \frac{d}{dt} \left(T_{\al}+ \frac{1}{2}u_{\al}^2\right)  = \frac{T_0^2}{n}\sum_{\beta=1}^n a_{\alpha \beta}\left(\frac{1}{T_\al} -\frac{1}{T_\beta} \right).
\end{cases}
\end{equation}
Note that this model \eqref{TCS} satisfies the entropy law: 
\begin{equation}\label{S-TCS}
n \frac{dS}{dt} = \frac{T_0}{2n} \sum_{\alpha,\beta=1}^n a_{\al\beta}\left\{\left(\frac{\bu_\beta}{T_\beta} -\frac{\bu_\al}{T_\al}\right)^2 + T_0 \left(\frac{1}{T_\alpha}-\frac{1}{T_\beta}\right)^2 \right\} \geq0.
\end{equation}
\noindent $\bullet$~Case B: The KB-CS model reads as follows.
\begin{equation}  \label{KCS}
\begin{cases}
\displaystyle \frac{d\bx_{\al}}{dt} = \bu_{\al}, \quad \alpha \in [n], \\
\displaystyle \frac{d\bu_{\al}}{dt} = \frac{1}{n}\sum_{\beta=1}^n a_{\alpha \beta}\left(\bu_{\beta}-\bu_{\al}\right),  \\
\displaystyle \frac{d}{dt} \left(T_{\al}+ \frac{1}{2}u_{\al}^2\right)  = \frac{1}{n}\sum_{\beta=1}^n a_{\alpha \beta}\left\{\left(T_{\beta}+ \frac{1}{2}u_{\beta}^2\right)-\left(T_{\al}+ \frac{1}{2}u_{\al}^2\right)\right\}.
\end{cases}
\end{equation}
This also satisfies the entropy inequality: 
\begin{equation}\label{S-KCS}
n \frac{dS}{dt} = \frac{1}{2n} \sum_{\alpha,\beta=1}^n \frac{a_{\al\beta}}{T_\al T_\beta} \left\{(T_\al +T_\beta)\frac{|\bu_\beta - \bu_\al|^2}{2} +|T_\beta - T_\alpha|^2\right\} \geq 0.
\end{equation}

\subsection{Remarks on $T_0$ and $T_\infty$} \label{sec:4.4}
A delicate point is $T_0$  appearing only in the PB-CS model \eqref{TCS} and not in the KB-CS \eqref{KCS}. In principle, $T_0$ is an arbitrary constant equilibrium value of the temperature for which the linearized system \eqref{TCS} and \eqref{KCS} coincides (see Section \eqref{sec:4.1}). However, for homogeneous solutions, the only equilibrium temperature is $T_\infty$ due to the constancy of global energy equation \eqref{conserved}$_3$.  Now $T_0= T_\infty$ (that has the meaning of average and flocking temperature)  depends on the initial data \eqref{initial}$_3$. If we take arbitrary initial data, $T_0$ changes and system \eqref{TCS} changes every time depending on the initial data. This is not reasonable and therefore if we want to compare two models, the correct procedure seems that we first fix a priori $T_0$ as that of flocking temperature: 
\[ T_0=T_\infty. \]
This provides constraints for initial data. In summary, to compare two particle models, we need to choose well-prepared initial data satisfying the following constraints:
\begin{equation}\label{initial}
\sum_{\al=1}^n \bx_\al(0) =0, \quad \sum_{\al=1}^n \bu_\al(0) =0, \quad  \frac{1}{n} \sum_{\al=1}^n \left(T_\al(0)+\frac{1}{2} u_\al^2(0)\right) =T_0.
\end{equation}
and the flocking state will be also characterized by the prescribed pair:
\[ \bu_\alpha =0,  \quad T_\alpha=T_0, \quad \forall~\alpha \in [n]. \]
The relation $\eqref{initial}_3$  follows from the fact the (absolute) temperatures must be positive and the assertion that $u_\al^2(0)$ is also positive gives a strong limitations between initial temperatures and initial diffusion velocities. In particular, we have
\[ T_0 > \overline{T}(0), \]
where $\overline{T}(0)$ is the average initial temperature:
\begin{equation*}
	\overline{T}(0)= \frac{1}{n} \sum_{\al=1}^n   T_\al(0).
\end{equation*}
This condition implies that we cannot have small thermal diffusion and large mechanical diffusion. In fact if  $|T_\al(0)-T_0| = {\mathcal O}(\varepsilon),$ then one has 
\[  |\overline{T}(0) - T_0| = {\mathcal O}(\varepsilon). \]
 As a consequence of \eqref{initial}$_3$, one  has 
 \[ u_\al^2(0) = {\mathcal O}(\varepsilon). \]
\section{Quantitative estimates for particle CS models} \label{sec:5}
\setcounter{equation}{0}
In this section, we study three issues for the particle models introduced in previous section. More precisely, we deal with the following issues: 
\begin{itemize}
\item
Flocking estimate for the KB-CS model \eqref{KCS}.
\vspace{0.2cm}
\item
Convergence between two models \eqref{TCS} and \eqref{KCS}, when the initial velocity and temperature are small perturbation around flocking state.
\item
Comparison of the difference of dynamics between two models.
\end{itemize}
In the following two subsections, we consider the above items one by one. 
\subsection{Flocking dynamics for the KB-CS model} \label{sec:5.1}
In this subsection, we consider the flocking estimate for the KB-CS model \eqref{KCS}. For this, we consider the following two types of interaction matrix for $a_{\alpha \beta}$:
\begin{enumerate}
\item
(Type A):~Constant symmetric positive network topology:
\begin{equation} \label{NNN-2}
a_{\alpha \beta} = a_{\beta \alpha} \in \mathbb{R}, \quad \forall~\alpha, \beta \in [n], \quad  \min_{\alpha,\beta} a_{\al\beta} = \underline{a} > 0.
\end{equation}
\item
(Type B):~Metric dependent non-negative network topology:
\begin{equation} \label{NNN-3}
a_{\alpha \beta}  :=  \frac{1}{(1+|\bx_{\beta}-\bx_{\al}|^2)^{\lambda}}, \quad \mbox{for}~~0<\lambda\leq \frac{1}{2}. 
\end{equation}
\end{enumerate}
We set the minimum value of the function $a_{\alpha \beta}$ as $\phi$:
\begin{equation} \label{NNN-4}
\phi(t) := \min_{1\leq \al,\beta \leq n} \frac{1}{(1+|\bx_{\beta}-\bx_{\al}|^2)^{\lambda}}.
\end{equation}
For observables $\{ (\bx_\alpha, \bu_\alpha, T_\alpha) \}$, we define $\ell^2$-norms of position, velocity and energy fluctuation as follows:
\begin{align} 
\begin{aligned} \label{XVdef}
\mathcal{X} &:= \bigg(\sum_{\al=1}^n |\bx_{\al}|^2 \bigg)^{\frac{1}{2}}, \quad \mathcal{V} := \bigg(\sum_{\al=1}^n |\bu_{\al}|^2 \bigg)^{\frac{1}{2}}, \\
E_{\al} &:= T_{\al}+ \frac{1}{2}u_{\al}^2 - T_0, \quad  \mathcal{E} :=\bigg(\sum_{\al=1}^n |E_\alpha|^2 \bigg)^{\frac{1}{2}},
\end{aligned}
\end{align}
where $T_0$ is defined in $\eqref{initial}_3$.
For notational simplicity, we set 
\[ {\mathcal X}_0 := {\mathcal X}(0), \quad  {\mathcal V}_0 := {\mathcal V}(0), \quad {\mathcal E}_0 := {\mathcal E}(0).  \]
Note that the energy conservation law $\eqref{conserved}_3$ implies 
\begin{equation} \label{New-16-1}
 \sum_{\al=1}^nE_{\al}(t) =0, \quad t \geq 0.  
 \end{equation}
For later use, we recall the flocking estimate of the PB-CS model \eqref{TCS} in the following theorem. 
\begin{theorem}\label{T5.1}\cite{Ha_Ruggeri}
Suppose that network topology satisfies Type A conditions, and initial data satisfy \eqref{initial} and 
\begin{align}
\begin{aligned} \label{New-16-2}
&  |\bu_{\al}(0)| \leq \frac{\varepsilon}{2}, \quad  |T_{\al}(0)-T_0| \leq \frac{\varepsilon T_0}{2}, \\
& \sum_{\al=1}^n \Big ( \frac{|\bu_{\al}(0)|^2}{2} + |T_{\al}(0)-T_0|^2 \Big) \leq \frac{\varepsilon^2}{8},
\end{aligned}
\end{align}
for some positive constant $\varepsilon$, and let $(\bx_{\al},\bu_{\al},T_{\al})$ be the solution of system \eqref{TCS}. Then, we have
\begin{align}
\begin{aligned} \label{New-16-2.5}
|\bu_{\al}(t)|< \varepsilon,~~|T_{\al}(t)-T_0|< \varepsilon T_0,~~\mathcal{V}(t)  \leq  C \mathcal{V}_0 e^{-\underline{a} t},~~\mathcal{E}(t) \leq  C \mathcal{E}_0 e^{-\underline{a} t}, \quad t \geq 0.
\end{aligned}
\end{align}
\end{theorem}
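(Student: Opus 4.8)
The statement is attributed to Ha–Ruggeri (ARMA 2017), so the task is mainly to reconstruct the bootstrapping argument for the PB-CS system \eqref{TCS}. I would organize the proof around a continuity (continuation) argument for the three pointwise bounds $|\bu_\al(t)| < \varepsilon$, $|T_\al(t) - T_0| < \varepsilon T_0$, and the $\ell^2$ decay of $\mathcal V$ and $\mathcal E$, treating the conserved quantities \eqref{conserved} and the entropy law \eqref{S-TCS} as the backbone. First I would set $\mathcal T := \sup\{ t \geq 0 : |\bu_\al(s)| < \varepsilon \text{ and } |T_\al(s) - T_0| < \varepsilon T_0 \text{ for all } s \in [0,t], \alpha \in [n]\}$; by the initial hypotheses \eqref{New-16-2} and continuity, $\mathcal T > 0$, and the goal is to show $\mathcal T = \infty$ by proving strict inequalities on $[0,\mathcal T)$ that improve the defining ones.

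\textbf{Step 1: a priori temperature positivity and two-sided bounds.} On $[0,\mathcal T)$ the temperatures stay in the range $((1-\varepsilon/2)T_0, (1+\varepsilon/2)T_0)$ up to the opening constants, so all denominators $T_\al, T_\beta$ are bounded above and below by constants depending only on $T_0$ and $\varepsilon$. This makes the PB-CS production terms genuinely comparable to the linear (KB-CS-type) ones: $\frac{T_0}{n}\sum_\beta a_{\al\beta}(\bu_\beta/T_\beta - \bu_\al/T_\al)$ is, up to constants, $\frac{1}{n}\sum_\beta a_{\al\beta}(\bu_\beta - \bu_\al)$ plus a quadratically small error controlled by $|T_\beta - T_\al|\,|\bu_\al|$.

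\textbf{Step 2: $\ell^2$-energy estimates via the graph Laplacian.} Using $\sum_\al \bu_\al = 0$ and the Type A lower bound $\min a_{\al\beta} = \underline a > 0$, I would differentiate $\mathcal V^2 = \sum_\al |\bu_\al|^2$ and $\mathcal E^2 = \sum_\al |E_\al|^2$ (recall $\sum_\al E_\al = 0$ by \eqref{New-16-1}), using the spectral gap of the complete-graph Laplacian: $\sum_{\al,\beta} a_{\al\beta}|w_\beta - w_\al|^2 \geq 2\underline a\, n \sum_\al |w_\al|^2$ whenever $\sum_\al w_\al = 0$. The cross-terms coupling velocity and temperature fluctuations are cubic (products like $|\bu_\al|^2|T_\beta - T_\al|$), hence bounded by $\varepsilon$ times quadratic quantities on $[0,\mathcal T)$; absorbing them for $\varepsilon$ small yields a differential inequality of the form $\frac{d}{dt}(\mathcal V^2 + \mathcal E^2) \leq -2\underline a(\mathcal V^2 + \mathcal E^2)$ (after, if needed, replacing $\mathcal E$ with the $\ell^2$-norm of $T_\al - T_0$, which is equivalent to $\mathcal E$ modulo $\varepsilon$-small terms). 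Grönwall then gives $\mathcal V(t) \leq C\mathcal V_0 e^{-\underline a t}$ and $\mathcal E(t) \leq C\mathcal E_0 e^{-\underline a t}$.

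\textbf{Step 3: closing the bootstrap.} From the $\ell^2$ decay and the summability structure I would recover the pointwise bounds: $|\bu_\al(t)| \leq \mathcal V(t) \leq C\mathcal V_0 e^{-\underline a t}$, and for temperature $|T_\al(t) - T_0| \leq |E_\al(t)| + \frac12 u_\al^2(t) \leq \mathcal E(t) + \frac12\mathcal V(t)^2$. Under \eqref{New-16-2} the right-hand sides are strictly below $\varepsilon$ and $\varepsilon T_0$ respectively for all $t \in [0,\mathcal T)$ — this is where the specific numerical constants $\varepsilon/2$, $\varepsilon T_0/2$, $\varepsilon^2/8$ in the hypotheses are used, so that the improved bound beats the defining one. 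This strict improvement forbids $\mathcal T < \infty$ (otherwise continuity would let us extend past $\mathcal T$), so $\mathcal T = \infty$ and all four conclusions in \eqref{New-16-2.5} hold.

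\textbf{Main obstacle.} The delicate point is Step 2: the PB-CS velocity and temperature equations are genuinely nonlinearly coupled through the $1/T_\al$ weights and through the $\hat{\mathbf M}_\al \cdot \bv$-type term in the energy equation (here with $\bv = 0$ it simplifies, but $u_\al^2$ still appears inside $E_\al$). One must verify that every coupling term is at least cubic in the fluctuations — so that it is $O(\varepsilon)$ relative to the dissipation — and that the "bad" contributions do not destroy the sign of the Laplacian dissipation. Organizing these error terms cleanly, and checking that the threshold on $\varepsilon$ needed to absorb them is compatible with the constants in \eqref{New-16-2}, is the real content; the rest is the standard Cucker–Smale $\ell^2$-flocking machinery.
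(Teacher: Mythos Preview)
The paper does not prove Theorem~\ref{T5.1}; it merely \emph{recalls} it from \cite{Ha_Ruggeri} (note the citation in the theorem header and the sentence ``For later use, we recall the flocking estimate of the PB-CS model~\eqref{TCS} in the following theorem'' immediately preceding it), and then moves directly to Remark~\ref{R5.1}. So there is no proof in the paper to compare your proposal against.

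That said, your reconstruction is the right shape for how such a result is established in \cite{Ha_Ruggeri}: a continuation argument on the pointwise bounds $|\bu_\al|<\varepsilon$, $|T_\al-T_0|<\varepsilon T_0$, inside of which the nonlinear $1/T_\al$ weights are uniformly comparable to $1/T_0$, so that the $\ell^2$ dissipation estimates for $\mathcal V$ and $\mathcal E$ go through via the symmetrization/index-exchange trick and the Type~A spectral gap, with the coupling errors being genuinely higher order (cubic) and absorbable for $\varepsilon$ small. Your identification of the main obstacle---organizing the cubic cross-terms so they do not spoil the sign of the Laplacian dissipation---is exactly the substantive point. One small caution: the theorem as stated says ``for some positive constant $\varepsilon$'' without an explicit smallness threshold, but your Step~2 (correctly) needs $\varepsilon$ small enough to absorb the error terms; in practice the constants in \eqref{New-16-2} are arranged so that this is automatic, and the constant $C$ in the conclusion soaks up the rest.
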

\begin{remark}\label{R5.1}
If \eqref{New-16-2.5} holds, then we have 
\[  |\bx_\alpha(t) - \bx_\alpha(0) | \leq \frac{C {\mathcal V}_0}{\underline{a}}, \quad  |\bu_\alpha(t)  | \leq C \mathcal{V}_0 e^{-\underline{a} t}, \quad  |E_\alpha(t)| \leq C \mathcal{E}_0 e^{-\underline{a} t}, \quad t \geq 0. \]
\end{remark}

\vspace{0.5cm}

Next, we return to the flocking dynamics of \eqref{KCS}.
\noindent For later use, we set 
\begin{align}
\begin{aligned} \label{NNN-5}
C_{\lambda}(t) &:= \begin{cases} 
\displaystyle \int_0^t e^{-\frac{k}{1-2\lambda}((1+s)^{1-2\lambda}-1)}ds, \quad & \lambda \in (0, \frac{1}{2}), \\  
\displaystyle \int_0^t (1+s)^{-\frac{k}{2}} ds, \quad &\lambda=\frac{1}{2}, \end{cases} \\
\Lambda_0 &:= (\max\{1+2\mathcal{X}_0^2,2\mathcal{V}_0^2\})^{-\lambda} \leq 1.
\end{aligned}
\end{align}

\noindent Now, we are ready to state our first main result on the flocking behaviors of the model \eqref{KCS}. 

\begin{theorem} \label{T5.2}
Let $\{ (\bx_{\al},\bu_{\al},T_{\al}) \}$ be a global solution of system \eqref{KCS} with the initial data $\{ (\bx_{\al}(0),\bu_{\al}(0),T_{\al}(0)) \}$ satisfying \eqref{initial}. Then, one has the following assertions:
\begin{enumerate}
\item If the network topology $(a_{\alpha \beta})$ satisfies Type A condition \eqref{NNN-2},  then we have 
\[ \mathcal{V}(t) \leq \mathcal{V}_0 e^{-\underline{a} t}, \quad {\mathcal X}(t) \leq \mathcal{X}_0 + \frac{\mathcal{V}_0}{\underline{a}}, \quad  \mathcal{E}(t) \leq \mathcal{E}_0 e^{-\underline{a} t}, \quad t \geq 0. \]
\item
If the network topology $(a_{\alpha \beta})$ satisfies Type B condition \eqref{NNN-3}, then we have 
\begin{align*}
\begin{aligned}
& \mathcal{V}(t) \leq \mathcal{V}_0 \begin{cases} e^{-\frac{k}{1-2\lambda}((1+t)^{1-2\lambda}-1)}, \quad & \lambda \in (0, \frac{1}{2}), \\ 
(1+t)^{-k}, \quad &\lambda=\frac{1}{2},
\end{cases} \quad  \mathcal{X}(t) \leq \mathcal{X}_0 + C_{\lambda}(t)\mathcal{V}_0, \\
& \mathcal{E}(t) \leq \mathcal{E}_0 \begin{cases} e^{-\frac{k}{1-2\lambda}((1+t)^{1-2\lambda}-1)}, \quad & \lambda \in (0, \frac{1}{2}), \\ 
(1+t)^{-k}, \quad &\lambda=\frac{1}{2},
\end{cases} 
\end{aligned}
\end{align*}
where $C_{\lambda}(t)$ is defined in \eqref{NNN-5}.
\end{enumerate}
\end{theorem}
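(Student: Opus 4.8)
\emph{Proof proposal.} The plan is to exploit the structural fact that, by the conservation laws \eqref{conserved} and \eqref{New-16-1}, the diffusion velocities $\{\bu_\al\}$ \emph{and} the energy fluctuations $\{E_\al\}$ each satisfy a linear consensus system driven by the \emph{same} symmetric communication weights $a_{\al\beta}/n$. Hence the decay estimates for $\mathcal{V}$ and for $\mathcal{E}$ will follow from one and the same scalar Gr\"onwall argument, while the bound for $\mathcal{X}$ is then obtained by integrating $\dot{\mathcal{X}}\le\mathcal{V}$. The proof thus reduces to three ingredients: a dissipation identity, a lower bound on the coupling strength, and an elementary ODE comparison.

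First I would record the dissipation identities. Differentiating $\mathcal{V}^2=\sum_\al|\bu_\al|^2$ along \eqref{KCS} and symmetrizing under $\al\leftrightarrow\beta$ (using $a_{\al\beta}=a_{\beta\al}$) gives $\frac{d}{dt}\mathcal{V}^2=-\frac{1}{n}\sum_{\al,\beta=1}^{n}a_{\al\beta}\,|\bu_\beta-\bu_\al|^2$, and since $\sum_\al\bu_\al(t)=0$ by \eqref{conserved}, the elementary identity $\sum_{\al,\beta}|\bu_\beta-\bu_\al|^2=2n\,\mathcal{V}^2$ holds. The verbatim computation with $E_\al$ in place of $\bu_\al$, invoking $\sum_\al E_\al(t)=0$ from \eqref{New-16-1}, gives $\frac{d}{dt}\mathcal{E}^2=-\frac{1}{n}\sum_{\al,\beta}a_{\al\beta}|E_\beta-E_\al|^2$ with $\sum_{\al,\beta}|E_\beta-E_\al|^2=2n\,\mathcal{E}^2$, and Cauchy--Schwarz applied to $\frac{d}{dt}\mathcal{X}^2=2\sum_\al\bx_\al\cdot\bu_\al$ gives $\frac{d}{dt}\mathcal{X}\le\mathcal{V}$. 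For Type A this settles everything at once: using $a_{\al\beta}\ge\underline{a}$ turns the two identities into $\frac{d}{dt}\mathcal{V}^2\le-2\underline{a}\,\mathcal{V}^2$ and $\frac{d}{dt}\mathcal{E}^2\le-2\underline{a}\,\mathcal{E}^2$, hence $\mathcal{V}(t)\le\mathcal{V}_0 e^{-\underline{a}t}$ and $\mathcal{E}(t)\le\mathcal{E}_0 e^{-\underline{a}t}$, and then $\mathcal{X}(t)\le\mathcal{X}_0+\int_0^t\mathcal{V}(s)\,ds\le\mathcal{X}_0+\mathcal{V}_0/\underline{a}$.

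For Type B the weight $a_{\al\beta}$ weakens as the configuration spreads, so I would run a one-pass bootstrap. Since $a_{\al\beta}\ge0$, the first identity shows $\mathcal{V}$ is non-increasing, which yields the crude a priori bound $\mathcal{X}(t)\le\mathcal{X}_0+\mathcal{V}_0 t$. Because $\max_{\al,\beta}|\bx_\al-\bx_\beta|^2\le 2\mathcal{X}^2$ and $r\mapsto(1+r)^{-\lambda}$ is decreasing, $a_{\al\beta}\ge\phi(t)\ge(1+2\mathcal{X}(t)^2)^{-\lambda}$; substituting the crude bound and using the elementary inequality $1+2(\mathcal{X}_0+\mathcal{V}_0 s)^2\le\big(\max\{1+2\mathcal{X}_0^2,\,2\mathcal{V}_0^2\}\big)(1+s)^2$ — which rests on $2\mathcal{X}_0\mathcal{V}_0\le\mathcal{X}_0^2+\mathcal{V}_0^2\le\max\{1+2\mathcal{X}_0^2,2\mathcal{V}_0^2\}$ — produces the clean lower bound $\phi(s)\ge\Lambda_0(1+s)^{-2\lambda}$, with $\Lambda_0$ as in \eqref{NNN-5} (so that the constant $k$ appearing there is $k=\Lambda_0$). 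Feeding this into $\frac{d}{dt}\mathcal{V}^2\le-2\phi(t)\mathcal{V}^2$ and $\frac{d}{dt}\mathcal{E}^2\le-2\phi(t)\mathcal{E}^2$ and integrating, while computing $\int_0^t(1+s)^{-2\lambda}\,ds$ separately in the cases $\lambda\in(0,1/2)$ and $\lambda=1/2$, gives the stated decay profiles for $\mathcal{V}$ and $\mathcal{E}$; and $\mathcal{X}(t)\le\mathcal{X}_0+\int_0^t\mathcal{V}(s)\,ds\le\mathcal{X}_0+\mathcal{V}_0 C_\lambda(t)$ finishes the argument.

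I expect the only genuinely delicate step to be the Type B bookkeeping of constants: one must check that the crude position growth does not out-run the dissipation, i.e. that the algebra reproduces exactly the constant $\Lambda_0$ and the function $C_\lambda(t)$ of the statement, and in the borderline case $\lambda=1/2$ — where $\int_0^t\phi$ grows only like $\Lambda_0\ln(1+t)$ and $\mathcal{X}(t)$ need not remain bounded — one has to be careful to present the position estimate through $C_{1/2}(t)$ rather than claiming boundedness of $\mathcal{X}$.
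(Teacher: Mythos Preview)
Your proposal is correct and follows essentially the same route as the paper: derive the dissipation identities $\frac{d}{dt}\mathcal{V}^2=-\frac{1}{n}\sum_{\al,\beta}a_{\al\beta}|\bu_\beta-\bu_\al|^2$ and $\frac{d}{dt}\mathcal{E}^2=-\frac{1}{n}\sum_{\al,\beta}a_{\al\beta}|E_\beta-E_\al|^2$, use the zero-mean constraints to turn $\sum_{\al,\beta}|\cdot_\beta-\cdot_\al|^2$ into $2n\mathcal{V}^2$ (resp.\ $2n\mathcal{E}^2$), apply Gr\"onwall with the lower bound $\underline{a}$ for Type~A, and for Type~B run the crude bootstrap $\mathcal{X}(t)\le\mathcal{X}_0+\mathcal{V}_0 t$ to extract $\phi(t)\ge\Lambda_0(1+t)^{-2\lambda}$ before integrating. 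Your bookkeeping for the constant $\Lambda_0$ (via $2\mathcal{X}_0\mathcal{V}_0\le\mathcal{X}_0^2+\mathcal{V}_0^2\le\max\{1+2\mathcal{X}_0^2,2\mathcal{V}_0^2\}$) and your identification $k=\Lambda_0$ are exactly what the paper does, only stated a bit more explicitly.
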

\begin{proof} 
\noindent (i)~Suppose that the network topology $(a_{\alpha \beta})$ satisfies Type A conditions \eqref{NNN-2}.  \\
\noindent $\bullet$~Case A (Estimates for ${\mathcal X}$ and ${\mathcal V}$):  By the Cauchy-Schwartz inequality and \eqref{XVdef}, we have
\begin{align*}
\frac{d\mathcal{X}^2}{dt} 
=2\sum_{\al=1}^n \bx_{\al} \cdot \bu_{\al}
\leq 2 \sqrt{\sum_{\al=1}^n|\bx_{\al}|^2}\sqrt{\sum_{\al=1}^n|\bu_{\al}|^2}   = 2\mathcal{X} \cdot \mathcal{V}.
\end{align*}
This yields
\begin{align}\label{XV1} 
\bigg|\frac{d\mathcal{X}}{dt} \bigg| \leq \mathcal{V}.
\end{align}
On the other hand, one has 
\begin{align}\label{VKCS}
\begin{aligned}
\frac{d\mathcal{V}^2}{dt} 
&= \frac{2}{n} \sum_{\al=1}^n\sum_{\beta=1}^n a_{\al\beta}\left(\bu_{\beta}-\bu_{\al}\right) \cdot  \bu_{\al}
= -\frac{1}{n} \sum_{\al=1}^n\sum_{\beta=1}^n a_{\al\beta} |\bu_{\beta}-\bu_{\al}|^2  \\
&\leq -\frac{\underline{a}}{n} \sum_{\al=1}^n\sum_{\beta=1}^n  (|\bu_{\beta}|^2 -2 \bu_\beta \cdot \bu_\al +|\bu_{\al}|^2) \\
&=  -\frac{\underline{a}}{n} \sum_{\al=1}^n\sum_{\beta=1}^n  (|\bu_{\beta}|^2 +|\bu_{\al}|^2) \leq - 2 \underline{a} \mathcal{V}^2 ,
\end{aligned}
\end{align}
where we used the symmetric relation $a_{\alpha \beta} = a_{\beta \alpha}$ and $\eqref{conserved}_2$.  This yields 
\begin{align}\label{VexpKCS}
\mathcal{V}(t) \leq \mathcal{V}_0 e^{-\underline{a} t}.
\end{align}
Now, we use \eqref{XV1} and  \eqref{VexpKCS} to see
\begin{equation} \label{New-18}
\mathcal{X}(t) \leq \mathcal{X}_0 +\int_0^t\mathcal{V}(s)ds \leq \mathcal{X}_0 + \mathcal{V}_0 \int_0^te^{-\underline{a} s}ds \leq \mathcal{X}_0 + \frac{\mathcal{V}_0}{\underline{a}}.
\end{equation}

\vspace{0.5cm}

\noindent $\bullet$~Case B (Estimate for ${\mathcal E}$):~we use  \eqref{XVdef} and $\eqref{KCS}_3$ to find 
\begin{align*}
\begin{aligned}
\frac{d \mathcal{E}^2}{dt} &= \frac{2}{n}  \sum_{\al, \beta=1}^n  a_{\al\beta} \left(T_{\al} +\frac{1}{2}|\bu_{\al}|^2-T_{\infty}\right) \left(T_{\beta} + \frac{1}{2}|\bu_{\beta}|^2-T_{\al}-\frac{1}{2}|\bu_{\al}|^2\right) \\
&= \frac{2}{n}  \sum_{\al, \beta=1}^n  a_{\al\beta} E_{\al} \left(\Big(T_{\beta} + \frac{1}{2}|\bu_{\beta}|^2 - T_\infty \Big) - \Big( T_{\al} + \frac{1}{2}|\bu_{\al}|^2 - T_\infty \Big) \right) \\
&=  \frac{2}{n}  \sum_{\al, \beta=1}^n  a_{\al\beta} E_\alpha \cdot (E_\beta - E_\alpha) =  -\frac{1}{n}  \sum_{\al, \beta=1}^n  a_{\al\beta} |E_\beta - E_\alpha|^2.
\end{aligned}
\end{align*}
Now, we use \eqref{New-16-1} to find
\begin{align}
\begin{aligned} \label{EKCS}
\frac{d \mathcal{E}^2}{dt} &= -\frac{1}{n}  \sum_{\al=1}^n \sum_{\beta=1}^na_{\al\beta} | E_{\beta}-E_{\al}|^2  \leq  -\frac{\underline{a}}{n}  \sum_{\al=1}^n \sum_{\beta=1}^n  | E_{\beta}-E_{\al}|^2 \\
&= -\frac{\underline{a}}{n}  \sum_{\al=1}^n \sum_{\beta=1}^n  \Big( | E_{\beta}|^2 -2 E_\alpha E_\beta + |E_{\al}|^2 \Big) \\
& =  -2 \underline{a}  \sum_{\al=1}^n | E_{\alpha}|^2 =  -2 \underline{a} \mathcal{E}^2.
\end{aligned}
\end{align}
This yields the desired estimate. 

\vspace{0.5cm}

\noindent (ii) ~Suppose that the network topology $(a_{\alpha \beta})$ satisfies Type B conditions \eqref{NNN-3}.
We use the same argument employed in \eqref{VKCS} to find 
\[
\frac{d\mathcal{V}^2}{dt}  \leq - 2 \phi(t) \mathcal{V}^2 \leq 0,
\]
where $\phi(t)$ is defined in \eqref{NNN-4}.
This implies 
\[
\mathcal{V}(t) \leq \mathcal{V}_0 \quad \mbox{and} \quad  \mathcal{X}(t) \leq \mathcal{X}_0+\mathcal{V}_0t.
\]
Again, we have
\[ |\bx_{\beta}-\bx_{\al}| 
\leq \mathcal{X}(t) \leq \mathcal{X}_0+\mathcal{V}_0t.
\]
Now we use this and \eqref{NNN-4} to get  
\begin{equation} \label{philower}
\phi(t) \geq \frac{1}{(1+|\mathcal{X}_0+\mathcal{V}_0t|^2)^{\lambda}} 
\geq \frac{1}{(\max\{1+2\mathcal{X}_0^2,2\mathcal{V}_0^2\}(1+t)^2)^{\lambda}} \geq \frac{\Lambda_0}{(1+t)^{2\lambda}},
\end{equation}
where $\Lambda_0$ is defined in $\eqref{NNN-5}_2$. \newline

\noindent Next, we combine \eqref{VexpKCS} and \eqref{philower} to find 
\begin{equation} \label{New-19}
\mathcal{V}(t) \leq \mathcal{V}_0 e^{-k \int_0^t \frac{ds}{(1+s)^{2\lambda}}}.
\end{equation}
On the other hand, one has 
\begin{equation} \label{New-20}
\int_0^t \frac{\Lambda_0}{(1+s)^{2\lambda}} ds
= \begin{cases} 
\Lambda_0 \ln(1+t), \quad & \lambda= \frac{1}{2}, \\ 
\frac{\Lambda_0}{1-2\lambda}((1+t)^{1-2\lambda}-1) , \quad & \lambda \in \Big(0,  \frac{1}{2} \Big). 
\end{cases}
\end{equation}
We combine \eqref{New-19} and \eqref{New-20} to find 
\[
\mathcal{V}(t) \leq \mathcal{V}_0 \begin{cases} e^{-\frac{\Lambda_0}{1-2\lambda}((1+t)^{1-2\lambda}-1)}, \quad & \lambda \in (0, \frac{1}{2}), \\ 
(1+t)^{-\Lambda_0}, \quad &\lambda=\frac{1}{2}.
\end{cases} 
\]
Next, we use \eqref{XV1} to get 
\begin{align*}
\mathcal{X}(t) &\leq \mathcal{X}_0 + \int_0^t \mathcal{V}(s) ds 
\leq \mathcal{X}_0 + C_{\lambda}(t)\mathcal{V}_0.
\end{align*}
On the other hand, by the slight variation of  \eqref{EKCS} and \eqref{philower}, we have
\[
\frac{d \mathcal{E}^2}{dt} \leq  -\frac{2\Lambda_0}{(1+t)^{2\lambda}} \mathcal{E}^2, \quad t > 0.
\]
This yields the desired estimate. 
\end{proof}
\begin{remark} \label{R5.2}
We provide several comments on the flocking estimates for the PB-CS and KB-CS models. 
\begin{enumerate}
\item
By Theorem \ref{T5.2} and \eqref{XVdef}, we have
\[ 
\sup_{0 \leq t < \infty} |\bx_\alpha(t)| \leq \mathcal{X}_0 + \frac{\mathcal{V}_0}{\underline{a}}, \quad |\bu_\alpha(t)| \leq  \mathcal{V}_0 e^{-\underline{a} t}, \quad  |E_\alpha(t)| \leq 
\mathcal{E}_0 e^{-\underline{a} t}. \quad t \geq 0.
\]
\item
The main difference in the flocking estimate between the two models is that the model \eqref{KCS} emerges flocking for any initial data without smallness condition, while the model \eqref{TCS} emerges flocking only in a small diffusion case (see Theorem \ref{T5.1} and Theorem \ref{T5.2}). 
\end{enumerate}
\end{remark}
As a direct corollary of Theorem \ref{T5.1} and Theorem \ref{T5.2}, we have asymptotic equivalence for the PB-CS and the KB-CS models, when the thermal and mechanical diffusions are sufficiently small:
\[  \max_{\alpha} |T_{\al}(0)-T_0| \ll 1 \quad \mbox{and} \quad  \max_{\alpha} |\bu_\alpha(0) |  \ll 1.\]
Let $(\bx_{\al}^P,\bu_{\al}^P,T_{\al}^P)$ and $(\bx_{\al}^K,\bu_{\al}^K,T_{\al}^K)$ be the solutions to \eqref{TCS} and \eqref{KCS}, respectively.  Here the superscripts $P$ and $K$ denote the initials of PB-CS and KB-CS. We define $E_{\al}^P$ and $E_{\al}^K$ to satisfy the relation \eqref{XVdef}.
\begin{corollary}\label{C5.1} 
Suppose that the network topology satisfies Type A conditions \eqref{NNN-2}, and common initial data satisfy \eqref{initial} and \eqref{New-16-2} and let $\{ (\bx_{\al}^P,\bu_{\al}^P,T_{\al}^P)\}$ and $\{ (\bx_{\al}^K,\bu_{\al}^K,T_{\al}^K) \}$ be the global smooth solutions of \eqref{TCS} and \eqref{KCS}, respectively. Then, there exists a positive constant $C>0$ such that 
\begin{equation} \label{New-21}
\begin{cases}
\displaystyle   |\bu_{\al}^P(t)-\bu_{\al}^K(t)| \leq C\varepsilon e^{-\underline{a} t}, \quad t \geq 0, \quad  \al \in [n], \\
\displaystyle   |\bx_{\al}^P(t)-\bx_{\al}^K(t)| \leq C \varepsilon, \quad  |E_{\al}^P(t)-E_{\al}^K(t)| \leq C\varepsilon e^{-\underline{a} t}.
\end{cases}
\end{equation}
\end{corollary}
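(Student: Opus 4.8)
The plan is to estimate the differences of the solutions in the natural $\ell^2$-norms by a Gronwall-type argument, writing down the ODE system satisfied by the differences $\bu_\al^P - \bu_\al^K$ and $E_\al^P - E_\al^K$ and controlling the source terms using the \emph{a priori} bounds already established. Set $W_\al := \bu_\al^P - \bu_\al^K$, $Z_\al := E_\al^P - E_\al^K$, and the aggregate quantities $\mathcal{W} := (\sum_\al |W_\al|^2)^{1/2}$, $\mathcal{Z} := (\sum_\al |Z_\al|^2)^{1/2}$. First I would subtract the velocity equation of \eqref{KCS} from that of \eqref{TCS}; the right-hand side of the PB-CS velocity equation is $\frac{T_0}{n}\sum_\beta a_{\al\beta}\big(\frac{\bu_\beta^P}{T_\beta^P} - \frac{\bu_\al^P}{T_\al^P}\big)$, which I would rewrite as $\frac1n\sum_\beta a_{\al\beta}(\bu_\beta^P - \bu_\al^P)$ plus a remainder of the form $\frac{T_0}{n}\sum_\beta a_{\al\beta}\big[(\frac{1}{T_\beta^P}-\frac{1}{T_0})\bu_\beta^P - (\frac{1}{T_\al^P}-\frac{1}{T_0})\bu_\al^P\big]$. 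Since $T_0 = T_\infty$ and, by Theorem \ref{T5.1} and Remark \ref{R5.1}, $|T_\al^P(t) - T_0| \le \varepsilon T_0$ and $|\bu_\al^P(t)| \le C\mathcal{V}_0 e^{-\underline a t} = \mathcal{O}(\varepsilon)e^{-\underline a t}$ (recall $\mathcal{V}_0 = \mathcal{O}(\varepsilon)$ from \eqref{New-16-2}), this remainder is pointwise $\mathcal{O}(\varepsilon^2) e^{-\underline a t}$ — in particular $\mathcal{O}(\varepsilon)e^{-\underline a t}$ times a small factor. The ``good'' part $\frac1n\sum_\beta a_{\al\beta}(\bu_\beta^P-\bu_\al^P) - \frac1n\sum_\beta a_{\al\beta}(\bu_\beta^K-\bu_\al^K) = \frac1n\sum_\beta a_{\al\beta}(W_\beta - W_\al)$ is exactly the KB-CS velocity-alignment operator applied to $W$, so testing against $W_\al$ and summing reproduces the dissipative estimate $\frac{d}{dt}\mathcal{W}^2 \le -2\underline a\,\mathcal{W}^2 + C\varepsilon e^{-\underline a t}\mathcal{W}$ exactly as in \eqref{VKCS}, using $\sum_\al W_\al = 0$ (which holds since both models conserve total momentum with the same initial data).

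Next I would treat the energy difference similarly. Subtracting $\eqref{KCS}_3$ from $\eqref{TCS}_3$, the KB-CS right-hand side is $\frac1n\sum_\beta a_{\al\beta}(E_\beta^K - E_\al^K)$, while the PB-CS right-hand side $\frac{T_0^2}{n}\sum_\beta a_{\al\beta}(\frac{1}{T_\al^P}-\frac{1}{T_\beta^P})$ I would rewrite, using $\frac{1}{T_\al^P} - \frac{1}{T_\beta^P} = \frac{T_\beta^P - T_\al^P}{T_\al^P T_\beta^P}$ and $T_\al^P = T_0 + (T_\al^P - T_0)$, as $\frac1n\sum_\beta a_{\al\beta}(T_\beta^P - T_\al^P)$ plus an $\mathcal{O}(\varepsilon^2)e^{-\underline a t}$ remainder; then since $E_\al^P = T_\al^P - T_0 + \frac12|\bu_\al^P|^2$ with $|\bu_\al^P|^2 = \mathcal{O}(\varepsilon^2)e^{-2\underline a t}$, one has $T_\beta^P - T_\al^P = (E_\beta^P - E_\al^P) + \mathcal{O}(\varepsilon^2)e^{-2\underline a t}$. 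Thus the PB-CS energy source equals $\frac1n\sum_\beta a_{\al\beta}(E_\beta^P - E_\al^P)$ up to an $\mathcal{O}(\varepsilon^2)e^{-\underline a t}$ error, and the leading difference is $\frac1n\sum_\beta a_{\al\beta}(Z_\beta - Z_\al)$ — again the KB-CS energy-alignment operator applied to $Z$. Using $\sum_\al Z_\al = 0$ (both models satisfy \eqref{New-16-1} with the same $T_0$), testing against $Z_\al$ reproduces \eqref{EKCS} and gives $\frac{d}{dt}\mathcal{Z}^2 \le -2\underline a\,\mathcal{Z}^2 + C\varepsilon e^{-\underline a t}\mathcal{Z}$.

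From the two differential inequalities, applying the scalar comparison lemma $\frac{d}{dt}y \le -\underline a\, y + g$ (in the form $\frac{d}{dt}\mathcal{W} \le -\underline a\mathcal{W} + C\varepsilon e^{-\underline a t}$, and likewise for $\mathcal{Z}$) and noting $\mathcal{W}(0) = \mathcal{Z}(0) = 0$ since the initial data are common, I would conclude $\mathcal{W}(t) \le C\varepsilon t\, e^{-\underline a t} \le C'\varepsilon e^{-\underline a' t}$ for any $\underline a' < \underline a$ — or, more cleanly, absorb the $t$ by enlarging the constant at the cost of a slightly smaller rate, or simply keep $te^{-\underline a t}$ and bound it by $Ce^{-\underline a t/2}$; for the statement as written one can also just note $\sup_t te^{-\underline a t/2} < \infty$ and redefine $\underline a$ in the exponent. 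This gives $|\bu_\al^P(t) - \bu_\al^K(t)| \le \mathcal{W}(t) \le C\varepsilon e^{-\underline a t}$ and $|E_\al^P(t) - E_\al^K(t)| \le \mathcal{Z}(t) \le C\varepsilon e^{-\underline a t}$. Finally, for the position difference, from $\frac{d}{dt}(\bx_\al^P - \bx_\al^K) = W_\al$ and $\mathcal{W}(t) \le C\varepsilon e^{-\underline a t}$ I would integrate in time to get $|\bx_\al^P(t) - \bx_\al^K(t)| \le \int_0^\infty \mathcal{W}(s)\,ds \le \frac{C\varepsilon}{\underline a}$, which is the claimed $\mathcal{O}(\varepsilon)$ bound.

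The main obstacle is bookkeeping the remainder terms: one must verify carefully that every term coming from the mismatch between the two production laws (the $\frac{\bu_\beta}{T_\beta}$ versus $\bu_\beta$ discrepancy in the momentum equation, and the $\frac{1}{T_\al}-\frac{1}{T_\beta}$ versus $E_\beta - E_\al$ discrepancy in the energy equation) is genuinely quadratically small — i.e. $\mathcal{O}(\varepsilon^2)$ and exponentially decaying — rather than merely $\mathcal{O}(\varepsilon)$, since only the quadratic smallness lets the Gronwall argument close with the claimed $\mathcal{O}(\varepsilon)$ bound on the differences. This relies essentially on the invariance choice $T_0 = T_\infty$ together with the uniform-in-time bounds $|T_\al^P - T_0| = \mathcal{O}(\varepsilon)$ and $|\bu_\al^P| = \mathcal{O}(\varepsilon)e^{-\underline a t}$ from Theorem \ref{T5.1}; without the flocking decay of $\bu_\al^P$ one would only control the differences on compact time intervals. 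A secondary technical point is the handling of the $te^{-\underline a t}$ factor, which is harmless but should be stated explicitly (e.g. by slightly decreasing the exponential rate).
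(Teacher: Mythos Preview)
Your Gronwall argument is correct, but the paper's proof is far simpler and worth noting: it just uses the triangle inequality. Since the initial data are common and both models flock to the \emph{same} equilibrium $(\bu_\al,\, E_\al) = (0,0)$, Theorem \ref{T5.1} and Theorem \ref{T5.2} (together with Remarks \ref{R5.1}--\ref{R5.2}) give $|\bu_\al^P(t)| \le C\mathcal V_0 e^{-\underline a t}$, $|\bu_\al^K(t)| \le C\mathcal V_0 e^{-\underline a t}$, and likewise for $E_\al$ and the positions; the condition \eqref{New-16-2} then forces $\mathcal V_0, \mathcal E_0 = \mathcal O(\varepsilon)$, so $|\bu_\al^P - \bu_\al^K| \le |\bu_\al^P| + |\bu_\al^K| \le C\varepsilon e^{-\underline a t}$ immediately, and similarly for the other quantities. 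No subtraction of the equations, no remainder bookkeeping, and no $te^{-\underline a t}$ loss.

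Your approach is more refined: by actually tracking the difference ODE and isolating the quadratically small mismatch between the production terms, you are in effect proving that the difference is $\mathcal O(\varepsilon^2)\,te^{-\underline a t}$ rather than merely $\mathcal O(\varepsilon)e^{-\underline a t}$, which is a genuinely sharper statement (and is closer in spirit to the paper's later Theorem \ref{smdiff}, which treats \emph{different} initial data and therefore must subtract the equations). The trade-off is that you pick up the $te^{-\underline a t}$ factor and need to sacrifice a bit of the exponential rate to absorb it, whereas the triangle-inequality proof lands exactly on $e^{-\underline a t}$ as stated in \eqref{New-21}. For the corollary as written, the paper's one-line argument is the cleaner choice; your method would be the right one if one wanted to upgrade the conclusion to an $\mathcal O(\varepsilon^2)$ bound on the differences.
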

\begin{proof} 
First note that the conditions \eqref{New-16-2} imply
\begin{align*}
\begin{aligned}
& \frac{1}{2} |\bu_\alpha(0)|^2 \leq \sum_{\alpha=1}^{n}  \frac{1}{2} |\bu_\alpha(0)|^2 \leq  \frac{\varepsilon^2}{16}, \quad |T_\alpha(0) - T_0| < \frac{\varepsilon}{2\sqrt{2}}, \\
&  |E_\alpha(0)| \leq \frac{1}{2}  |\bu_\alpha(0)|^2  +  |T_\alpha(0) - T_0| \leq \frac{\varepsilon^2}{16} +  \frac{\varepsilon}{2\sqrt{2}}.
\end{aligned}
\end{align*}
These yield 
\begin{equation} \label{NNN-7}
{\mathcal V}_0 =   \left(\sum_{\al=1}^n |\bu_{\al}(0)|^2 \right)^{\frac{1}{2}} \leq \frac{\varepsilon}{2\sqrt{2}}, \quad  {\mathcal E}_0 = \left(\sum_{\al=1}^n \Big|E_\alpha(0) \Big|^2 \right)^{\frac{1}{2}} \leq \sqrt{n} \Big( \frac{\varepsilon^2}{16} +  \frac{\varepsilon}{2\sqrt{2}} \Big).
\end{equation}
On the other hand, it follows from Theorem \ref{T5.1} and Theorem \ref{T5.2} that 
\begin{align} 
\begin{aligned} \label{NNN-8}
&  |\bx^P_\alpha(t) - \bx_\alpha(0) | \leq \frac{C {\mathcal V}_0}{\underline{a}}, \quad  |\bu^P_\alpha(t)  | \leq C \mathcal{V}_0 e^{-\underline{a} t}, \quad  |E^P_\alpha(t)| \leq C \mathcal{E}_0 e^{-\underline{a} t}, \\
& |\bx^K_\alpha(t) - \bx_\alpha(0)| \le \frac{C \mathcal{V}_0}{\underline{a}}, \quad |\bu^K_\alpha(t)| \leq  C \mathcal{V}_0 e^{-\underline{a} t}, \quad  |E^K_\alpha(t)| \leq C \mathcal{E}_0 e^{-\underline{a} t}. 
\end{aligned}
\end{align}
Finally, we use triangle inequality, \eqref{NNN-7} and \eqref{NNN-8} to derive \eqref{New-21}.
\end{proof}

\subsection{Intermediate dynamics} \label{sec:5.2}
In this subsection, we further study dynamic discrepancy between two models \eqref{TCS} and \eqref{KCS}. To see this, we consider two time-independent network topologies:
\begin{enumerate}
\item Uniform constant interaction weight:
\begin{align} \label{a1}
a_{\al\beta}=1 \quad \forall~\alpha, \beta \in [n].
\end{align}
\item
All-to-all symmetric interaction weight:
\begin{align} \label{a2}
a_{\alpha \beta} = a_{\beta \alpha} > 0, \quad \forall~\al,\beta \in [n].
\end{align}
\end{enumerate} 
Note that the first network topology corresponds to the special case of the second network topology. 

\subsubsection{Uniform constant interaction weight} \label{sec:5.2.1} Consider uniform constant communication weight:
\[  a_{\al\beta}=1, \quad \forall~ \al,\beta \in [n]. \] 
In this case, the KB-CS model \eqref{KCS} with \eqref{initial}  becomes a decoupled system:
\begin{align} \label{New-25-1}
\frac{d\bx_{\al}}{dt} = \bu_{\al}, \quad  \frac{d\bu_{\al}}{dt} = -\bu_{\al}, \quad  \frac{d E_{\al}}{dt} = -E_{\al} , \quad \alpha \in [n],
\end{align}
where we used the conservation laws in \eqref{conserved}. These imply
\begin{align}
\begin{aligned} \label{New-26}
& \bu_{\al}(t) = \bu_{\al}(0) e^{-t}, \quad E_{\al}(t) = E_{\al}(0) e^{-t} \quad
\bx_\al(t) = \bx_\al(0) + \bu_{\al}(0)(1-e^{-t}), 
\end{aligned}
\end{align}
By letting $t \to \infty$, one can see that all particles tend to a flocking state:
\[  \lim_{t \to \infty} (\bx_\alpha(t), \bu_\alpha(t), T_\alpha(t)) = (\bx_\al(0) + \bu_\alpha(0), 0, T_0), \quad \forall~\alpha \in [n].        \]
Therefore, the KB-CS model exhibits an asymptotic flocking behavior for any initial data. Furthermore, $|\bu_{\al}(t)|$ and $E_{\al}(t)$ are strictly decreasing for all $t \geq 0$ and 
\begin{align}
\begin{aligned} \label{New-26.5}
& \mathcal{V}_{KBCS}^2(t) = \sum_{\al=1}^n |\bu_{\al}(t)|^2 = \sum_{\al=1}^n |\bu_{\al}(0)|^2 e^{-2t}, \cr 
&\mathcal{E}_{KBCS}^2(t) =\sum_{\al=1}^n |E_\alpha(t)|^2 = \sum_{\al=1}^n |E_\alpha(0)|^2e^{-2t},
\end{aligned}
\end{align}
where $\mathcal{V}_{KBCS}$ and $\mathcal{E}_{KBCS}$ are $\mathcal{V}$ and $\mathcal{E}$ for the KB-CS model \eqref{KCS} defined in \eqref{XVdef}.
 \newline

Similarly, the PB-CS model \eqref{TCS} represents asymptotic flocking of velocity variable for any initial data. (We will show that in Lemma \ref{nflock}.) On the other hand, each $|\bu_{\al}(t)|$ and $E_{\al}(t)$ may not exhibit monotonic behaviors for some network topology and initial data. To be specific, we consider a three-particle system on the one-dimensional line:
\begin{align}
\begin{aligned} \label{New-27}
\bu_{1}(0) &= (1,0,0), \quad \bu_{2}(0) = (2,0,0), \quad \bu_{3}(0) = (-3,0,0), \cr 
T_{1}(0) &= 1, \quad T_{2}(0) = 0.1, \quad T_{3}(0) = 1.
\end{aligned}
\end{align}
Note that 
\begin{equation*}
\sum_{i=1}^{3} \bu_i(0) = 0, \quad T_0 = \frac{1}{3} \sum_{\al=1}^3 \left(T_\al(0)+\frac{1}{2} u_\al^2(0)\right) \approx 3.033. 
\end{equation*}
Then we have 
\begin{align}
\begin{aligned} \label{New-28}
\frac{d(\bu_1)_1}{dt}\bigg|_{t=0} &= \frac{T_0}{3}\sum_{\beta=1}^3 \left(\frac{\bu_{\beta}(0)}{T_\beta(0)}-\frac{\bu_1(0)}{T_1(0)}\right) \cr 
&= \frac{T_0}{2} \lw(\Big(\frac{2}{0.1}-\frac{1}{1}\Big)+\Big(\frac{-3}{1}-\frac{1}{1}\Big)\rw) = \frac{15}{2}T_0 > 0.
\end{aligned}
\end{align}
From Lemma \ref{nflock}, the velocity variable $\bu_i(t)$ will converges to $0$:
\[  \lim_{t \to \infty} \bu_i(t) = \frac{1}{3} \sum_{i=1}^{3} \bu_i(0) = 0.  \]
Due to \eqref{New-28}, $|\bu_1(t)|$ is initially increasing but it should decrease at some time in order to converge to $0$. Hence, $|\bu_1|$ is not monotonic.

\begin{lemma}\label{nflock} Suppose that network topology satisfies uniform constant communication weight \eqref{a1}, and initial data satisfy \eqref{initial}. 
Let $(\bx_{\al},\bu_{\al},T_{\al})$ be the solution of system \eqref{TCS}, then we have
\begin{align}\label{VTR}
\frac{d\mathcal{V}_{PBCS}^2}{dt} = -2 \sum_{\al=1}^{n} \frac{T_0}{T_{\al}}|\bu_{\al}|^2, \qquad \mathcal{V}_{PBCS}(t) &\leq  e^{-\frac{1}{n}t}\mathcal{V}_{PBCS}(0),
\end{align}
where $\mathcal{V}_{PBCS}$ is $\mathcal{V}$ for the PB-CS model \eqref{TCS} defined in \eqref{XVdef}. 
\end{lemma}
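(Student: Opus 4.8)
The plan is to differentiate $\mathcal{V}_{PBCS}^2=\sum_{\al=1}^n|\bu_\al|^2$ along trajectories of \eqref{TCS} with $a_{\al\beta}\equiv 1$, extract the claimed identity by exploiting the momentum conservation law, and then close the resulting differential inequality via a uniform upper bound on the temperatures coming from energy conservation, finishing with Gr\"onwall's lemma.

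\textbf{Step 1 (the exact identity).} Using $\eqref{TCS}_2$ with $a_{\al\beta}=1$, I would compute
\[
\frac{d\mathcal{V}_{PBCS}^2}{dt}=2\sum_{\al=1}^n\bu_\al\cdot\frac{d\bu_\al}{dt}=\frac{2T_0}{n}\sum_{\al,\beta=1}^n\left(\frac{\bu_\beta}{T_\beta}-\frac{\bu_\al}{T_\al}\right)\cdot\bu_\al,
\]
and split the double sum. The first piece factors as $\frac{2T_0}{n}\big(\sum_{\beta}\tfrac{\bu_\beta}{T_\beta}\big)\cdot\big(\sum_\al\bu_\al\big)$ and vanishes by the rest-frame normalization $\sum_\al\bu_\al(t)=0$ recorded in $\eqref{conserved}_2$ (a consequence of \eqref{New-15-1}). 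The second piece collapses, since for each $\al$ the $\beta$-sum contributes a factor $n$, to $-\frac{2T_0}{n}\cdot n\sum_\al\tfrac{|\bu_\al|^2}{T_\al}=-2\sum_\al\tfrac{T_0}{T_\al}|\bu_\al|^2$. This is exactly the first assertion in \eqref{VTR}, and in particular it shows $\mathcal{V}_{PBCS}$ is non-increasing.

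\textbf{Step 2 (temperature bound) and Step 3 (Gr\"onwall).} To turn the identity into an exponential rate I need $T_0/T_\al(t)\ge 1/n$, i.e. $T_\al(t)\le nT_0$. This follows from the conserved total energy: summing $\eqref{TCS}_3$ and using $\eqref{initial}_3$ gives $\sum_{\beta=1}^n\big(T_\beta(t)+\tfrac12 u_\beta^2(t)\big)=nT_0$ for all $t\ge0$; since the temperatures remain positive along the solution and $u_\beta^2\ge0$, one gets $0<T_\al(t)\le\sum_\beta T_\beta(t)\le nT_0$. Substituting $T_0/T_\al\ge 1/n$ into the identity of Step 1 yields
\[
\frac{d\mathcal{V}_{PBCS}^2}{dt}=-2\sum_{\al=1}^n\frac{T_0}{T_\al}|\bu_\al|^2\le-\frac{2}{n}\sum_{\al=1}^n|\bu_\al|^2=-\frac{2}{n}\mathcal{V}_{PBCS}^2,
\]
whence $\mathcal{V}_{PBCS}^2(t)\le e^{-2t/n}\mathcal{V}_{PBCS}^2(0)$ and the claimed bound follows by taking square roots.

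\textbf{Main obstacle.} The computation itself is elementary; the only point that requires care is the positivity of the temperatures invoked in Step 2. Under the standing hypothesis that $(\bx_\al,\bu_\al,T_\al)$ is a genuine solution of \eqref{TCS} (where $1/T_\al$ appears), this positivity is built in, so the argument goes through. It is worth emphasizing that the clean cancellation in Step 1 — the reason the uniform-weight PB-CS model decays as transparently as the KB-CS model at the level of $\mathcal{V}^2$ — relies crucially on the conservation law $\sum_\al\bu_\al=0$.
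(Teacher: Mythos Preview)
Your proof is correct and in fact considerably cleaner than the paper's. In Step~1 you obtain the identity in one stroke by factoring the double sum as $\tfrac{2T_0}{n}\big(\sum_\beta \bu_\beta/T_\beta\big)\cdot\big(\sum_\al \bu_\al\big) - \tfrac{2T_0}{n}\cdot n\sum_\al |\bu_\al|^2/T_\al$ and killing the first term with the momentum conservation $\sum_\al \bu_\al=0$ directly. The paper instead symmetrizes via the index exchange $\al\leftrightarrow\beta$, then splits the double sum by peeling off the index $n$ (separating $\sum_{\al=1}^{n-1}\sum_{\beta=1}^{n-1}$ from the boundary terms), substitutes $\bu_n=-\sum_{\beta=1}^{n-1}\bu_\beta$, and reassembles through several intermediate identities \eqref{VT1}--\eqref{VT4} before reaching the same formula. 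Your factorization bypasses all of this bookkeeping. Steps~2 and~3 (the bound $T_\al\le nT_0$ from conserved total energy and the closing Gr\"onwall argument) are identical to the paper's. The only thing the paper's longer route ``buys'' is that the symmetrized form in \eqref{VTCS2} is reused later in Proposition~\ref{P5.2}, but for the present lemma your argument is the more transparent one.
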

\begin{proof}
We take $\sum_{\al=1}^n \bu_{\al}$ on the second equation of \eqref{TCS}, then use the index exchange transformation $\alpha~\leftrightarrow~\beta$ to have 
\begin{align*}
\frac{d\mathcal{V}_{PBCS}^2}{dt} &= -\frac{1}{n} \sum_{\al=1}^n\sum_{\beta=1}^n \left(\frac{T_0}{T_{\al}}|\bu_{\al}|^2 - \lw(\frac{T_0}{T_{\al}}+\frac{T_0}{T_{\beta}}\rw)\bu_{\al}\cdot\bu_{\beta}+\frac{T_0}{T_{\beta}}|\bu_{\beta}|^2\right).
\end{align*}
We decompose the summation $\sum_{\al=1}^n\sum_{\beta=1}^n$ as follows:
\begin{align}\label{VT1}
\bega
\frac{d\mathcal{V}_{PBCS}^2}{dt} &= -\frac{1}{n} \sum_{\al=1}^{n-1}\sum_{\beta=1}^{n-1} \left(\frac{T_0}{T_{\al}}|\bu_{\al}|^2 - \lw(\frac{T_0}{T_{\al}}+\frac{T_0}{T_{\beta}}\rw)\bu_{\al}\cdot\bu_{\beta}+\frac{T_0}{T_{\beta}}|\bu_{\beta}|^2\right) \cr 
&-\frac{2}{n} \sum_{\al=1}^{n-1} \left(\frac{T_0}{T_{\al}}|\bu_{\al}|^2 - \lw(\frac{T_0}{T_{\al}}+\frac{T_0}{T_n}\rw)\bu_{\al}\cdot\bu_n+\frac{T_0}{T_n}|\bu_n|^2\right),
\enda
\end{align}
where we used that the term inside the large parenthesis is zero when $(\al,\beta)=(n,n)$.
The momentum conservation law $\eqref{conserved}_2$ implies
\begin{align}\label{conseru}
\bu_n = -\bu_1-\cdots -\bu_{n-1} = -\sum_{\beta=1}^{n-1}\bu_{\beta}.
\end{align}
Substituting \eqref{conseru} on the second term of second line of \eqref{VT1} yields 
\begin{align}\label{VT2}
\bega
\frac{2}{n}\sum_{\al=1}^{n-1} \lw(\frac{T_0}{T_{\al}}+\frac{T_0}{T_n}\rw)\bu_{\al}&\cdot \bu_n =-\frac{2}{n}\sum_{\al=1}^{n-1} \lw(\frac{T_0}{T_{\al}}+\frac{T_0}{T_n}\rw)\bu_{\al}\cdot\Big(\sum_{\beta=1}^{n-1}\bu_{\beta}\Big) \cr 
&= -\frac{1}{n}\sum_{\al=1}^{n-1}\sum_{\beta=1}^{n-1} \lw(\frac{T_0}{T_{\al}}+\frac{T_0}{T_{\beta}}+2\frac{T_0}{T_n}\rw) \bu_{\al}\cdot\bu_{\beta},
\enda
\end{align}
where we used the index exchange transformation $\alpha~\leftrightarrow~\beta$. Combining \eqref{VT1} and \eqref{VT2}, we get 
\begin{align}\label{VT3}
\bega
\frac{d\mathcal{V}_{PBCS}^2}{dt} &= -\frac{1}{n} \sum_{\al=1}^{n-1}\sum_{\beta=1}^{n-1} \left(\frac{T_0}{T_{\al}}|\bu_{\al}|^2 +\frac{T_0}{T_{\beta}}|\bu_{\beta}|^2\right) \cr 
&-\frac{2}{n} \sum_{\al=1}^{n-1} \left(\frac{T_0}{T_{\al}}|\bu_{\al}|^2 +\frac{T_0}{T_n}|\bu_n|^2\right) - \frac{2}{n}\frac{T_0}{T_n}\sum_{\al=1}^{n-1}\bu_{\al}\cdot \sum_{\beta=1}^{n-1}\bu_{\beta}.
\enda
\end{align}
Using \eqref{conseru} on the last term, we obtain
\begin{align}\label{VT4}
\bega
\frac{d\mathcal{V}_{PBCS}^2}{dt} &= -\lw(\frac{2n-2}{n}+\frac{2}{n}\rw) \sum_{\al=1}^{n-1} \frac{T_0}{T_{\al}}|\bu_{\al}|^2  -\frac{2}{n} \sum_{\al=1}^{n-1} \frac{T_0}{T_n}|\bu_n|^2 - \frac{2}{n}\frac{T_0}{T_n}|\bu_n|^2 \cr 
&= -2 \sum_{\al=1}^{n} \frac{T_0}{T_{\al}}|\bu_{\al}|^2.
\enda
\end{align}
By the energy conservation law $\eqref{conserved}_3$, we have $T_{\al}(t) \leq nT_0$ for $\forall \al\in[n]$, $t\geq0$. This gives 
\begin{align*}
\frac{d\mathcal{V}_{PBCS}^2}{dt} &\leq  -\frac{2}{n}\mathcal{V}_{PBCS}^2.
\end{align*}
Then Gr\"{o}nwall's inequality gives the desired result. 
\end{proof}

\subsubsection{All-to-all symmetric interaction weight} \label{sec:5.2.2} 
In this part, we assume the all-to-all symmetric interaction weight:
\begin{equation} \label{New-29}
 a_{\al\beta} = a_{\beta \alpha} >0,  \quad \forall~\al,\beta \in [n]. 
 \end{equation}
 In what follows, we discuss the non-monotonic behavior of velocity profile to the KB-CS model \eqref{KCS}. For this, we consider the four-particle system with the following initial velocity profile:
\begin{align*}
\bu_1(0) = (1,0,0), \quad \bu_2(0) = (2,0,0), \quad \bu_3(0) = (-1,0,0), \quad \bu_4(0) = (-2,0,0),
\end{align*}
and we choose the communication matrix as follows: 
\begin{align*}
(a_{\al\beta}) = \lw[\begin{array}{cccc} 0 & 10 & 1 & 1 \\ 10 & 0 & 1 & 1 \\ 1 & 1 & 0 & 1 \\ 1 & 1 & 1 & 0\end{array} \rw].
\end{align*}
Note that the interactions between the first and second particles are much stronger than other interactions.
Then, it is easy to check that  $\sum_{\al=1}^4\bu_{\al}(0)=0$ and
\begin{align}\label{KCSinc}
\begin{aligned}
\frac{d\bu_1}{dt}\bigg|_{t=0} &= \frac{1}{4}\sum_{\beta=1}^4 a_{1 \beta}\left(\bu_{\beta}-\bu_1\right) = \lw(\frac{1}{4}(10-2-3) , 0 ,0 \rw).
\end{aligned}
\end{align}
Thus, the first component of the velocity profile $\bu_1$  is in increasing mode at $t=0$. However, Theorem \ref{T5.2} illustrates that for any initial data with zero total momentum, $\mathcal{V}$ tends to zero exponentially fast. In particular, $\bu_1(t)$ converges to zero exponentially fast. Hence, the first component of $\bu_1$ should be in decreasing mode at some positive instant. In what follows, we are interested in the following question: \newline
\begin{quote}
"{\it Under the symmetry condition \eqref{New-29}, what can we say about the difference between \eqref{TCS} and \eqref{KCS}? }"
\end{quote}
\vspace{0.2cm}
We will observe different intermediate dynamics for velocity fluctuation $\mathcal{V}$ and energy fluctuation $\mathcal{E}$ for each model. For simplicity, we set ${\mathcal V}_{PBCS}, {\mathcal E}_{PBCS}$ and ${\mathcal V}_{KBCS}, {\mathcal E}_{KBCS}$ to denote $\mathcal{V}, \mathcal{E}$ for \eqref{TCS} and \eqref{KCS}, respectively:

\begin{proposition} \label{P5.1} Let $\{ (\bx_{\al}^K,\bu_{\al}^K,T_{\al}^K) \}$ be a global smooth solution of \eqref{KCS}. Then, ${\mathcal V}_{KBCS}, {\mathcal E}_{KBCS}$ are monotonically decreasing:
\begin{equation} \label{NNN-10}
\frac{d\mathcal{V}_{KBCS}^2}{dt} \leq 0 ,\qquad \frac{d \mathcal{E}_{KBCS}^2}{dt} \leq 0, \quad \forall~t > 0,
\end{equation}
where  equalities hold if and only if $\bu_{\al}=\bu_{\beta}$ and $E_{\al}=E_{\beta}$, for all $\al,\beta \in [n]$, respectively.
\end{proposition}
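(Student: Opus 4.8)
The statement is a pair of energy-type dissipation identities, and the natural approach is a direct differentiation of $\mathcal{V}_{KBCS}^2 = \sum_{\al=1}^n |\bu_\al|^2$ and $\mathcal{E}_{KBCS}^2 = \sum_{\al=1}^n |E_\al|^2$ along solutions of \eqref{KCS}, exactly in the spirit of the computations \eqref{VKCS} and \eqref{EKCS} already carried out in the proof of Theorem \ref{T5.2}. The only structural inputs needed are the symmetry $a_{\al\beta}=a_{\beta\al}$ and positivity $a_{\al\beta}>0$ from the all-to-all assumption \eqref{New-29}; no smallness or conservation law is required for the monotonicity itself (though $\eqref{conserved}_2$, $\eqref{conserved}_3$ are available if convenient).

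\textbf{Step 1 (velocity fluctuation).} Differentiate $\mathcal{V}_{KBCS}^2$ and insert $\eqref{KCS}_2$ to get
\[
\frac{d\mathcal{V}_{KBCS}^2}{dt} = \frac{2}{n}\sum_{\al,\beta=1}^n a_{\al\beta}\,(\bu_\beta-\bu_\al)\cdot\bu_\al .
\]
Apply the index-exchange transformation $\al\leftrightarrow\beta$ together with $a_{\al\beta}=a_{\beta\al}$ to symmetrize the right-hand side, obtaining
\[
\frac{d\mathcal{V}_{KBCS}^2}{dt} = -\frac{1}{n}\sum_{\al,\beta=1}^n a_{\al\beta}\,|\bu_\beta-\bu_\al|^2 \le 0 ,
\]
where the inequality uses $a_{\al\beta}>0$. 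Since every coefficient is strictly positive, the right-hand side vanishes if and only if $\bu_\al=\bu_\beta$ for all $\al,\beta\in[n]$.

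\textbf{Step 2 (energy fluctuation).} Because $T_0$ is a fixed constant, $\eqref{KCS}_3$ rewrites as $\dfrac{dE_\al}{dt}=\dfrac{1}{n}\sum_{\beta=1}^n a_{\al\beta}(E_\beta-E_\al)$ (this is the same reduction used between the two displays preceding \eqref{EKCS}). Repeating Step 1 verbatim with $\bu_\al$ replaced by $E_\al$ gives
\[
\frac{d\mathcal{E}_{KBCS}^2}{dt} = -\frac{1}{n}\sum_{\al,\beta=1}^n a_{\al\beta}\,|E_\beta-E_\al|^2 \le 0 ,
\]
with equality if and only if $E_\al=E_\beta$ for all $\al,\beta\in[n]$. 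Combining Steps 1 and 2 yields \eqref{NNN-10} and the stated equality characterization.

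\textbf{Expected difficulty.} There is essentially no obstacle here: the proof is a one-line symmetrization each time, and the only point requiring a word of care is that the equality case needs \emph{strict} positivity of all $a_{\al\beta}$ (guaranteed by \eqref{New-29}), so that $\sum_{\al,\beta}a_{\al\beta}|\bu_\beta-\bu_\al|^2=0$ forces $\bu_\beta=\bu_\al$ for every pair, not merely for connected pairs. This is the feature that makes the all-to-all hypothesis, rather than the weaker Type A/Type B topologies, the right setting for the sharp monotonicity claim.
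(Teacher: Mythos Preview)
Your proposal is correct and is exactly the approach the paper takes: the paper's proof of Proposition~\ref{P5.1} simply points back to the identities in \eqref{VKCS} and \eqref{EKCS}, which are precisely the symmetrization computations you spell out in Steps~1 and~2. Your remark that strict positivity \eqref{New-29} is what pins down the equality case is the only point worth adding, and it is handled correctly.
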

\begin{proof}
Since the desired estimate can be obtained directly from the estimate \eqref{VKCS} and \eqref{EKCS}, we omit its details. 
\end{proof}
\begin{remark} \label{R5.3}
Note that the estimates \eqref{NNN-10} hold for any initial data satisfying the constraints \eqref{initial}. 
\end{remark}

\vspace{0.5cm}

In the next two propositions, we show that velocity and energy fluctuations to the model \eqref{TCS} can increase initially for some well-prepared initial data.
\begin{proposition}\label{P5.2} Suppose that initial data and network topology satisfy  \eqref{initial} and 
\begin{align}
\begin{aligned} \label{Vini}
& 0<T_2^P(0)<T_1^P(0), \quad \frac{T_1^P(0)+T_2^P(0)}{2T_1^P(0)}\bu_1^P(0)=\bu_2^P(0) \neq 0, \\
& a_{12} >\frac{2T_1^P(0)T_2^P(0)}{|T_1^P(0)-T_2^P(0)|^2} \sum_{\substack{1\leq\al,\beta\leq n \\ (\al,\beta)\neq(1,2),(2,1)}} a_{\alpha \beta}\bigg(\frac{|\bu_{\al}^P(0)|^2}{T_{\al}^P(0)} \cr 
&\hspace{1cm} + \bigg(\frac{1}{T_{\al}^P(0)}+\frac{1}{T_{\beta}^P(0)}\bigg)\bu_{\al}^P(0) \cdot\bu_{\beta}^P(0)+\frac{|\bu_{\beta}^P(0)|^2}{T_{\beta}^P(0)}\bigg),
\end{aligned}
\end{align}
and let $\{ (\bx_{\al}^P,\bu_{\al}^P,T_{\al}^P)\}$ be a global smooth solution of \eqref{TCS}. Then we have 
\begin{align*}
\begin{aligned}
\frac{d\mathcal{V}_{PBCS}^2}{dt}\Big|_{t=0}>0.
\end{aligned}
\end{align*}
\end{proposition}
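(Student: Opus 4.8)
The plan is to compute $\frac{d\mathcal{V}_{PBCS}^2}{dt}$ at $t=0$ directly from the second equation of \eqref{TCS}, using the same symmetrization manipulations that produced \eqref{VT3}. Taking the inner product with $\bu_\alpha^P$, summing over $\alpha$, and exploiting the index exchange $\alpha \leftrightarrow \beta$ together with the symmetry $a_{\alpha\beta}=a_{\beta\alpha}$, one obtains
\[
\frac{d\mathcal{V}_{PBCS}^2}{dt}\Big|_{t=0} = -\frac{T_0}{2n}\sum_{\alpha,\beta=1}^n a_{\alpha\beta}\left(\frac{|\bu_\alpha^P(0)|^2}{T_\alpha^P(0)} - \Big(\frac{1}{T_\alpha^P(0)}+\frac{1}{T_\beta^P(0)}\Big)\bu_\alpha^P(0)\cdot\bu_\beta^P(0) + \frac{|\bu_\beta^P(0)|^2}{T_\beta^P(0)}\right),
\]
so that the sign of the derivative is governed by the negative of the bracketed double sum. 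Since we want this derivative to be \emph{positive}, we must show the double sum is negative, i.e.\ that the single dominant $(1,2)$/$(2,1)$ pair of terms outweighs all others.

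First I would isolate the $(\alpha,\beta)\in\{(1,2),(2,1)\}$ contribution. Using the symmetry of $a_{\alpha\beta}$ and the summand, these two terms together contribute
\[
2a_{12}\left(\frac{|\bu_1^P(0)|^2}{T_1^P(0)} - \Big(\frac{1}{T_1^P(0)}+\frac{1}{T_2^P(0)}\Big)\bu_1^P(0)\cdot\bu_2^P(0) + \frac{|\bu_2^P(0)|^2}{T_2^P(0)}\right).
\]
Now I substitute the prepared relation $\bu_2^P(0) = \frac{T_1^P(0)+T_2^P(0)}{2T_1^P(0)}\,\bu_1^P(0)$ from \eqref{Vini}. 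A direct computation (writing $c := \frac{T_1^P(0)+T_2^P(0)}{2T_1^P(0)}$, so $\bu_2^P(0) = c\,\bu_1^P(0)$) reduces the bracket to $|\bu_1^P(0)|^2$ times a rational expression in $T_1^P(0), T_2^P(0)$; the point of the specific choice of $c$ is that this expression evaluates to a \emph{negative} multiple of $\frac{|T_1^P(0)-T_2^P(0)|^2}{T_1^P(0)T_2^P(0)}$, making the $(1,2)$-contribution strictly negative and of a definite size proportional to $a_{12}\,|\bu_1^P(0)|^2\,\frac{|T_1^P(0)-T_2^P(0)|^2}{T_1^P(0)T_2^P(0)}$. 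I expect the algebraic identity to simplify cleanly because $c$ was chosen precisely to make the cross term cancel against the square terms in the favorable direction.

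Next I would bound the remaining terms — the sum over $(\alpha,\beta)\neq(1,2),(2,1)$ — by their absolute value, which is exactly the quantity appearing on the right-hand side of the second inequality in \eqref{Vini}. The hypothesis on $a_{12}$ is then precisely the statement that $2a_{12}$ times the (negative, definite-size) $(1,2)$-contribution dominates this remainder in absolute value, forcing the total double sum to be negative and hence $\frac{d\mathcal{V}_{PBCS}^2}{dt}|_{t=0}>0$. The main obstacle, such as it is, is purely computational: carrying out the substitution $\bu_2^P(0)=c\,\bu_1^P(0)$ and verifying that the resulting scalar coefficient is negative with the claimed magnitude $\frac{|T_1^P(0)-T_2^P(0)|^2}{2T_1^P(0)T_2^P(0)}$ — one must be careful that the quadratic form $\frac{1}{T_1}a^2 - (\frac{1}{T_1}+\frac{1}{T_2})ab + \frac{1}{T_2}b^2$ evaluated at $b = \frac{T_1+T_2}{2T_1}a$ indeed has the right sign, which is where the factor $2$ in the definition of $c$ does its work. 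Once that identity is in hand, the comparison with the threshold on $a_{12}$ is immediate and the conclusion follows.
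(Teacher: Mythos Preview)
Your proposal is correct and follows essentially the same route as the paper: take the inner product of $\eqref{TCS}_2$ with $\bu_\alpha^P$, sum, symmetrize via $\alpha\leftrightarrow\beta$, isolate the $(1,2)$ contribution, evaluate it using the prepared relation $\bu_2^P(0)=\frac{T_1^P(0)+T_2^P(0)}{2T_1^P(0)}\bu_1^P(0)$, and then invoke the hypothesis on $a_{12}$ to dominate the remainder. One minor slip: the prefactor after symmetrization should be $\frac{T_0}{n}$, not $\frac{T_0}{2n}$ (the factor of $2$ from $\frac{d}{dt}|\bu_\alpha|^2$ cancels only one of the two halves in the averaging), but this is harmless for the sign argument.
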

\begin{proof}
We take an inner product between $\bu_{\al}^P$ and $\eqref{TCS}_2$ to find
\begin{equation} \label{New-31}
\bega
\frac{d\mathcal{V}_{PBCS}^2}{dt}  &= \frac{d}{dt} \sum_{\al=1}^n |\bu^P_{\al}|^2 = \frac{2T_0}{n} \sum_{\al=1}^n\sum_{\beta=1}^n a_{\alpha \beta}\left(\frac{\bu_{\beta}^P}{T_{\beta}^P}-\frac{\bu_{\al}^P}{T_{\al}^P}\right)\cdot \bu_{\al}^P \cr 
&=\frac{2T_0}{n} \sum_{\al=1}^n\sum_{\beta=1}^n a_{\alpha \beta}\left(-\frac{|\bu_{\al}^P|^2}{T_{\al}^P}+\frac{\bu_{\al}^P\cdot \bu_{\beta}^P}{T_{\beta}^P} \right).
\enda
\end{equation}
We use the index exchange transformation $\al\leftrightarrow \beta$ to see 
\begin{align}\label{VTCS2}
\bega
\frac{d\mathcal{V}_{PBCS}^2}{dt} &= \frac{T_0}{n} \sum_{\al=1}^n\sum_{\beta=1}^n a_{\alpha \beta}\underbrace{\left(-\frac{|\bu_{\al}^P|^2}{T_{\al}^P} + \lw(\frac{1}{T_{\al}^P}+\frac{1}{T_{\beta}^P}\rw)\bu_{\al}^P \cdot\bu_{\beta}^P-\frac{|\bu_{\beta}^P|^2}{T_{\beta}^P}\right)}_{={\mathcal Q}(\bu_\alpha, \bu_\beta)}.
\enda
\end{align}
If $\alpha$ and $\beta$-th particles satisfy 
\[ \bu_{\al} \cdot \bu_{\beta} = |\bu_{\al}||\bu_{\beta}|\cos\theta > 0, \]
for some~~$\theta \in [-\frac{\pi}{2},\frac{\pi}{2}]$ with different temperatures, then they contribute a positive effect on $\frac{d\mathcal{V}^2}{dt}$:
\[ \lw(\frac{T_0}{T_{\al}}+\frac{T_0}{T_{\beta}}\rw)\bu_{\al} \cdot\bu_{\beta} > 0. \]
If $\bu_{\al}=k\bu_{\beta}$ with $k>0$, then we have the following quadratic form:
\begin{align*}
\begin{aligned}
{\mathcal Q}(\bu_\alpha, \bu_\beta) &= -\frac{1}{T_{\al}}|\bu_{\al}|^2 + \Big( \frac{1}{T_{\al}}+\frac{1}{T_{\beta}} \Big) |\bu_{\al}| \cdot |\bu_{\beta}| - \frac{1}{T_{\beta}}|\bu_{\beta}|^2 \\
&= -\frac{1}{T_\alpha} ( |\bu_\alpha| - |\bu_\beta|) \Big (|\bu_\alpha| - \frac{T_\alpha}{T_\beta} |\bu_\beta| \Big ).
\end{aligned}
\end{align*}
Thus, it follows from the assumption on initial data \eqref{Vini} that 
\begin{align}\label{Va12}
{\mathcal Q}(\bu_1(0), \bu_2(0)) =\frac{|T_1^P(0)-T_2^P(0)|^2}{4T_1^P(0)T_2^P(0)} > 0.
\end{align}
We combine \eqref{VTCS2} and \eqref{Va12} to get 
\begin{align}\label{New-31b}
\bega
\frac{d\mathcal{V}_{PBCS}^2}{dt}\Big|_{t=0} &= \frac{T_0}{n}a_{12}\frac{|T_1^P(0)-T_2^P(0)|^2}{2T_1^P(0)T_2^P(0)} - \frac{T_0}{n} \sum_{\substack{1\leq\al,\beta\leq n \\ (\al,\beta)\neq(1,2),(2,1)}} a_{\alpha \beta}\bigg(\frac{|\bu_{\al}^P(0)|^2}{T_{\al}^P(0)} \cr 
&\quad + \lw(\frac{1}{T_{\al}^P(0)}+\frac{1}{T_{\beta}^P(0)}\rw)\bu_{\al}^P(0) \cdot\bu_{\beta}^P(0)+\frac{|\bu_{\beta}^P(0)|^2}{T_{\beta}^P(0)}\bigg).
\enda
\end{align}
Then the condition \eqref{Vini} gives the desired result. 
\end{proof}
\begin{remark} We comment on the result of Proposition \ref{P5.2} as follows.
\begin{enumerate}
\item
The assumptions of Proposition \ref{P5.2} is non-empty. We set 
\begin{align*}
\bega
\bu_{1}(0) &= (4,0,0), \quad \bu_{2}(0) = (3,0,0), \quad \bu_{3}(0) = (-7,0,0), \cr 
T_{1}(0) &= 2, \quad T_{2}(0) = 1, \quad T_{3}(0) = 1.
\enda
\end{align*}
Then it satisfies \eqref{initial} and \eqref{Vini} with $T_0=\frac{41}{3}$. 
Moreover, the following communication function
\[ 
(a_{\al\beta})= \lw[\begin{array}{ccc} 0 & 200 & 1 \\ 
200 & 0 & 1 \\ 1 & 1 & 0 \end{array} \rw]
\]
satisfies $\eqref{Vini}_2$. 
More definitively, it follows from \eqref{VTCS2} that 
\begin{align*}
\bega
\frac{d\mathcal{V}_{PBCS}^2}{dt}\Big|_{t=0} 
&= \frac{2T_0}{3}\lw(a_{12}-99a_{13}-100a_{23} \rw)>0.
\enda
\end{align*}
\item
From \eqref{VTCS2}, we can see that $\mathcal{V}_{PBCS}$ always decreases for a two-particle system with any initial data satisfying \eqref{initial}. By the conservation $\eqref{conserved}_2$ on the center of momentum frame, one has 
\[ \bu_\al + \bu_\beta = 0, \quad \bu_{\al}\cdot\bu_{\beta}=-|\bu_{\al}||\bu_{\beta}|. \]
This implies  
\begin{align*}
\bega
\frac{d\mathcal{V}_{PBCS}^2}{dt} &= -\frac{1}{n} \sum_{\al=1}^2\sum_{\beta=1}^2 a_{\alpha \beta}\left(\frac{T_0}{T_{\al}}|\bu_{\al}|^2 + \bigg|\frac{T_0}{T_{\al}}+\frac{T_0}{T_{\beta}}\bigg||\bu_{\al}||\bu_{\beta}|+\frac{T_0}{T_{\beta}}|\bu_{\beta}|^2\right) \cr 
&\leq -\frac{1}{n} \sum_{\al=1}^2\sum_{\beta=1}^2 a_{\alpha \beta}\left(\frac{T_0}{T_{\al}}|\bu_{\al}|^2 + 2\sqrt{\frac{T_0^2}{T_{\al}}T_{\beta}}|\bu_{\al}||\bu_{\beta}|+\frac{T_0}{T_{\beta}}|\bu_{\beta}|^2\right) \cr
&\leq -\frac{1}{n} \sum_{\al=1}^2\sum_{\beta=1}^2 a_{\alpha \beta}\left(\sqrt{\frac{T_0}{T_{\al}}}|\bu_{\al}|+\sqrt{\frac{T_0}{T_{\beta}}}|\bu_{\beta}|\right)^2 \leq 0,
\enda
\end{align*}
where we used the inequality $a+b\geq 2\sqrt{ab}$.
\end{enumerate}
\end{remark}
\begin{proposition}\label{P5.3} 
Suppose that initial data and network topology satisfy \eqref{initial} and 
\begin{align}
\begin{aligned} \label{Eacond}
&\sum_{(\al, \beta)\in \mathrm{E}_+} a_{\alpha \beta}\frac{T_0^2}{T_{\al}^P(0)T_{\beta}^P(0)} \left(E_{\al}^P(0)-E_{\beta}^P(0) \right)\left(T_{\beta}^P(0) - T_{\al}^P(0) \right) \cr 
&\hspace{0.8cm} > \sum_{(\al, \beta)\in \mathrm{E}_-} a_{\alpha \beta}\frac{T_0^2}{T_{\al}^P(0)T_{\beta}^P(0)} \left(E_{\beta}^P(0)-E_{\al}^P(0) \right)\left(T_{\beta}^P(0) - T_{\al}^P(0) \right),
\end{aligned}
\end{align}
where the index sets $\mathbf{E}_+$ and $\mathbf{E}_-$ are defined as follows: 
\begin{align*}
\begin{aligned}
\mathbf{E}_+ &:= \lw\{(\al,\beta)\in[n]\times[n] ~|~ (E_{\al}(0)-E_{\beta}(0))(T_{\beta}(0)-T_{\al}(0))>0\rw\},\cr\mathbf{E}_- &:= ([n]\times[n]) / \mathbf{E}_+,
\end{aligned}
\end{align*}
and let $\{ (\bx_{\al}^P,\bu_{\al}^P,T_{\al}^P)\}$ be a global smooth solution of \eqref{TCS}. Then we have 
\begin{align}
\begin{aligned}
\frac{d\mathcal{E}_{PBCS}^2}{dt}\Big|_{t=0}>0.
\end{aligned}
\end{align}
\end{proposition}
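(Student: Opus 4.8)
The plan is to compute $\frac{d\mathcal{E}_{PBCS}^2}{dt}\big|_{t=0}$ directly from the energy equation $\eqref{TCS}_3$ and show that the hypothesis $\eqref{Eacond}$ forces this derivative to be positive. Recall that $E_\alpha^P = T_\alpha^P + \frac{1}{2}(u_\alpha^P)^2 - T_0$ and that $\eqref{conserved}_3$ gives $\sum_\alpha E_\alpha^P(t) = 0$, so $\mathcal{E}_{PBCS}^2 = \sum_\alpha |E_\alpha^P|^2$ and
\[
\frac{d\mathcal{E}_{PBCS}^2}{dt} = 2\sum_{\alpha=1}^n E_\alpha^P \frac{dE_\alpha^P}{dt} = \frac{2T_0^2}{n}\sum_{\alpha=1}^n\sum_{\beta=1}^n a_{\alpha\beta}\, E_\alpha^P\left(\frac{1}{T_\alpha^P} - \frac{1}{T_\beta^P}\right).
\]
As in the proof of Lemma \ref{nflock} and Proposition \ref{P5.2}, I would then symmetrize the double sum via the index exchange $\alpha\leftrightarrow\beta$, using $a_{\alpha\beta}=a_{\beta\alpha}$, to rewrite it as
\[
\frac{d\mathcal{E}_{PBCS}^2}{dt} = \frac{T_0^2}{n}\sum_{\alpha,\beta=1}^n a_{\alpha\beta}\,(E_\alpha^P - E_\beta^P)\left(\frac{1}{T_\alpha^P}-\frac{1}{T_\beta^P}\right) = \frac{T_0^2}{n}\sum_{\alpha,\beta=1}^n a_{\alpha\beta}\,\frac{(E_\alpha^P - E_\beta^P)(T_\beta^P - T_\alpha^P)}{T_\alpha^P T_\beta^P}.
\]

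Next I would split the summation over the partition $[n]\times[n] = \mathbf{E}_+ \sqcup \mathbf{E}_-$. By definition, each term with $(\alpha,\beta)\in\mathbf{E}_+$ contributes a strictly positive summand (since $a_{\alpha\beta}>0$ under $\eqref{New-29}$ and $T_\alpha^P T_\beta^P > 0$), while each term with $(\alpha,\beta)\in\mathbf{E}_-$ contributes a nonpositive summand; the diagonal terms and any term where one of the two factors vanishes lie in $\mathbf{E}_-$ and contribute zero. Evaluating everything at $t=0$ and noting that the negative part can be written with $(E_\beta^P(0)-E_\alpha^P(0))(T_\beta^P(0)-T_\alpha^P(0)) \geq 0$ on $\mathbf{E}_-$, the displayed expression becomes
\[
\frac{d\mathcal{E}_{PBCS}^2}{dt}\Big|_{t=0} = \frac{T_0^2}{n}\Bigg\{\sum_{(\alpha,\beta)\in\mathbf{E}_+} a_{\alpha\beta}\frac{(E_\alpha^P(0)-E_\beta^P(0))(T_\beta^P(0)-T_\alpha^P(0))}{T_\alpha^P(0)T_\beta^P(0)} - \sum_{(\alpha,\beta)\in\mathbf{E}_-} a_{\alpha\beta}\frac{(E_\beta^P(0)-E_\alpha^P(0))(T_\beta^P(0)-T_\alpha^P(0))}{T_\alpha^P(0)T_\beta^P(0)}\Bigg\},
\]
which is exactly $\frac{T_0^2}{n}$ times the difference in $\eqref{Eacond}$. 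Since that hypothesis asserts the first sum strictly exceeds the second, the right-hand side is strictly positive, completing the argument.

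The computation is essentially routine once the symmetrization trick — already used twice in the preceding proofs — is applied. The only point requiring a little care is the bookkeeping at $t=0$: I must make sure the factors $\frac{1}{T_\alpha^P}-\frac{1}{T_\beta^P} = \frac{T_\beta^P - T_\alpha^P}{T_\alpha^P T_\beta^P}$ are handled consistently so that the signs match the definitions of $\mathbf{E}_+$ and $\mathbf{E}_-$ in the statement, and that terms with $T_\alpha^P(0)=T_\beta^P(0)$ or $E_\alpha^P(0)=E_\beta^P(0)$ (which vanish) are correctly assigned to $\mathbf{E}_-$ so they do not spoil the strict inequality. I do not anticipate any genuine obstacle; the substance of the result is entirely contained in hypothesis $\eqref{Eacond}$, which is engineered precisely so that the symmetrized derivative is a positive combination.
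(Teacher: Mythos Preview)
Your proposal is correct and follows essentially the same route as the paper: differentiate $\mathcal{E}_{PBCS}^2$ using $\eqref{TCS}_3$, symmetrize via the index exchange $\alpha\leftrightarrow\beta$, rewrite $\frac{1}{T_\alpha}-\frac{1}{T_\beta}=\frac{T_\beta-T_\alpha}{T_\alpha T_\beta}$, and split the sum over $\mathbf{E}_+\sqcup\mathbf{E}_-$ so that hypothesis \eqref{Eacond} yields positivity. One tiny bookkeeping slip: since \eqref{Eacond} already carries the factor $T_0^2$ inside the sums, your final displayed expression equals $\tfrac{1}{n}$ (not $\tfrac{T_0^2}{n}$) times the difference in \eqref{Eacond}; this does not affect the conclusion since the constant is positive.
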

\begin{proof}
We multiply $E_{\al}^P$ on $\eqref{TCS}_3$ to get
\begin{align}
\begin{aligned} \label{ETCS}
\frac{d}{dt}  \mathcal{E}_{PBCS}^2  &= 2  \sum_{\al=1}^n E_\alpha^P \cdot  \frac{d E_\alpha^P}{dt} = \frac{2}{n}  \sum_{\al=1}^n E_\alpha^P \sum_{\beta=1}^n a_{\alpha \beta}\left(\frac{T_0^2}{T_{\al}^P} -\frac{T_0^2}{T_{\beta}^P} \right) \cr 
&=  \frac{1}{n} \sum_{\al, \beta=1}^n a_{\alpha \beta} \left(E_{\al}^P-E_{\beta}^P \right) \left(\frac{T_0^2}{T_{\al}^P} -\frac{T_0^2}{T_{\beta}^P} \right)  \cr 
&=  \frac{1}{n} \sum_{\al, \beta=1}^n a_{\alpha \beta} \frac{T_0^2}{T_{\al}^PT_{\beta}^P} \left(E_{\al}^P-E_{\beta}^P \right) \left(T_{\beta}^P-T_{\al}^P\right).
\end{aligned}
\end{align}
Note that the component $(\al,\beta)\in\mathbf{E}_+$ and $(\al,\beta)\in\mathbf{E}_-$ gives positive effect and negative effect on $\frac{d}{dt}  \mathcal{E}_{PBCS}^2$, respectively. Thus, we can rewrite \eqref{ETCS} as
\begin{align*}
\begin{aligned} 
\frac{d}{dt} \mathcal{E}_{PBCS}^2\Big|_{t=0}  &= \sum_{(\al, \beta)\in \mathrm{E}_+} a_{\alpha \beta}\frac{T_0^2}{T_{\al}^P(0)T_{\beta}^P(0)} \left(E_{\al}^P(0)-E_{\beta}^P(0) \right)\left(T_{\beta}^P(0) - T_{\al}^P(0) \right) \cr 
& - \sum_{(\al, \beta)\in \mathrm{E}_-} a_{\alpha \beta}\frac{T_0^2}{T_{\al}^P(0)T_{\beta}^P(0)} \left(E_{\beta}^P(0)-E_{\al}^P(0) \right)\left(T_{\beta}^P(0) - T_{\al}^P(0) \right).
\end{aligned}
\end{align*}
Finally, we use \eqref{Eacond} to obtain the desired result. 
\end{proof}
\begin{remark}
\begin{enumerate}
\item
For some initial data and communication weight, assumptions in Proposition \ref{P5.3} can be satisfied. To see this, we set 
\begin{align*}
\bega
\bu_{1}(0) &= (1,0,0), \quad \bu_{2}(0) = (-2,0,0), \quad \bu_{3}(0) = (1,0,0), \cr 
T_{1}(0) &= 2, \quad T_{2}(0) = 1, \quad T_{3}(0) = 1,  \quad (a_{\al\beta}) = \lw[\begin{array}{ccc} 0 & 3 & 1 \\ 
3 & 0 & 1 \\ 1 & 1 & 0 \end{array} \rw]. 
\enda
\end{align*}
Then they satisfy  \eqref{initial},  \eqref{Eacond}  and 
\[ E_{1}(0) = \frac{5}{2}, \quad E_{2}(0) = 3, \quad E_{3}(0) = \frac{3}{2}, \quad T_0 =\frac{7}{3}. \] 
Thus we can see that 
\[ (1,2)\in \mathbf{E}_+ \quad \mbox{and} \quad (1,3),(2,3) \in \mathbf{E}_-, \] 
and it follows from \eqref{ETCS} that  
\begin{align*}
\begin{aligned}
\frac{d}{dt} \mathcal{E}_{PBCS}^2\Big|_{t=0}  
&= \frac{2T_0^2}{3}\lw(\frac{1}{4}a_{12}-\frac{1}{2}a_{13}\rw) >0.
\end{aligned}
\end{align*}
\item
In Theorem \ref{T5.1}, emergent of flocking on the PB-CS model \eqref{TCS} is obtained only for small diffusion.
If the solution of system \eqref{TCS} will converges to $\bu_{\al}(t) \rightarrow 0$ and $E_{\al}(t) \rightarrow 0$ when $t\rightarrow \infty$, for all $\al\in[n]$, then we can have an upper bound for $\mathcal{V}$ and $\mathcal{E}$ which possibly increase initially (Proposition \ref{P5.2} and Proposition \ref{P5.3}),  then they should decrease after some time. 
\end{enumerate}
\end{remark}

\vspace{0.5cm}


\subsection{Deviation estimate between PB-CS and KB-CS model} \label{sec:5.3}
In this subsection, we estimate the deviation between two models \eqref{TCS} and \eqref{KCS} for the space, velocity, and energy variables.

\begin{theorem}\label{smdiff}
Suppose that network topology satisfies Type A condition \eqref{NNN-2}. Let $\{ (\bx_{\al}^P,\bu_{\al}^P,T_{\al}^P)\}$ and $\{ (\bx_{\al}^K,\bu_{\al}^K,T_{\al}^K) \}$ be the $C^1$-solutions of \eqref{TCS} and \eqref{KCS} for initial data $\{(\bx_{\al}^P(0),\bu_{\al}^P(0),T_{\al}^P(0))\}$ and $\{ (\bx_{\al}^K(0),\bu_{\al}^K(0),T_{\al}^K(0)) \}$, respectively. If the initial data $\{(\bx_{\al}^P(0),\bu_{\al}^P(0),T_{\al}^P(0))\}$ satisfies \eqref{initial} and \eqref{New-16-2}, then for sufficiently small $\e$, there exists a positive constant $C>0$ such that 
\begin{align}\label{}
\begin{split}
\begin{cases} &|\bx_{\al}^P(t)-\bx_{\al}^K(t)| \leq |\bx_{\al}^P(0)-\bx_{\al}^K(0)| + \frac{1}{\tilde{a}_{\al}}|(\bu_{\al}^P(0)-\bu_{\al}^K(0))| +C\e, \\
&|\bu_{\al}^P(t)-\bu_{\al}^K(t)| \leq e^{-\tilde{a}_{\al}t}|(\bu_{\al}^P(0)-\bu_{\al}^K(0))| + C\varepsilon e^{-\frac{1}{2}\underline{a} t}, \\
&|E_{\al}^P(t)-E_{\al}^K(t)|\leq  e^{-\tilde{a}_{\al}t}|E_{\al}^P(0)-E_{\al}^K(0)| + C\varepsilon e^{-\frac{1}{2}\underline{a} t}, \end{cases}
\end{split}
\end{align}
for all $\al \in [n]$ and $t \geq 0$. Here $\tilde{a}_{\al}$ is defined by
\begin{align}\label{tildea}
\tilde{a}_{\al}:= \frac{1}{n}\sum_{\beta=1}^{n}a_{\al\beta}.
\end{align}
\end{theorem}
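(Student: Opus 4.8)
The plan is to derive a differential inequality for each of the three deviations $\bx_\al^P-\bx_\al^K$, $\bu_\al^P-\bu_\al^K$, and $E_\al^P-E_\al^K$ separately for each fixed index $\al$, treating the cross terms between the two dynamics as forcing terms that are $O(\e)$ thanks to the flocking estimates already established. First I would subtract equation $\eqref{KCS}_2$ from $\eqref{TCS}_2$ and rewrite the phenomenological right-hand side as
\[
\frac{T_0}{n}\sum_\beta a_{\al\beta}\Big(\frac{\bu_\beta^P}{T_\beta^P}-\frac{\bu_\al^P}{T_\al^P}\Big)
= \frac{1}{n}\sum_\beta a_{\al\beta}(\bu_\beta^P-\bu_\al^P) + \frac{1}{n}\sum_\beta a_{\al\beta}R_{\al\beta},
\]
where the remainder $R_{\al\beta}:=T_0\big(\frac{\bu_\beta^P}{T_\beta^P}-\frac{\bu_\al^P}{T_\al^P}\big)-(\bu_\beta^P-\bu_\al^P)$ measures the deviation of the PB-CS nonlinearity from its linearization at $T_0$. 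Using $|T_\al^P(t)-T_0|\le \e T_0$ and $|\bu_\al^P(t)|\le C\mathcal V_0 e^{-\underline a t}\le C\e e^{-\underline a t}$ from Theorem \ref{T5.1} and Remark \ref{R5.1}, one gets $|R_{\al\beta}(t)|\le C\e^2 e^{-\underline a t}$, which is $O(\e)$ times the decay rate $e^{-\underline a t}$ — in fact it is even $O(\e^2)$, so this term is harmless. The genuinely linear part $\frac{1}{n}\sum_\beta a_{\al\beta}(\bu_\beta^P-\bu_\al^P)$ now matches the KB-CS structure exactly.

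\textbf{The velocity deviation.} Writing $\delta\bu_\al:=\bu_\al^P-\bu_\al^K$, the subtraction yields
\[
\frac{d\,\delta\bu_\al}{dt} = \frac{1}{n}\sum_\beta a_{\al\beta}(\delta\bu_\beta-\delta\bu_\al) + \frac{1}{n}\sum_\beta a_{\al\beta}R_{\al\beta}
= -\tilde a_\al\,\delta\bu_\al + \frac{1}{n}\sum_\beta a_{\al\beta}\delta\bu_\beta + O(\e^2 e^{-\underline a t}),
\]
with $\tilde a_\al$ as in \eqref{tildea}. Now I would bound $\big|\frac{1}{n}\sum_\beta a_{\al\beta}\delta\bu_\beta\big|$ crudely by $\frac{1}{n}\sum_\beta a_{\al\beta}|\delta\bu_\beta|$ and observe that each $|\delta\bu_\beta|\le |\bu_\beta^P|+|\bu_\beta^K|\le C\mathcal V_0 e^{-\underline a t}\le C\e e^{-\underline a t}$ directly from the two flocking theorems (Theorem \ref{T5.1} for the $P$-solution, Theorem \ref{T5.2} for the $K$-solution). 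Hence the full inhomogeneous term is $O(\e e^{-\underline a t})$, and $\frac{d}{dt}|\delta\bu_\al|\le -\tilde a_\al|\delta\bu_\al| + C\e e^{-\underline a t}$. Applying the integrating factor $e^{\tilde a_\al t}$ and using $\tilde a_\al\ge \underline a$ (so that $\underline a$, or even $\tfrac12\underline a$, can absorb $\tilde a_\al-\tfrac12\underline a>0$ in the Duhamel integral) gives
\[
|\delta\bu_\al(t)|\le e^{-\tilde a_\al t}|\delta\bu_\al(0)| + C\e\!\int_0^t e^{-\tilde a_\al(t-s)}e^{-\underline a s}\,ds \le e^{-\tilde a_\al t}|\delta\bu_\al(0)| + C\e\, e^{-\frac12\underline a t},
\]
which is exactly the claimed bound. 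The energy deviation $\delta E_\al:=E_\al^P-E_\al^K$ is handled the same way: subtracting $\eqref{KCS}_3$ from $\eqref{TCS}_3$, the KB-CS side already has the linear form $\frac{1}{n}\sum_\beta a_{\al\beta}(E_\beta-E_\al)$, while the PB-CS side $\frac{T_0^2}{n}\sum_\beta a_{\al\beta}\big(\frac1{T_\al^P}-\frac1{T_\beta^P}\big)$ linearizes to $\frac{1}{n}\sum_\beta a_{\al\beta}(E_\beta^P - E_\al^P)$ plus a remainder controlled by $|T_\al^P-T_0|$, $|T_\beta^P-T_0|$ and $|\bu_\al^P|^2$, all $O(\e)$; this produces $\frac{d}{dt}|\delta E_\al|\le -\tilde a_\al|\delta E_\al| + C\e e^{-\underline a t}$ and the same Duhamel estimate. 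Finally, for the position deviation I would integrate $\frac{d}{dt}(\bx_\al^P-\bx_\al^K)=\delta\bu_\al$ in time: the $e^{-\tilde a_\al t}|\delta\bu_\al(0)|$ term integrates to $\frac1{\tilde a_\al}|\delta\bu_\al(0)|(1-e^{-\tilde a_\al t})\le\frac1{\tilde a_\al}|\delta\bu_\al(0)|$, and the $C\e e^{-\frac12\underline a t}$ term integrates to $\le \frac{2C\e}{\underline a}=C\e$.

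\textbf{Main obstacle.} The routine parts are the three Gr\"onwall/Duhamel arguments; the one point requiring care is the bookkeeping of the linearization remainders, i.e. verifying that expanding $T_0\big(\frac{\bu_\beta^P}{T_\beta^P}-\frac{\bu_\al^P}{T_\al^P}\big)$ and $T_0^2\big(\frac1{T_\al^P}-\frac1{T_\beta^P}\big)$ around $(T_0,\mathbf 0)$ genuinely leaves a remainder that decays like $e^{-\underline a t}$ (not merely bounded) — this hinges essentially on the flocking decay $|\bu_\al^P(t)|,|T_\al^P(t)-T_0|\le C\e e^{-\underline a t}$ from Theorem \ref{T5.1}, together with the uniform lower bound $T_\al^P(t)\ge T_0(1-\e)>0$ that keeps all denominators away from zero for small $\e$. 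Once those pointwise-in-$t$ bounds are in hand, the coupling term $\frac1n\sum_\beta a_{\al\beta}\delta\bu_\beta$ need not be treated as a genuine feedback at all: it is simply an explicitly decaying source, which is why no smallness of the matrix $(a_{\al\beta})$ or Gr\"onwall-over-the-whole-system is needed, and the per-particle rate $\tilde a_\al$ survives in the final estimate.
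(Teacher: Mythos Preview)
Your proposal is correct and follows essentially the same route as the paper: subtract the two systems, isolate the linear damping $-\tilde a_\al\,\delta\bu_\al$, treat the cross-coupling $\tfrac1n\sum_\beta a_{\al\beta}\delta\bu_\beta$ and the nonlinear remainder as explicit $O(\e e^{-\underline a t})$ sources via the flocking estimates of Theorems~\ref{T5.1} and~\ref{T5.2}, and close by Duhamel; the energy deviation is handled identically and the position bound by integrating $\delta\bu_\al$. The only cosmetic differences are that the paper expands $T_0/T_\al^P$ as a geometric series rather than writing a single remainder $R_{\al\beta}$, and it splits the Duhamel integral into the two cases $\underline a<\tilde a_\al$ and $\underline a=\tilde a_\al$ before merging them with $\underline a\,t\,e^{-\underline a t}\le C e^{-\frac12\underline a t}$, whereas you pass directly to the $e^{-\frac12\underline a t}$ bound.
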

The proof of Theorem \eqref{smdiff} is demonstrated by the following auxiliary lemmas.

\begin{lemma} Let $\{ (\bx_{\al}^P,\bu_{\al}^P,T_{\al}^P)\}$ and $\{ (\bx_{\al}^K,\bu_{\al}^K,T_{\al}^K) \}$ be the $C^1$-solutions of \eqref{TCS} and \eqref{KCS}, respectively. Then, we have
\begin{align}
\frac{d}{dt}(\bx_{\al}^P-\bx_{\al}^K) &= \bu_{\al}^P-\bu_{\al}^K, \label{x-x} \\ 
\frac{d}{dt}(\bu_{\al}^P-\bu_{\al}^K) &= \frac{1}{n}\sum_{\beta=1}^{n}a_{\al\beta}\left( (\bu_{\beta}^P-\bu_{\beta}^K)-(\bu_{\al}^P-\bu_{\al}^K)\right)\label{u-u} \\ 
&+ \frac{1}{n}\sum_{\beta=1}^n  a_{\al\beta}  \bigg(\Big(\frac{T_0}{T_{\beta}^P}-1\Big)\bu_{\beta}^P-\Big(\frac{T_0}{T_{\al}^P}-1\Big)\bu_{\al}^P\bigg), \cr
\frac{d}{dt}(E_{\al}^P-E_{\al}^K) &= \frac{1}{n}\sum_{\beta=1}^n a_{\alpha \beta}  \Big (\frac{T_0^2}{T^P_\al} -\frac{T_0^2}{T^P_\beta} -E_{\beta}^K+ E_{\al}^K \Big ) \label{E-E}.
\end{align}
\end{lemma}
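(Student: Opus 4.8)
The plan is to derive each of the three identities directly from the governing systems \eqref{TCS} and \eqref{KCS} by subtracting the corresponding equations component-wise and then rewriting the right-hand sides so that the "difference variables" $\bx_\al^P - \bx_\al^K$, $\bu_\al^P - \bu_\al^K$, $E_\al^P - E_\al^K$ appear explicitly, with the remaining terms collected into a source term measuring the structural discrepancy between the two models. The first identity \eqref{x-x} is immediate: both models share the kinematic relation $d\bx_\al/dt = \bu_\al$, so subtraction yields $\frac{d}{dt}(\bx_\al^P - \bx_\al^K) = \bu_\al^P - \bu_\al^K$ with no extra terms.

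For \eqref{u-u}, I would start from $\eqref{TCS}_2$, namely $\frac{d\bu_\al^P}{dt} = \frac{T_0}{n}\sum_\beta a_{\al\beta}\big(\frac{\bu_\beta^P}{T_\beta^P} - \frac{\bu_\al^P}{T_\al^P}\big)$, and from $\eqref{KCS}_2$, namely $\frac{d\bu_\al^K}{dt} = \frac{1}{n}\sum_\beta a_{\al\beta}(\bu_\beta^K - \bu_\al^K)$. The key algebraic move is to split the PB-CS drift term: write $\frac{T_0}{T_\beta^P}\bu_\beta^P = \bu_\beta^P + \big(\frac{T_0}{T_\beta^P} - 1\big)\bu_\beta^P$, and likewise for the $\al$-term. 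Then $\frac{d\bu_\al^P}{dt} = \frac{1}{n}\sum_\beta a_{\al\beta}(\bu_\beta^P - \bu_\al^P) + \frac{1}{n}\sum_\beta a_{\al\beta}\big(\big(\frac{T_0}{T_\beta^P}-1\big)\bu_\beta^P - \big(\frac{T_0}{T_\al^P}-1\big)\bu_\al^P\big)$. Subtracting $\frac{d\bu_\al^K}{dt}$, the leading parts combine into $\frac{1}{n}\sum_\beta a_{\al\beta}\big((\bu_\beta^P - \bu_\beta^K) - (\bu_\al^P - \bu_\al^K)\big)$, and the residual is precisely the second sum in \eqref{u-u}. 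This is a bookkeeping identity rather than an estimate, so no inequalities are needed here.

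For \eqref{E-E}, I would use $\eqref{TCS}_3$ written in the normalized form, i.e. $\frac{dE_\al^P}{dt} = \frac{1}{n}\sum_\beta a_{\al\beta}\big(\frac{T_0^2}{T_\al^P} - \frac{T_0^2}{T_\beta^P}\big)$, which follows since $E_\al = T_\al + \tfrac12 u_\al^2$ and the $\bv$-term in \eqref{TCS} vanishes under \eqref{New-15-1}; and $\eqref{KCS}_3$, i.e. $\frac{dE_\al^K}{dt} = \frac{1}{n}\sum_\beta a_{\al\beta}(E_\beta^K - E_\al^K)$. Subtracting gives $\frac{d}{dt}(E_\al^P - E_\al^K) = \frac{1}{n}\sum_\beta a_{\al\beta}\big(\frac{T_0^2}{T_\al^P} - \frac{T_0^2}{T_\beta^P} - E_\beta^K + E_\al^K\big)$, which is exactly \eqref{E-E}. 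The only subtlety worth a line of comment is checking that the $+\,\hat{\mathbf M}_\al\cdot\bv$ and $+\,b_{\al\beta}(m_\al+m_\beta)(\bu_\beta-\bu_\al)\cdot\bv$ contributions to the energy equations are absent in the rest-frame normalization $\bv=0$, so that both energy equations reduce to the clean forms above.

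The main obstacle here is essentially notational rather than mathematical: one must be careful that the normalization conventions of Section \ref{sec:4.3} (the choices $\rho_\al=m_\al=1$, $k_B=\tfrac23$, $a_{\al\beta} = \phi_{\al\beta}/T_0$, and the rest frame $\bv=0$) are consistently applied so that \eqref{TCS} and \eqref{KCS} really are the two systems being compared, and that the index-exchange symmetry of $a_{\al\beta}$ is not needed for these identities (it is only needed later for the Grönwall estimates in Theorem \ref{smdiff}). Once the bookkeeping is set up, the three identities follow by inspection; I would present them as a short direct computation with the temperature-splitting trick highlighted as the one non-obvious step.
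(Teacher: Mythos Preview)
Your proposal is correct and follows exactly the paper's approach: the paper's proof is the single sentence ``Subtracting two systems \eqref{TCS} and \eqref{KCS} directly gives the result,'' and your write-up simply spells out the algebra behind that subtraction, including the temperature-splitting $\frac{T_0}{T_\beta^P}\bu_\beta^P = \bu_\beta^P + \big(\frac{T_0}{T_\beta^P}-1\big)\bu_\beta^P$ that produces the second sum in \eqref{u-u}.
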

\begin{proof}
Subtracting two systems \eqref{TCS} and \eqref{KCS} directly gives the result. 
\end{proof}

\begin{lemma}\label{u-ulem}
 Under the same assumption of Theorem \ref{smdiff}, we have 
\begin{align}\label{u-uR}
\begin{split}
&|\bu_{\al}^P(t)-\bu_{\al}^K(t)| \cr 
&\leq \begin{cases} e^{-\tilde{a}_{\al}t}|(\bu_{\al}^P(0)-\bu_{\al}^K(0))| + C\varepsilon \frac{\tilde{a}_{\al}}{\tilde{a}_{\al} -\underline{a}} e^{-\underline{a} t}, \quad &\mbox{for} \quad \underline{a}< \tilde{a}_{\al}, \\
e^{-\tilde{a}_{\al}t}|(\bu_{\al}^P(0)-\bu_{\al}^K(0))| + C\varepsilon \underline{a}te^{-\underline{a}t}, \quad &\mbox{for} \quad \underline{a}= \tilde{a}_{\al}.\end{cases}
\end{split}
\end{align}
\end{lemma}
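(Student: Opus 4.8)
The plan is to read the velocity‑difference equation \eqref{u-u} as a per‑particle scalar linear ODE driven by a small, exponentially decaying forcing. Writing $\delta\bu_\al := \bu_\al^P - \bu_\al^K$ and isolating the diagonal part of the consensus term, \eqref{u-u} becomes
\[
\frac{d}{dt}\,\delta\bu_\al = -\tilde{a}_\al\,\delta\bu_\al + \mathbf{g}_\al(t), \quad
\mathbf{g}_\al(t):=\frac{1}{n}\sum_{\beta=1}^n a_{\al\beta}\,\delta\bu_\beta + \frac{1}{n}\sum_{\beta=1}^n a_{\al\beta}\Big(\Big(\frac{T_0}{T_\beta^P}-1\Big)\bu_\beta^P-\Big(\frac{T_0}{T_\al^P}-1\Big)\bu_\al^P\Big),
\]
with $\tilde{a}_\al=\frac1n\sum_\beta a_{\al\beta}$ as in \eqref{tildea}. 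Duhamel's formula then gives $\delta\bu_\al(t)=e^{-\tilde{a}_\al t}\delta\bu_\al(0)+\int_0^t e^{-\tilde{a}_\al(t-s)}\mathbf{g}_\al(s)\,ds$, so the whole lemma reduces to an exponential bound on $|\mathbf{g}_\al|$.

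For that bound I would not bootstrap the coupled family $\{\delta\bu_\al\}$, but estimate $|\delta\bu_\beta|\le|\bu_\beta^P|+|\bu_\beta^K|$ termwise using the flocking estimates already in hand. Theorem \ref{T5.1} (under the smallness \eqref{New-16-2}) gives $|\bu_\beta^P(t)|\le C\varepsilon e^{-\underline{a}t}$ and $|T_\beta^P(t)-T_0|\le C\varepsilon$, while Theorem \ref{T5.2}(i) gives $|\bu_\beta^K(t)|\le C\varepsilon e^{-\underline{a}t}$. Substituting into $\mathbf{g}_\al$, the factors $|\tfrac{T_0}{T_\gamma^P}-1|\le C\varepsilon$ multiply already‑decaying velocities, and each averaged sum $\frac1n\sum_\beta a_{\al\beta}(\cdots)$ carries a factor $\tilde{a}_\al$, so that $|\mathbf{g}_\al(s)|\le C\tilde{a}_\al\varepsilon e^{-\underline{a}s}$ once $\varepsilon$ is small enough.

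It then remains to evaluate $\int_0^t e^{-\tilde{a}_\al(t-s)}e^{-\underline{a}s}\,ds$ inside the Duhamel formula. Because $a_{\al\beta}\ge\underline{a}$ for all $\al,\beta$ forces $\tilde{a}_\al\ge\underline{a}$, only two cases occur: for $\underline{a}<\tilde{a}_\al$ the integral equals $\frac{e^{-\underline{a}t}-e^{-\tilde{a}_\al t}}{\tilde{a}_\al-\underline{a}}\le\frac{e^{-\underline{a}t}}{\tilde{a}_\al-\underline{a}}$, which produces the term $C\varepsilon\frac{\tilde{a}_\al}{\tilde{a}_\al-\underline{a}}e^{-\underline{a}t}$; for $\underline{a}=\tilde{a}_\al$ it equals $t\,e^{-\underline{a}t}$, producing $C\varepsilon\,\underline{a}\,t\,e^{-\underline{a}t}$. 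Adding $e^{-\tilde{a}_\al t}|\delta\bu_\al(0)|$ gives exactly \eqref{u-uR}. The only genuine obstacle is the coupling through $\frac1n\sum_\beta a_{\al\beta}\delta\bu_\beta$: handling it via an $\ell^2$ estimate on $\{\delta\bu_\al\}$ first would create a spurious factor $(1+t)$ and degrade the decay to rate $\frac{1}{2}\underline{a}$, so it is essential to bound $|\delta\bu_\beta|$ termwise through the two separate flocking theorems and keep the forcing decaying at the sharp rate $e^{-\underline{a}s}$.
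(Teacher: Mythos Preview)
Your proposal is correct and follows essentially the same route as the paper: both rewrite \eqref{u-u} via Duhamel with the diagonal coefficient $\tilde a_\al$, bound the coupling term by the crude triangle inequality $|\delta\bu_\beta|\le|\bu_\beta^P|+|\bu_\beta^K|$ together with the flocking decay from Theorems \ref{T5.1} and \ref{T5.2}, control $|T_0/T_\gamma^P-1|$ by $O(\varepsilon)$ (the paper does this via the geometric series \eqref{expansion}), and then evaluate $\int_0^t e^{-\tilde a_\al(t-s)}e^{-\underline{a}s}\,ds$ in the two cases $\underline a<\tilde a_\al$ and $\underline a=\tilde a_\al$. Your closing remark about avoiding an $\ell^2$ bootstrap is exactly the point: the paper likewise never couples the $\delta\bu_\al$ back to one another.
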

\begin{proof}
We integrate \eqref{u-u} from $0$ to $t$ to get 
\begin{align}
\begin{aligned} \label{milduu}
&\bu_{\al}^P(t)-\bu_{\al}^K(t) = e^{-\tilde{a}_{\al}t}(\bu_{\al}^P(0)-\bu_{\al}^K(0)) \\
& \hspace{0.2cm}  +  \int_0^t e^{-\tilde{a}_{\al}(t-s)} \bigg[ \frac{1}{n}\sum_{\beta=1}^{n}a_{\al\beta}(\bu_{\beta}^P(s)-\bu_{\beta}^K(s)) \cr 
& \hspace{0.2cm} + \frac{1}{n}\sum_{\beta=1}^n  a_{\al\beta}  \bigg(\Big(\frac{T_0}{T_{\beta}^P(s)}-1\Big)\bu_{\beta}^P(s)-\Big(\frac{T_0}{T_{\al}^P(s)}-1\Big)\bu_{\al}^P(s)\bigg) \bigg] ds.
\end{aligned}
\end{align}
For the third line of \eqref{milduu}, we consider the series expansion 
\[ \frac{1}{1+x} = \sum_{k\geq 0}(-x)^k, \quad \mbox{for $|x|<1$}. \]
By \eqref{New-16-2.5} in Theorem \ref{T5.1}, the temperature $T_{\al}^P(t)$ satisfies 
\begin{equation} \label{Tsmall}
\bega
\frac{|T_{\al}^P(t)-T_0|}{T_0}<\varepsilon \leq 1, \quad  t \geq 0, \quad \forall \al.
\enda
\end{equation}
Thus, we have 
\begin{align}\label{expansion}
\frac{T_0}{T_{\al}^P} = \frac{1}{1+\frac{T_{\al}^P-T_0}{T_0}} = \sum_{k\geq 0}\lw(-\frac{T_{\al}^P-T_0}{T_0}\rw)^k, \quad \bigg|\frac{T_0}{T_{\al}^P}-1\bigg| \leq \sum_{k\geq 1} \e^k.
\end{align}
We use \eqref{expansion} on \eqref{milduu} to find 
\begin{align} 
\begin{aligned} \label{milduu2}
|\bu_{\al}^P(t)&-\bu_{\al}^K(t)| \leq e^{-\tilde{a}_{\al}t}|\bu_{\al}^P(0)-\bu_{\al}^K(0)| \cr 
&+  \int_0^t e^{-\tilde{a}_{\al}(t-s)} \bigg[ \frac{1}{n}\sum_{\beta=1}^{n}a_{\al\beta}\lw(|\bu_{\beta}^P(s)|+|\bu_{\beta}^K(s)|\rw)  \bigg] ds \cr 
&+  \int_0^t e^{-\tilde{a}_{\al}(t-s)} \bigg[ \frac{1}{n}\sum_{\beta=1}^{n}a_{\al\beta}\sum_{k\geq 1} \e^k \lw(|\bu_{\beta}^P(s)|+|\bu_{\al}^P(s)|\rw)  \bigg] ds. 
\end{aligned}
\end{align}
From \eqref{New-16-2.5} in Theorem \ref{T5.1}, we also have
\begin{equation}\label{vsmall}
\bega
&|\bu_{\al}^P(t)| \leq \mathcal{V}(t)\leq C\mathcal{V}(0) e^{-\underline{a} t} \leq C \varepsilon e^{-\underline{a} t}, \quad t \geq 0, \quad \forall \al.
\enda
\end{equation}
We apply \eqref{Tsmall} and \eqref{vsmall} to  \eqref{milduu2} to get 
\begin{align}\label{uuconv}
\begin{split}
&|\bu_{\al}^P(t)-\bu_{\al}^K(t)| \leq e^{-\tilde{a}_{\al}t}|(\bu_{\al}^P(0)-\bu_{\al}^K(0))| \\
&\hspace{0.5cm} + C\e\Big(1+\sum_{k\geq 1}\varepsilon^k\Big) \int_0^t e^{-\tilde{a}_{\al}(t-s)} \tilde{a}_{\al} e^{-\underline{a} s}ds.
\end{split}
\end{align}
By explicit computations, we have 
\begin{align}\label{ee}
\begin{split}
\int_0^t e^{-\tilde{a}_{\al}(t-s)} \tilde{a}_{\al} e^{-\underline{a} s}ds \leq \begin{cases}\frac{\tilde{a}_{\al}}{\tilde{a}_{\al} -\underline{a}} e^{-\underline{a} t}, \quad &\mbox{for} \quad \underline{a}< \tilde{a}_{\al}, \\ 
\underline{a}te^{-\underline{a}t} , \quad &\mbox{for} \quad \underline{a}= \tilde{a}_{\al}. \end{cases}
\end{split}
\end{align}
Now, we combine \eqref{uuconv} and \eqref{ee} to get
\begin{align}\label{}
\begin{split}
&|\bu_{\al}^P(t)-\bu_{\al}^K(t)| \cr 
&\leq \begin{cases} e^{-\tilde{a}_{\al}t}|(\bu_{\al}^P(0)-\bu_{\al}^K(0))| + C\varepsilon \frac{\tilde{a}_{\al}}{\tilde{a}_{\al} -\underline{a}} e^{-\underline{a} t}, \quad &\mbox{for} \quad \underline{a}< \tilde{a}_{\al}, \\
e^{-\tilde{a}_{\al}t}|(\bu_{\al}^P(0)-\bu_{\al}^K(0))| + C\varepsilon \underline{a}te^{-\underline{a}t}, \quad &\mbox{for} \quad \underline{a}= \tilde{a}_{\al},\end{cases}
\end{split}
\end{align}
where we used $\underline{a}\leq \tilde{a}_{\al}$ by definition \eqref{NNN-2} and \eqref{tildea}.
\end{proof}

\begin{lemma}\label{x-xlem} 
Under the same assumption of Theorem \ref{smdiff}, we have 
\begin{align*} 
\begin{aligned}
|\bx_{\al}^P(t)-\bx_{\al}^K(t)| &\leq |\bx_{\al}^P(0)-\bx_{\al}^K(0)| + \frac{1}{\tilde{a}_{\al}}(1-e^{-\tilde{a}_{\al}t})|(\bu_{\al}^P(0)-\bu_{\al}^K(0))| \cr 
&+\begin{cases} C\varepsilon \frac{\tilde{a}_{\al}}{\tilde{a}_{\al} -\underline{a}} \frac{1}{\underline{a}}(1-e^{-\underline{a}t}), \quad &\mbox{for} \quad \underline{a}< \tilde{a}_{\al}, \\ C\varepsilon \frac{1}{\underline{a}}(1-e^{-\underline{a}t}(\underline{a}t+1)), \quad &\mbox{for} \quad \underline{a}= \tilde{a}_{\al}. \end{cases}
\end{aligned}
\end{align*}
\end{lemma}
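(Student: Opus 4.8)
The plan is to pass to the mild (integral) formulation of \eqref{x-x} and then feed in the velocity-deviation decay estimate already established in Lemma \ref{u-ulem}. Integrating \eqref{x-x} from $0$ to $t$ gives
\[
\bx_{\al}^P(t)-\bx_{\al}^K(t) = \bx_{\al}^P(0)-\bx_{\al}^K(0) + \int_0^t \big(\bu_{\al}^P(s)-\bu_{\al}^K(s)\big)\, ds,
\]
so by the triangle inequality
\[
|\bx_{\al}^P(t)-\bx_{\al}^K(t)| \leq |\bx_{\al}^P(0)-\bx_{\al}^K(0)| + \int_0^t |\bu_{\al}^P(s)-\bu_{\al}^K(s)|\, ds.
\]

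Next I would split into the two regimes $\underline{a}<\tilde{a}_{\al}$ and $\underline{a}=\tilde{a}_{\al}$ dictated by Lemma \ref{u-ulem}; this dichotomy is exhaustive since $\underline{a}\leq\tilde{a}_{\al}$ always holds by \eqref{NNN-2} and \eqref{tildea}. In the first regime I substitute $|\bu_{\al}^P(s)-\bu_{\al}^K(s)| \le e^{-\tilde{a}_{\al}s}|\bu_{\al}^P(0)-\bu_{\al}^K(0)| + C\varepsilon\frac{\tilde{a}_{\al}}{\tilde{a}_{\al}-\underline{a}}e^{-\underline{a}s}$ and use the elementary identities $\int_0^t e^{-\tilde{a}_{\al}s}\,ds = \frac{1}{\tilde{a}_{\al}}(1-e^{-\tilde{a}_{\al}t})$ and $\int_0^t e^{-\underline{a}s}\,ds = \frac{1}{\underline{a}}(1-e^{-\underline{a}t})$. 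In the borderline regime I substitute instead the bound with $C\varepsilon\,\underline{a}s\,e^{-\underline{a}s}$ in place of the last term, and use $\int_0^t \underline{a}s\,e^{-\underline{a}s}\,ds = \frac{1}{\underline{a}}\big(1-e^{-\underline{a}t}(\underline{a}t+1)\big)$. Collecting terms reproduces exactly the two displayed bounds claimed in the lemma.

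There is no genuine obstacle here: the hard analytic input is the estimate for $\bu_{\al}^P-\bu_{\al}^K$, which is already proved in Lemma \ref{u-ulem} (relying in turn on the a priori smallness \eqref{Tsmall}--\eqref{vsmall} of $T_{\al}^P-T_0$ and $\bu_{\al}^P$ coming from Theorem \ref{T5.1}), so what remains is pure bookkeeping — integrating two explicit exponential profiles in time and keeping the two cases separate. The only point requiring mild care is the borderline case $\underline{a}=\tilde{a}_{\al}$, where the factor $\tfrac{1}{\tilde{a}_{\al}-\underline{a}}$ is singular and must be replaced by the linear-in-$t$ correction before integrating; everything else is a routine application of Gr\"onwall-type estimates and monotonicity of the exponential integrals.
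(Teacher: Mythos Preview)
Your proposal is correct and follows essentially the same approach as the paper: integrate \eqref{x-x} to obtain the mild form, apply the triangle inequality, substitute the velocity-deviation bound \eqref{u-uR} from Lemma \ref{u-ulem}, and evaluate the resulting exponential integrals in the two cases $\underline{a}<\tilde{a}_{\al}$ and $\underline{a}=\tilde{a}_{\al}$. The paper merely states ``substitute \eqref{u-uR} into \eqref{New-24} and use direct computation'' without displaying the integrals, whereas you have written them out explicitly.
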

\begin{proof}
We integrate \eqref{x-x} with respect to time to find
\begin{align}
\begin{aligned} \label{New-24}
\bx_{\al}^P(t)-\bx_{\al}^K(t) &=  \bx_{\al}^P(0)-\bx_{\al}^K(0) +  \int_0^t (\bu_{\al}^P(s)-\bu_{\al}^K(s)) ds.
\end{aligned}
\end{align}
Now, we substitute \eqref{u-uR} into \eqref{New-24} and use direct computation to get the desired estimate. 
\end{proof}

\begin{lemma}\label{E-Elem} 
Under the same assumption of Theorem \ref{smdiff}, we have 
\begin{align}\label{E-ER}
\begin{split}
&|E_{\al}^P(t)-E_{\al}^K(t)| \cr 
& \hspace{0.5cm} \leq \begin{cases} e^{-\tilde{a}_{\al}t}|E_{\al}^P(0)-E_{\al}^K(0)| + C\varepsilon \frac{\tilde{a}_{\al}}{\tilde{a}_{\al} -\underline{a}} e^{-\underline{a} t}, \quad &\mbox{for} \quad \underline{a}< \tilde{a}_{\al}, \\
e^{-\tilde{a}_{\al}t}|E_{\al}^P(0)-E_{\al}^K(0)| + C\varepsilon \underline{a}te^{-\underline{a}t}, \quad &\mbox{for} \quad \underline{a}= \tilde{a}_{\al}.\end{cases}
\end{split}
\end{align}
\end{lemma}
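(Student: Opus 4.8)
The plan is to integrate the linear equation \eqref{E-E} for the discrepancy $D_{\al}(t):=E_{\al}^P(t)-E_{\al}^K(t)$ against the integrating factor $e^{\tilde{a}_{\al}t}$, in complete parallel with the proof of Lemma \ref{u-ulem}. The first task is to rewrite the right-hand side of \eqref{E-E} so that the damping coefficient $\tilde{a}_{\al}$ appears explicitly, together with a remainder that is quadratically small. Using $E_{\al}^P=T_{\al}^P+\tfrac12|\bu_{\al}^P|^2-T_0$ and the elementary identity $\tfrac{T_0^2}{T}+T-2T_0=\tfrac{(T-T_0)^2}{T}$, one may write
\[
\frac{T_0^2}{T_{\al}^P}=T_0-E_{\al}^P+r_{\al},\qquad r_{\al}:=\frac{(T_{\al}^P-T_0)^2}{T_{\al}^P}+\frac{1}{2}|\bu_{\al}^P|^2 .
\]
Substituting this identity and its $\beta$-analogue into \eqref{E-E} and collecting the terms $E_{\al}^K-E_{\al}^P=-D_{\al}$ and $E_{\beta}^P-E_{\beta}^K=D_{\beta}$, equation \eqref{E-E} becomes
\[
\frac{d}{dt}D_{\al}=-\tilde{a}_{\al}D_{\al}+\frac{1}{n}\sum_{\beta=1}^{n}a_{\al\beta}D_{\beta}+\frac{1}{n}\sum_{\beta=1}^{n}a_{\al\beta}\big(r_{\al}-r_{\beta}\big).
\]
Alternatively one could repeat the geometric-series expansion of $T_0/T_{\al}^P$ used in Lemma \ref{u-ulem}; the perfect-square form above is simply cleaner here.

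Second, Duhamel's formula applied to the displayed ODE gives
\[
D_{\al}(t)=e^{-\tilde{a}_{\al}t}D_{\al}(0)+\int_0^t e^{-\tilde{a}_{\al}(t-s)}\Big[\frac{1}{n}\sum_{\beta=1}^{n}a_{\al\beta}D_{\beta}(s)+\frac{1}{n}\sum_{\beta=1}^{n}a_{\al\beta}\big(r_{\al}(s)-r_{\beta}(s)\big)\Big]ds.
\]
The key point, exactly the device used in Lemma \ref{u-ulem}, is that we do not close a Gr\"onwall loop in the vector $(D_{\al})$; rather, each $D_{\beta}$ is treated as an already-small source. Indeed, the smallness hypotheses \eqref{New-16-2} give $\mathcal{E}_0\le C\varepsilon$ as in \eqref{NNN-7}, so the flocking estimates of Theorem \ref{T5.1} (for the PB-CS solution) and Theorem \ref{T5.2} (for the KB-CS solution) yield $|E_{\beta}^P(s)|,|E_{\beta}^K(s)|\le C\varepsilon e^{-\underline{a}s}$, hence $|D_{\beta}(s)|\le C\varepsilon e^{-\underline{a}s}$; likewise \eqref{New-16-2.5} forces $|T_{\al}^P(s)-T_0|\le C\varepsilon e^{-\underline{a}s}$ and $|\bu_{\al}^P(s)|\le C\varepsilon e^{-\underline{a}s}$, so $r_{\al}(s)=O(\varepsilon^2 e^{-2\underline{a}s})$. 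Therefore the bracket is bounded by $C\varepsilon\,\tilde{a}_{\al}e^{-\underline{a}s}$ and
\[
|D_{\al}(t)|\le e^{-\tilde{a}_{\al}t}|D_{\al}(0)|+C\varepsilon\int_0^t e^{-\tilde{a}_{\al}(t-s)}\tilde{a}_{\al}e^{-\underline{a}s}\,ds.
\]
Since $\tilde{a}_{\al}=\tfrac1n\sum_{\beta}a_{\al\beta}\ge\underline{a}$ by \eqref{NNN-2} and \eqref{tildea}, the last integral is bounded exactly by the elementary estimate \eqref{ee} already established, which splits into the cases $\underline{a}<\tilde{a}_{\al}$ and $\underline{a}=\tilde{a}_{\al}$ and produces precisely \eqref{E-ER}.

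The step that requires the most care is the algebraic reorganization in the first paragraph: one must simultaneously produce the exact rate $\tilde{a}_{\al}$ in front of $D_{\al}$ and present the remainder $r_{\al}$ in a form that is visibly $O(\varepsilon^2 e^{-2\underline{a}t})$, so that it is harmless. Beyond that there is no real obstacle: the argument is Duhamel plus the a priori flocking decay of both models (which is what lets the coupling term $\tfrac1n\sum_{\beta}a_{\al\beta}D_{\beta}$ act as a source rather than feedback) plus the integral bound \eqref{ee}. Finally, collapsing the two cases of \eqref{E-ER} to the single crude bound $C\varepsilon e^{-\frac12\underline{a}t}$ of Theorem \ref{smdiff} is immediate, since both $\tfrac{\tilde{a}_{\al}}{\tilde{a}_{\al}-\underline{a}}e^{-\underline{a}t}$ and $\underline{a}t e^{-\underline{a}t}$ are $\le C e^{-\frac12\underline{a}t}$.
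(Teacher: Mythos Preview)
Your proof is correct and follows essentially the same route as the paper's: rewrite \eqref{E-E} so that the damping $-\tilde a_\alpha D_\alpha$ appears explicitly, apply Duhamel, treat the coupling $\tfrac{1}{n}\sum_\beta a_{\alpha\beta}D_\beta$ as a small source via the a~priori flocking decay of both models, and finish with the integral estimate \eqref{ee}. The only cosmetic difference is that you use the exact identity $\tfrac{T_0^2}{T}+T-2T_0=\tfrac{(T-T_0)^2}{T}$ to isolate the quadratic remainder $r_\alpha$, whereas the paper instead expands $T_0/T_\alpha^P$ as a geometric series (as in \eqref{expansion}); your device is slightly cleaner but yields the same $O(\varepsilon e^{-\underline a s})$ bound on the forcing.
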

\begin{proof}
We add and subtract $\frac{1}{n}\sum_{\beta=1}^{n}a_{\al\beta} (E_{\al}^P+E_{\beta}^P)$ to the right-side of \eqref{E-E} to find 
\begin{align}
\begin{aligned} \label{New-24-1}
&\frac{d}{dt}(E_{\al}^P-E_{\al}^K) \\
& \hspace{0.2cm} = -\frac{1}{n}\sum_{\beta=1}^{n}a_{\al\beta} (E_{\al}^P-E_{\al}^K) \\
&  \hspace{0.2cm}  = \frac{1}{n}\sum_{\beta=1}^{n}a_{\al\beta} (E_{\beta}^P-E_{\beta}^K) + \frac{1}{n}\sum_{\al=1}^n \sum_{\beta=1}^n a_{\alpha \beta}\bigg(E_{\al}^P+\frac{T_0^2}{T_{\al}^P} - E_{\beta}^P-\frac{T_0^2}{T_{\beta}^P}\bigg).
\end{aligned}
\end{align}
We integrate \eqref{New-24-1} with respect to $t$ to find 
\begin{align}
\begin{aligned} \label{mildEE}
&E_{\al}^P(t)-E_{\al}^K(t) \\
& \hspace{0.4cm} = e^{-\tilde{a}_{\al}t}(E_{\al}^P(0)-E_{\al}^K(0)) \\
& \hspace{0.5cm} +  \int_0^t e^{-\tilde{a}_{\al}(t-s)} \bigg[ \frac{1}{n}\sum_{\beta=1}^{n}a_{\al\beta}(E_{\beta}^P(s)-E_{\beta}^K(s)) \cr 
& \hspace{0.5cm} + \frac{1}{n}\sum_{\alpha, \beta=1}^n  a_{\al\beta}  \bigg(E_{\al}^P(s)+\frac{T_0^2}{T_{\al}^P(s)}-T_0 - E_{\beta}^P(s)-\frac{T_0^2}{T_{\beta}^P(s)}+T_0\bigg) \bigg] ds,
\end{aligned}
\end{align}
where we subtracted and added $T_0$ on the third line. 
By Theorem \ref{T5.1}, we have the following two decay estimates: 
\begin{align}\label{Edecay}
\bega
&|E^P_{\al}(t)|  \leq \mathcal{E}(t)\leq C\mathcal{E}(0) e^{-\underline{a}t} \leq C\varepsilon e^{-\underline{a}t},\cr 
&|T_{\al}^P(t)-T_0|  \leq \mathcal{E}(t)+\frac{1}{2}|\bu^P(t)|^2 \leq C\varepsilon e^{-\underline{a}t} + C\varepsilon^2 e^{-2\underline{a}t} \leq C\varepsilon e^{-\underline{a}t},
\enda
\end{align}
where we used \eqref{vsmall}. For the last line of \eqref{mildEE}, we use \eqref{expansion} and apply \eqref{Edecay} to obtain
\begin{align}
\begin{aligned}\label{ETdecay}
\bigg|E_{\al}^P(s)+\frac{T_0^2}{T_{\al}^P(s)}-T_0\bigg| 
&\leq C\e e^{-\underline{a}s} + T_0 \sum_{k\geq 1}\lw(\frac{C\e e^{-\underline{a}s}}{T_0}\rw)^k.
\end{aligned}
\end{align}
Substituting \eqref{Edecay} and \eqref{ETdecay} on \eqref{mildEE} yields 
\begin{align}\label{mildEE2}
\begin{split}
&|E_{\al}^P(t) -E_{\al}^K(t)|  \\
& \hspace{0.5cm} \leq  e^{-\tilde{a}_{\al}t}|E_{\al}^P(0)-E_{\al}^K(0)| \cr 
& \hspace{0.5cm} +\int_0^t e^{-\tilde{a}_{\al}(t-s)} \bigg[2\tilde{a}_{\al} \left(C\e e^{-\underline{a}s} + T_0 \sum_{k\geq 1}\lw(\frac{C\e e^{-\underline{a}s}}{T_0}\rw)^k \right) \bigg] ds.
\end{split}
\end{align}
For $\e$ satisfying $C\varepsilon <T_0$, we have 
\[ T_0 \sum_{k\geq 1}\lw(\frac{C\e e^{-\underline{a}s}}{T_0}\rw)^k\leq 2C\e e^{-\underline{a}s}. \]
This yields
\begin{align*}
\begin{aligned}
|E_{\al}^P(t)-E_{\al}^K(t)| &\leq e^{-\tilde{a}_{\al}t}|E_{\al}^P(0)-E_{\al}^K(0)| + C\varepsilon \int_0^t e^{-\tilde{a}_{\al}(t-s)} \tilde{a}_{\al} e^{-\underline{a}s} ds.
\end{aligned}
\end{align*}
Finally, we apply \eqref{ee} on the time integration to obtain the desired estimate. 
\end{proof}

Now, we are ready to provide the proof of Theorem \ref{smdiff}. \newline

\noindent {\bf Proof of Theorem \ref{smdiff}} We combine Lemma \ref{u-ulem}, Lemma \ref{x-xlem} and Lemma \ref{E-Elem} and use $\underline{a}te^{-\underline{a}t} \leq Ce^{-\frac{1}{2}\underline{a}t}$ to have the desired result.

\section{Numerical Simulations} \label{sec:6}
\setcounter{equation}{0} 
In this section, we compare the dynamics of two particle models described in Section \ref{sec:5.2} using numerical simulations. Since solutions to the both models tend to the same asymptotic velocities, for the comparison, we focus on the dynamics in initial layer. 


\subsection{Simulation set-up}  \label{sec:6.1}
We compare the PB-CS model \eqref{TCS} and the KB-CS model \eqref{KCS} according to the two different types of interaction weight. In both cases, we consider  one-dimensional case $d=1$. \\

\noindent$\bullet$~\textbf{Case A}: In this case, we follow the setting as in Section \ref{sec:5.2.1} which we recall here for convenience. Consider the uniform constant interaction weight as \eqref{a1}:
\begin{equation}\label{E6.2.1}
a_{\al\beta}=1 \quad \forall~\alpha, \beta \in [n].
\end{equation}
Here, we consider a three-particle system on $\mathbb{R}$.
\begin{align}
\begin{aligned}\label{E6.2.2}
\bu_{1}(0) &= 1, \quad \bu_{2}(0) = 2, \quad \bu_{3}(0) = -3, \cr 
T_{1}(0) &= 3, \quad T_{2}(0) = 0.01, \quad T_{3}(0) = 3.
\end{aligned}
\end{align}
Note that the dynamics with initial data \eqref{E6.2.2} is similar to \eqref{New-27}. Furthermore, we have
\[
T_0 \approx 4.3366.
\] 
Finally, initial positions are chosen randomly in $[-1/2,1/2]$ and then we rescale them to satisfy \eqref{inu}:
\[
x_1(0) \approx 0.2108, \quad x_2(0) \approx -0.3500, \quad x_3(0) \approx 0.1392.
\]
However, we stress that the choice of initial position does not affect the dynamics of $\bu_{\al}$ and $T_{\al}$. \\ 

\noindent$\bullet$~\textbf{Case B}:~The second case deals with the all-to-all symmetric interaction weight as in Section \ref{sec:5.2.2}. In particular, the interaction matrix $(a_{\alpha \beta})$ is given by
\begin{equation}\label{E6.2.3}
(a_{\al\beta})= \lw[\begin{array}{cccc} 0 & 100 & 1 & 1\\ 
100 & 0 & 1 & 1\\ 1 & 1 & 0 & 100 \\ 1 & 1 & 100 & 0\end{array} \rw].
\end{equation} 
Now we fix the initial position and temperature by 
\begin{align*}
&x_1(0) \approx 0.3709, \quad x_2(0) \approx  -0.1899, \quad x_3(0) \approx 0.2992, \quad x_4(0) \approx -0.4802,\\
&T_{1}(0) = 1, \quad T_{2}(0) = 0.1, \quad T_{3}(0) = 1, \quad T_4(0) = 1.
\end{align*}
Again, we note that the choice of initial position does not influence the dynamics of $\bu_{\al}$ and $T_{\al}$. In this case, we consider two types of initial velocity for a four-particle system. The first set of initial data is given by
\begin{align}
\begin{aligned}\label{E6.2.4}
\bu_{1}(0) &= 2, \quad \bu_{2}(0) = 1.1, \quad \bu_{3}(0) = -1.1, \quad \bu_{4}(0)= -2.
\end{aligned}
\end{align}
Then, we implement the simulation with the initial data:
\begin{align}
\begin{aligned}\label{E6.2.5}
\bu_{1}(0) &= 1, \quad \bu_{2}(0) = 2, \quad \bu_{3}(0) = -1, \quad \bu_{4}(0)= -2.
\end{aligned}
\end{align}

\subsection{Simulation results} \label{sec:6.2}
We present the results of the simulations. \\

\noindent$\bullet$~\textbf{Case A}: In Figure \ref{Case1-1}, we can see the evolution of velocities of the PB-CS model (Figure \ref{Case1-1}-(a)) and the KB-CS model (Figure \ref{Case1-1}-(b)). In particular, Figurre \ref{Case1-1}-(b) shows the explicit solution of the KB-CS model obtained in \eqref{New-26}. The blue lines in Figure \ref{Case1-1}-(a) and (b) represent $\bu_1$ in each of the models. The velocity $|\bu_1|$ in the PB-CS model initially increases as \eqref{New-28} shows. In Figure \ref{Case1-1}-(c), we present $\bu_1$ of the PB-CS model (red line) and the KB-CS (blue line) with different scales. We can clearly see the increase of $|\bu_1|$ in the PB-CS model which is consistent with \eqref{New-28}.

\begin{figure}[h]
\centering 
\subfigure[PB-CS velocity]{
\includegraphics[width=0.3\linewidth]{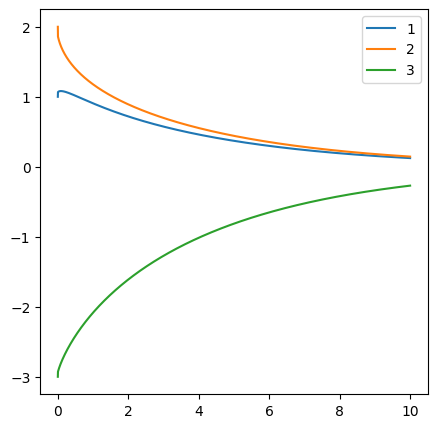} }
\subfigure[KB-CS velocity]{
\includegraphics[width=0.3\linewidth]{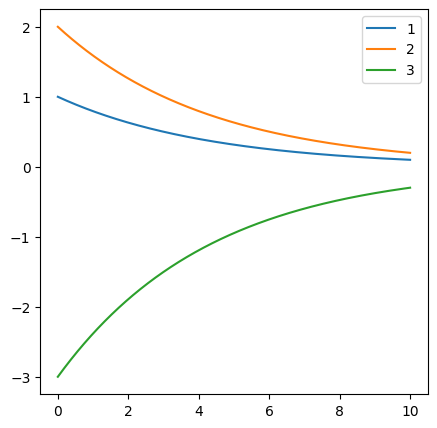} }
\subfigure[Velocities of $\bu_1$ in PB-CS and KB-CS]{
\includegraphics[width=0.3\linewidth]{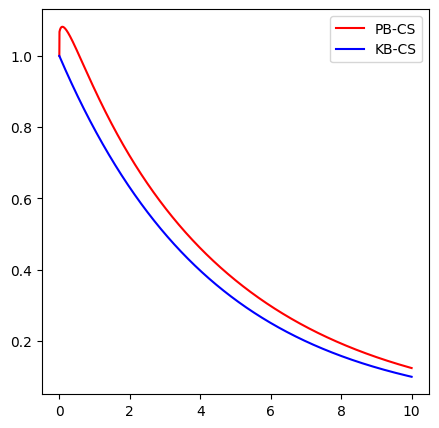} }
\caption{The dynamics of velocities with time-step $=0.001$. (c) is the zoomed-in picture of $\bu_1$ in PB-CS and KB-CS models. } 
\label{Case1-1}
\end{figure}
In Figure \ref{Case1-2}, we describe the evolution of temperatures in the PB-CS model (Figure \ref{Case1-2}-(a)) and the KB-CS model (Figure \ref{Case1-2}-(b)). We observe that in Figure \ref{Case1-2}-(a), the first temperature $T_1$ (blue line) initially decreases even if it starts below $T_0$. Indeed, in the same way as we compute in \eqref{New-28}, we have
\[
\frac{dT_1}{dt}\bigg|_{t=0} = -\frac{595}{9}T_0 - \frac{299}{9}T_0^2 < 0,
\]
which explains the initial behavior of $T_1$ in the PB-CS model. 
\begin{figure}[h]
\centering 
\subfigure[PB-CS temperature]{
\includegraphics[width=0.3\linewidth]{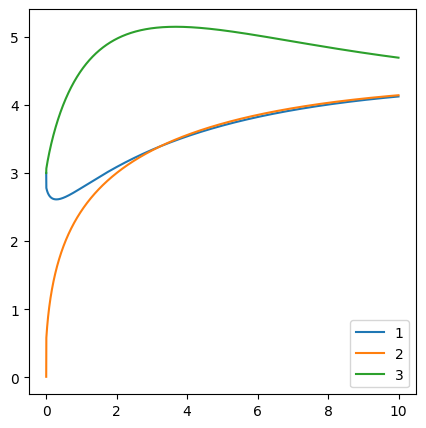} }
\subfigure[KB-CS temperature]{
\includegraphics[width=0.3\linewidth]{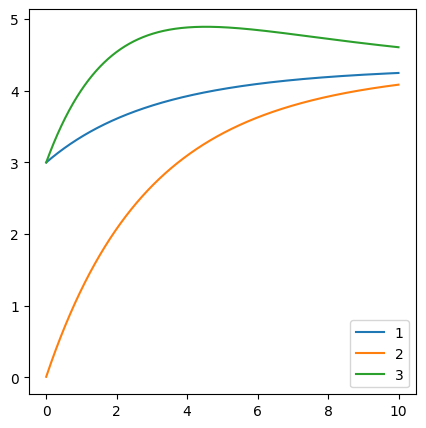} }
\caption{The dynamics of temperatures with time-step $=0.001$. (a) illustrates the evolution of temperature in the PB-CS model and (b) describes the temperature in the KB-CS model.} 
\label{Case1-2}
\end{figure}
Finally, in Figure \ref{Case1-3}, we illustrate the evolution of the functionals $\mathcal{V}$ and $\mathcal{E}$ \eqref{XVdef}. The monotone decrease of $\mathcal{V}$ and $\mathcal{E}$ corresponds to \eqref{New-26.5} and \eqref{VTR}.
\begin{figure}[h]
\centering 
\subfigure[The functional $\mathcal{V}^2$]{
\includegraphics[width=0.3\linewidth]{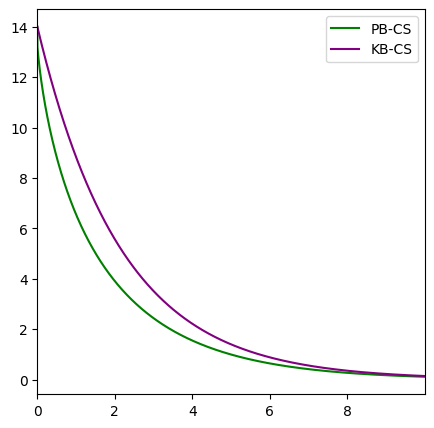} }
\subfigure[The functional $\mathcal{E}^2$]{
\includegraphics[width=0.3\linewidth]{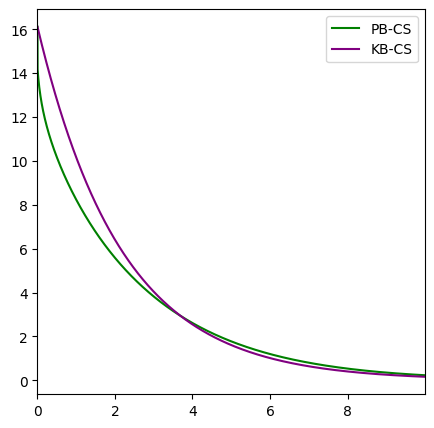} }
\caption{The time evolution of functionals $\mathcal{V}^2$ and $\mathcal{E}^2$ with time-step $=0.001$. The $y$-axis of (a) represents the value $\mathcal{V}^2(t)$ at time $t$ while the $y$-axis of (b) represents $\mathcal{E}^2(t)$.} 
\label{Case1-3}
\end{figure}

\newpage
\noindent$\bullet$~\textbf{Case B}: In Figure \ref{Case2u}, we can see the evolution of velocity $\bu_1(t)$ in the PB-CS model and the KB-CS model for the initial velocity \eqref{E6.2.5}.
This shows that the velocity $|\bu(t)|$ of the KB-CS model \eqref{KCS} can increase initially for a specific choice of the interaction matrix \eqref{E6.2.3}. This behavior is different from the case of uniform constant interaction weight \eqref{E6.2.1} as expected in \eqref{KCSinc}.

\begin{figure}[h]
\centering 
\subfigure[Velocity $u_1$, $10000$-iteration]{
\includegraphics[width=0.4\linewidth]{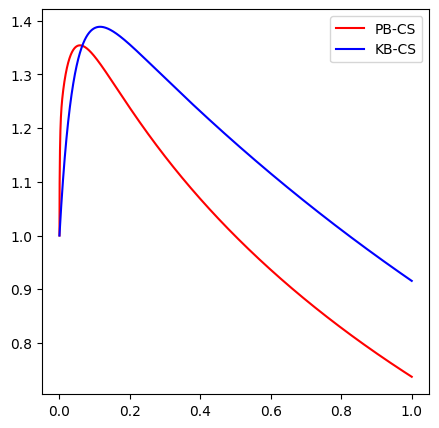} }
\caption{The time evolution of $\bu_1$ for the initial velocity \eqref{E6.2.5}.} 
\label{Case2u}
\end{figure}


In Figure \ref{Case2a} and \ref{Case2b}, we can see the evolution of the functional $\mathcal{V}^2$ and $\mathcal{E}^2$ of the PB-CS model and KB-CS model, for the two initial data \eqref{E6.2.4} and \eqref{E6.2.5}, respectively. For the KB-CS model, we can see the monotone decrease of $\mathcal{V}_{KBCS}^2$ and $\mathcal{E}_{KBCS}^2$ proved in Proposition \ref{P5.1}.
On the other hand, for the PB-CS model, the functional $\mathcal{V}^2_{PBCS}$ can initially increase for the specific initial data \eqref{E6.2.4} as Proposition \ref{P5.2} shows. 
\begin{figure}[h]
\centering 
\subfigure[The functional $\mathcal{V}^2$, $100$-iteration]{
\includegraphics[width=0.4\linewidth]{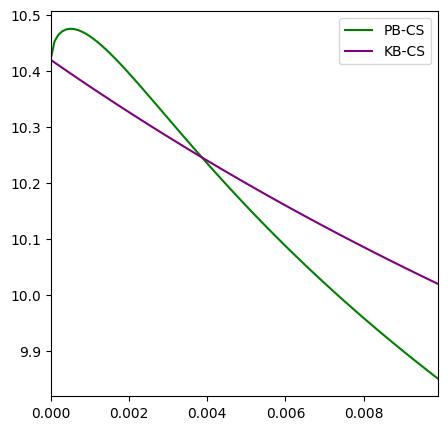} }
\subfigure[The functional $\mathcal{E}^2$, $100$-iteration]{
\includegraphics[width=0.4\linewidth]{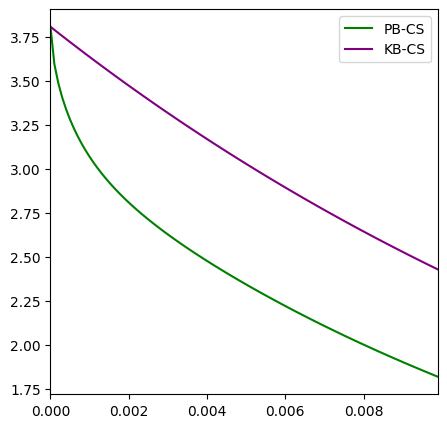} }
\caption{The time evolution of functionals $\mathcal{V}^2$, $\mathcal{E}^2$ having initial velocity \eqref{E6.2.4} with stepsize $=0.0001$.} 
\label{Case2a}
\end{figure}
For the initial data \eqref{E6.2.5}, we can see the initial increase of the functional $\mathcal{E}^2_{PBCS}$ in Figure \ref{Case2b} which describes Proposition \ref{P5.3}.
\begin{figure}[h]
\centering 
\subfigure[The functional $\mathcal{V}^2$, $500$-iteration]{
\includegraphics[width=0.4\linewidth]{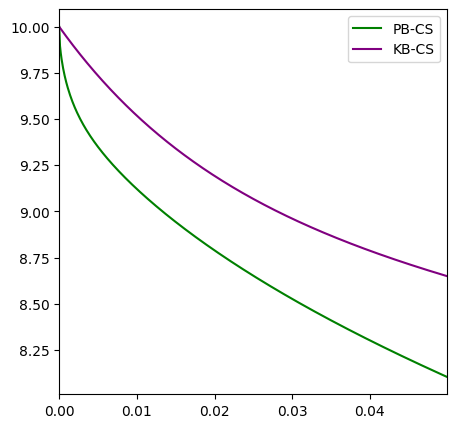} }
\subfigure[The functional $\mathcal{E}^2$, $500$-iteration]{
\includegraphics[width=0.4\linewidth]{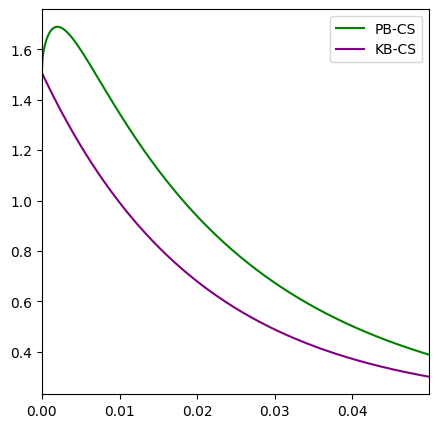} }
\caption{The time evolution of functionals $\mathcal{V}^2$, $\mathcal{E}^2$ having initial velocity \eqref{E6.2.5} with stepsize $=0.0001$.} 
\label{Case2b}
\end{figure}

\section{Conclusion} \label{sec:7}
\setcounter{equation}{0}
In this paper, we have studied production terms arising from the reduction of balance laws based on two theories, namely phenomenological macroscopic theory and kinetic theory for gas mixtures. In literature, the former has been known to satisfy an entropy principle, whereas it is not clear whether the latter satisfies the entropy principle or not. In this work, we showed that the production terms satisfy an entropy principle. We also adopt the reduction procedure employed for the derivation of the thermodynamic Cucker-Smale model to derive a new particle flocking model from the balance laws based on the kinetic theory for the mixture. We show that the kinetic theory-based particle model exhibits asymptotic flocking dynamics for all initial data without any restrictions on initial data. When initial data are close to some equilibrium state, we show that both models  satisfy asymptotic equivalence in velocity and energy, of course spatial positions can be made sufficiently small. There are several untouched issues for the new kinetic theory-based Cucker-Smale model. Kinetic and hydrodynamic descriptions for this new model have not been studied in literature. So it might be interesting problems to investigate the aforementioned problems in a future work. 

\section*{Acknowledgment}
The work of G.-C. Bae is supported by the National Research Foundation of Korea(NRF) grant funded by the Korea government(MSIT) (No. 2021R1C1C2094843), and 
the work of S.-Y. Ha was supported by National Research Foundation of Korea(NRF-2020R1A2C3A01003881). The work of T.~Ruggeri was carried out in the framework of the activities of the Italian National Group for Mathematical Physics [Gruppo Nazionale per la Fisica Matematica (GNFM/INdAM)].

\end{document}